\newcommand{\C}{\operatorname{\mathfrak{C}}}
\newcommand{\aut}[1]{\operatorname{Aut}_{#1}(\mathfrak{C})}
\newtheorem{same}{This should never appear}[section]
\newtheorem{defin}[same]{Definition}
\newtheorem{remark}[same]{Remark}
\newtheorem{theorem}[same]{Theorem}
\newtheorem{example}[same]{Example}
\newtheorem{lemma}[same]{Lemma}
\newtheorem{fact}[same]{Fact}
\newtheorem{question}[same]{Question}
\newtheorem{cor}[same]{Corollary}
\newtheorem{prop}[same]{Proposition}
\newtheorem{hypothesis}[same]{Hypothesis}
\newtheorem{nota}[same]{Notation}
\newtheorem{defin*}{Definition}
\newtheorem*{theorem*}{Theorem}
\newbox\noforkbox \newdimen\forklinewidth
\noforkbox\hbox{\lower 2pt\box1\lower 2pt\box0\relax}
\def\unionstick{\mathop{\copy\noforkbox}\limits}
\def\nonfork_#1{\unionstick_{\textstyle #1}}
\newbox\doesforkbox
\doesforkbox\hbox{\lower 2pt\box1 \lower 2pt\box2\lower2pt\box0\relax}
\def\nunionstick{\mathop{\copy\doesforkbox}\limits}
\def\fork_#1{\nunionstick_{\textstyle #1}}
\newcommand{\skipitems}[1]{%
  \addtocounter{\@enumctr}{#1}%
}
\newcommand{\ba}{\bold{a}}
\newcommand{\bb}{\bold{b}}
\newcommand{\rest}{\upharpoonright}
\newcommand{\id}{\textrm{id}}
\newcommand{\ce}{\operatorname{\mathfrak{C}}}
\newcommand{\dnf}{\unionstick}
\newcommand{\dnfb}{\overline{\dnf}}
\newcommand{\nes}{\overline{\dnf^{(nes)}}}
\newcommand{\K}{\mathbf{K}}
\newcommand{\LS}{\operatorname{LS}}
\newcommand{\leap}[1]{\le_{#1}}
\newcommand{\geap}[1]{\ge_{#1}}
\newcommand{\lea}{\leap{\K}}
\newcommand{\gea}{\geap{\K}}
\newcommand{\gtp}{\mathbf{tp}}
\newcommand{\gS}{\mathbf{S}}
\title{Simple-like independence relations in abstract elementary classes}
\date{\today.} 
\author{Rami Grossberg}
\email[Rami Grossberg]{rami@cmu.edu}
\urladdr{http://www.math.cmu.edu/$\sim$rami}
\address{Department of Mathematical Sciences\\
Carnegie Mellon University\\
Pittsburgh PA 15213, USA}
\author{Marcos Mazari-Armida}
\email{mmazaria@andrew.cmu.edu}
\urladdr{http://www.math.cmu.edu/~mmazaria/ }
\address{Department of Mathematical Sciences \\ Carnegie Mellon University \\ Pittsburgh PA 15213, USA}
\begin{document}

\maketitle

{\let\thefootnote\relax\footnote{{AMS 2010 Subject Classification: Primary 03C48. Secondary: 03C45, 03C55.
Key words and phrases. Abstract Elementary Classes; Stable independence; Simple independence; Tameness; Tree property.}}}  

\begin{abstract}
We introduce and study simple and supersimple independence relations in the context of AECs with a monster model.

\begin{theorem} Let $\K$ be an AEC with a monster model. 
\begin{itemize}
\item If $\K$ has a simple independence relation, then $\K$ does not have the $2$-tree property. 
\item If $\K$ has a simple independence relation with the $(<\aleph_0)$-witness property for singletons, then $\K$ does not have the tree property.
\end{itemize}
\end{theorem}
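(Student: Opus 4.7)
The plan is to mirror the classical argument that first-order simple theories do not have the tree property: convert a bad tree into a long chain of forking steps, then invoke the local character of $\dnf$. Both bullets share this skeleton and differ only in how one promotes sibling-inconsistency into forking.

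\textbf{First item.} Fix a local character cardinal $\chi$ for the simple relation $\dnf$, and assume toward contradiction that $\K$ has the 2-tree property. Exploiting the monster model and the homogeneity of the sibling data under the automorphism group of $\mathfrak{C}$, stretch the witnessing tree to height $\chi^+$, with nodes $\bar{a}_\eta$ for $\eta \in {}^{<\chi^+}\omega$. Pick a branch $\nu$ and a realization $\bar{b}$ of the branch type. The heart of the argument is to show that for cofinally many $\alpha < \chi^+$, the Galois type $\gtp(\bar{b}/\bar{a}_{\nu \upp (\alpha+1)})$ forks over $\bar{a}_{\nu \upp \alpha}$; iterating over $\chi^+$ levels contradicts local character. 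This key claim rests on a symmetry-and-extension argument: if $\bar{b}$ were $\dnf$-independent from $\bar{a}_{\nu \upp (\alpha+1)}$ over $\bar{a}_{\nu \upp \alpha}$, then for any sibling $\bar{a}_{\nu \upp \alpha \frown \langle i \rangle}$ with $i \neq \nu(\alpha)$, conjugacy over $\bar{a}_{\nu \upp \alpha}$ combined with extension of $\dnf$-free types would allow a single element to realize the branch through two distinct siblings simultaneously, contradicting 2-inconsistency.

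\textbf{Second item.} For the full tree property the obstruction is that $k$-inconsistency of siblings with $k>2$ does not immediately force forking; up to $k-1$ siblings may coexist above $\bar{a}_{\nu \upp \alpha}$. Here the $(<\aleph_0)$-witness property for singletons does the crucial work: it lets one test forking on the coordinates of $\bar{b}$ one singleton at a time. A pigeonhole/Ramsey step on \emph{which} singleton coordinate is responsible for the sibling-inconsistency at each level thins the tree to one in a single variable whose siblings are now 2-inconsistent, and Part~1 applies to yield the same contradiction with local character.

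\textbf{Main obstacle.} The delicate step is the translation from sibling-inconsistency into forking. In first-order logic this is Kim's lemma (forking equals dividing in simple theories), but in the abstract AEC setting there is no syntactic notion of dividing, so the step must be extracted from whatever axioms the paper places on a \emph{simple} independence relation — presumably local character, extension, symmetry, and some independence-theorem-type property. Isolating the minimal axioms needed here, and verifying that the $(<\aleph_0)$-witness property supplies precisely the extra leverage required for the $k>2$ case without demanding stationarity, is the main technical effort of the proof.
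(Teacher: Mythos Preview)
Your approach is fundamentally different from the paper's, and it has genuine gaps that I do not think can be repaired with the axioms available.

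The paper does not mimic the first-order forking-chain argument at all. Instead it introduces a counting function $NT(\mu,\lambda,\kappa)$ --- the supremum of the sizes of families of Galois types over $\le\mu$-submodels of a fixed $M\in\K_\lambda$ that are $\kappa$-inconsistent --- and proceeds in two steps. First, the tree property supplies a \emph{lower} bound: from a $(\mu,\lambda,2)$-tree (respectively a $(\mu,\lambda,k)$-tree) one extracts a large family of small types whose large subfamilies are inconsistent, so $NT(\mu,\lambda,2)\geq\lambda^\mu$ (respectively $NT(\mu,\lambda,(2^\mu)^+)\geq\lambda^+$). Second, a simple independence relation supplies an \emph{upper} bound: using Fodor plus repeated type-amalgamation (not any analogue of Kim's lemma) one proves a ``bound condition'' --- among $(2^\mu)^+$ many non-forking extensions of a fixed small type, $\aleph_0$ of them (or $(2^\mu)^+$ of them, under the witness property) admit a common realization. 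This caps $NT(\mu,\lambda,\aleph_0)$ (respectively $NT(\mu,\lambda,(2^\mu)^+)$) by $\lambda^{\kappa(\dnfb)}+2^\mu$. A careful choice of $\mu,\lambda$ and K\"onig's lemma give the contradiction. The role of the $({<}\aleph_0)$-witness property is solely to push the common-extension chain through limit ordinals, so that the bound condition reaches cardinality $(2^\mu)^+$ rather than just $\aleph_0$.

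Your proposal has two structural problems. First, in this paper's framework the base of $\dnfb$ must be a \emph{model} $M\lea\ce$, not a tuple; statements like ``$\gtp(\bar b/\bar a_{\nu\upharpoonright(\alpha+1)})$ forks over $\bar a_{\nu\upharpoonright\alpha}$'' are not well-formed. One can wrap the initial segments in small models, but then your conjugacy step fails: nothing in the definition of the tree property forces siblings to have the same Galois type over that wrapping model. Second, and more seriously, the sibling-inconsistency-to-forking step that you flag as the main obstacle really is one, and your sketch does not bridge it. Extension only gives a non-forking extension of a single type; it does not produce a simultaneous realization of two incompatible sibling types. Type-amalgamation would require the two \emph{domains} to be $\dnfb$-independent over the base, which is not available for sibling nodes. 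In first order this gap is closed by ``forking = dividing'', but no dividing-style statement is among the axioms here, and I do not see how to manufacture one from them. Finally, your use of the $({<}\aleph_0)$-witness property is off: ``for singletons'' refers to the \emph{length} of the left-hand tuple being $1$, and the property lets you test independence of a single $b$ against finite subsets of $N$. It does not decompose a tuple $\bar b$ into coordinates, so the pigeonhole-on-coordinates reduction to the $2$-tree case does not exist.
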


The proof of both facts is done by finding cardinal bounds to classes of small  Galois-types over a fixed model that are inconsistent for large subsets. We think that this finer way of counting types is an interesting notion in itself.

We characterize supersimple independence relations by finiteness of the Lascar rank under locality assumptions on the independence relation.

\end{abstract}

\tableofcontents

\section{Introduction}

\emph{Simple theories} were discovered by Shelah in the mid seventies, an early characterization from his 1978 book \cite{shbook} is  Theorem III.7.7.  Originally they were named \emph{theories without the tree property},   Shelah's first paper on them was published in 1980 \cite{sh93}. Simple theories were ignored  for more than a decade.  In 1991 Hrushovski circulated \cite{Hr} (which was published in 2002), there he discovered that the first-order theory of an ultraproduct of finite fields while unstable is simple in the sense of Shelah and established an early version of the \emph{type-amalgamation theorem} (also known as the independence theorem). This work was extended later by Chatzidakis and Hrushovski in the mid nineties, eventually published as \cite{ChHr}. Influenced by these papers, Kim  in \cite{kim0} and with Pillay in \cite{kipi} managed to adapt the type-amalgamation theorem from the algebraic context to complete first-order theories and solved a technical difficulty Shelah had with forking. We recommend \cite{gil} for some of the basic results, history (approved by Shelah) as well as some technical simplifications and the chain condition. The subject of simple theories and more generally studying various variants of forking-like relations for unstable first-order theories got much attention in the last 20 years as witnessed by three books dedicated to the subject: \cite{wag}, \cite{cas11}, and \cite{kim}.

In 1976 and 1977 Shelah circulated preprints of \cite{sh87a}, \cite{sh87b} and \cite{sh88} starting the far reaching program of extending his classification theory of first-order theories to several non-elementary classes. First classes axiomatizable by a theory in $L_{\omega_1,\omega}(\mathbf Q)$ and later to the more general syntax-free context of \emph{Abstract Elementary Classes (AECs for short)}. An elementary introduction to the theory of AECs can be found in \cite{grossberg2002}. A more in depth introduction is the two volume book by Shelah \cite{shelahaecbook}. Another  book is Baldwin's \cite{baldwinbook09}. For many years Shelah was the only person who managed to make progress in the field.  Much of the early work was motivated by Shelah's categoricity conjecture (a generalization of Morley's categoricity theorem). Naturally the work was closely related to generalizing first-order $\aleph_0$-stability and superstability.

There is a very extensive literature about attempts to develop analogues to $\aleph_0$-stability, superstability and stability for various classes of AECs.  Always under some extra assumptions on the AEC. This massive effort occupies  thousands of pages and is impossible to summarize in this paper. A start can be found in the above mentioned books by Baldwin and Shelah, however in the last decade much was added. See in particular in the PhD theses of Boney \cite{bonPhD} and Vasey \cite{vasPhD}. 

The goal of this paper is to begin exploring analogues of simplicity in the context of AECs. A-priori it is unclear that there is a natural property (for AECs) that correspond directly to simplicity.  It is plausible that there are several such properties. We introduce \emph{simple} and \emph{supersimple} independence relations. The main difference between stable independence relations and the relations that we introduce is that we do not assume uniqueness of non-forking extensions and instead assume the type-amalgamation property. Although this may seem like a minor change, based on our knowledge of forking in first-order theories this is actually a significant one.

 Simplicity in first-order theories can be approached from several points of view: using ranks, tree-property, axiomatic properties of forking (or independence properties in general), and counting families of types.  In this paper we too approach simplicity-like properties of AECs from various different directions.

We introduce the function $NT(\mu, \lambda, \kappa)$ to connect the existence of a simple-like independence relation with structural properties of the AEC. Our function generalizes $NT(\mu, \lambda)$ of \cite{cas}. The function $NT(\mu, \lambda, \kappa)$ assigns to each $\mu \leq \lambda$ and $\kappa$  cardinals the supremum of $|\Gamma|$ such that $\Gamma$ is a subset of Galois-types over models of size less than $\mu$ which are contained in a fixed model of size $\lambda$ and such that any subset of $\Gamma$ of cardinality greater than $\kappa$ is inconsistent. Intuitively this function let us count types in a finer way than just calculating the number of types over a fixed model.

We find the following bounds for the different kinds of independence relations studied in this paper.

\textbf{Theorem. } Let $\K$ be an AEC with a monster model.
\begin{enumerate}
\item (Theorem \ref{tp-st}) If $\dnfb$ is a stable independence relation, then \[ NT(\mu, \lambda, \kappa) \leq \lambda^{\kappa_1(\dnfb)} + \kappa^{-}. \]
\item (Theorem \ref{bsimple}) If $\dnfb$ is a simple independence relation, $\kappa(\dnfb)  \leq \mu \leq \lambda$ and $\mu^{<\ell(\dnfb)}=\mu$, then \[ NT(\mu, \lambda, \aleph_0) \leq \lambda^{\kappa(\dnfb) } + 2^{\mu}. \]
\item (Theorem \ref{bound_cond2}, \ref{supersim})
If $\dnfb$ is a simple independence relation with the $(<\aleph_0)$-witness property for singletons or a supersimple independence relation, $\kappa(\dnfb)  \leq \mu \leq \lambda$ and $\mu^{<\ell(\dnfb)}=\mu$, then \[ NT(\mu, \lambda, (2^\mu)^+) \leq \lambda^{\kappa(\dnfb)} + 2^{\mu}. \]
\end{enumerate}

We show that these bounds are useful as they imply that the AEC is stable or the failure of the tree property. The extension of the tree property to AECs is another of the contributions of the paper and is the based on the the idea that small types play the role of formulas (see Definition \ref{tree}).

\textbf{Corollary.}  Let $\K$ be an AEC with a monster model.
\begin{enumerate}
\item (Corollaries \ref{st-st}, \ref{ntp})  If $\dnfb$ is a stable independence relation independence relation, then $\K$ is stable and does not have the tree property.
\item (Corollary \ref{n2-tp}) If $\dnfb$ is a simple independence relation, then $\K$ does not have the $2$-tree property.
\item (Corollaries \ref{bound_cond3}, \ref{supersim}) If $\dnfb$ is a simple independence relation with the $(<\aleph_0)$-witness property for singletons or a supersimple independence relation, then $\K$ does not have the tree property.

\end{enumerate}

Moreover, using similar ideas to those used to prove the previous corollary, we obtain a new characterization of stable first-order theories assuming simplicity. We show that if first-order non-forking is contained in non-splitting and $T$ is simple, then $T$ is stable (Lemma \ref{star-equiv}).

In a different direction, we characterize supersimple independence relations via the Lascar rank (extended to AECs in \cite{bogr}) under the $(<\aleph_0)$-witness property for singletons. This extends \cite[2.5.16]{kim} to the AEC context.

 \textbf{Theorem \ref{equivsup}.} Assume $\K$ has a monster model. Let $\dnfb$ be a simple independence relation with the $(<\aleph_0)$-witness property for singletons. The following are equivalent.
\begin{enumerate}
\item $\dnfb$ is a supersimple independence relation.
\item If $M \in \K$ and $p\in \gS(M)$, then $U(p) < \infty$.
\end{enumerate}

A natural question whenever encountering work in pure model theory is about applications.  In this paper we do not deal with applications, we believe that it is premature to focus in applications as even for first-order simple theories the first significant applications were found more than 15 years after the basic results were discovered. Only recently some early applications were discovered of the much better understood theory of stable and superstable AECs.  For this we refer the interested reader to recent results of the second author on classes of modules, among them:  \cite{KuMaz}, \cite{m2}, \cite{m4}, \cite{m3}, and \cite{maz2}.

It is worth mentioning that there have been some efforts to extend the notion of simplicity to non-elementary settings.  Buechler and Lessman introduced a notion of simplicity for a strongly homogeneous structure in \cite{bule}, Ben-Yaccov introduced a notion of simplicity for compact abstract theories in \cite{ben},  Hyttinen and Kes\"{a}l\"{a} introduced a notion of simplicity for $\aleph_0$-stable finitary AECs with disjoint amalgamation and a prime model in \cite{hk}, and Shelah and Vasey introduced a notion of supersimplicity for $\aleph_0$-nicely stable AECs in \cite{shvas}.  One major difference between our context and that of  \cite{bule}  is that in their context types can be identified with sets of first-order formulas. As for \cite{ben}, types in his setting have a strong finitary character built in. While in our context types are orbits of the monster model $\C$ under the action of $\aut A$. As for \cite{hk} and \cite{shvas}, a major difference is that we do not assume any trace of stability.

On March 3rd, 2020, two days before posting this paper in the arXiv,  Kamsma paper \cite{kam} was posted in the arXiv. In it, he  introduced simple independence relations in AECats. Our papers study different aspects of simplicity in non-elementary classes. An important difference is that simple independence relations in his sense have finite character (called \emph{union} in his paper), while in ours they do not have it. Kamsma answers partially Question \ref{q-cano} of this paper (see Remark \ref{kams}).

The paper is organized as follows. Section 2 presents necessary background. Section 3 introduces the function $NT(\text{-}, \text{-}, \text{-})$, which is the main technical device of the paper, and a tree property. Section 4 deals with stable independence relations, a bound for $NT(\mu, \lambda, \kappa)$ is found, and it is shown that it implies stability and the failure of the tree property. We also study the consequences of weakening the uniqueness property by inclusion of the relation in explicitly non-splitting. Section 5 introduces simple independence relations, a bound for $NT(\mu, \lambda, \aleph_0)$ is found and it is shown that it implies the failure of the $2$-tree property. Section 6 studies simple independence relations with locality assumptions. A bound for $NT(\mu, \lambda, (2^\mu)^+)$ is found and it is shown that it implies the failure of the tree property. Section 7 introduces supersimple independence relations and characterizes them by the Lascar rank. It is also shown that the existence of a supersimple independence relation in a class that admits intersections implies the $(<\aleph_0)$-witness property for singletons.

This paper was written while the second author was working on a Ph.D. under the direction of the first author at Carnegie Mellon University and the second author would like to thank the first author for his guidance and assistance in his research in general and in this work in particular. We thank Hanif Cheung for helpful conversations. We would also like to thank Mark Kamsma, Samson Leung, Sebastien Vasey, and a couple of referees for comments that helped improve the paper.

\section{Preliminaries}
We assume the reader has some familiarity with abstract elementary classes as presented for example in \cite[\S 4 - 8]{baldwinbook09} and  \cite[\S 2, \S 4.4]{ramibook}. Familiarity with \cite{lrv} would be useful, but it is not required as we will recall the notions from \cite{lrv} that are used in this paper. We begin by quickly introducing the basic notions of AECs that we will use in this paper.

 Since the main results of the paper assume joint embedding, amalgamation and no maximal models, we will assume these since the beginning.\footnote{Some of the definitions presented here make sense without these hypothesis.}

\begin{hypothesis}\label{hyp}
Let $\K$ be an AEC with joint embedding, amalgamation and no maximal models.
\end{hypothesis}

\subsection{Basic concepts} We begin by introducing some model theoretic notation.

\begin{nota}\
\begin{itemize}
\item If $M \in \K$, $|M|$ is the underlying set of $M$ and $\| M \|$ is the cardinality of $M$.
\item If $\lambda$ is a cardinal, $\K_{\lambda} =\{ M \in \K : \| M \| =\lambda \}$ and $\K_{< \lambda} = \{ M \in \K : \| M \| <\lambda \}$.
\item If $M \in \K$ and $\lambda \leq \| M \|$, $[M]^\lambda=\{ N : N \lea M \} \cap \K_\lambda$ and $[M]^{< \lambda}=\{ N : N \lea M \} \cap \K_{< \lambda}$.
\item Let $M, N \in \K$. If we write ``$f: M \to N$" we assume that $f$ is a $\K$-embedding, i.e., $f: M \cong f[M]$ and $f[M] \lea N$. 

\end{itemize}
\end{nota}

We will also use the next set theoretic notation.
\begin{nota}\
\begin{itemize}
\item For $\kappa$ a cardinal, we define $\kappa^{-}=\theta$ if $\kappa=\theta^+$ and $\kappa^{-}=\kappa$ otherwise.
\item For $\kappa$ a cardinal and $\kappa \leq |A|$, let $\mathcal{P}_{< \kappa}(A)=\{ B \subseteq A : | B | < \kappa \}$.
\end{itemize}
\end{nota}

Recall the following definitions due to Shelah.

\begin{defin} Let $M \in \K$.
\begin{enumerate}
\item$M$ is $\lambda$-universal if for every $N \in \K_{< \lambda}$, there exists $f: N \to M$.
\item $M$ is $\lambda$-model homogeneous if for every $M_0 \lea N_0$ both in $\K_{< \lambda}$, if $M_0 \lea M$ then there exists $f: N_0 \xrightarrow[M_0]{} M$.
\end{enumerate}

\end{defin}

\begin{remark}
Since $\K$ has joint embedding, amalgamation and no maximal models, we work inside a monster model $\ce$ (as in complete first-order theories). A monster model $\ce$ is large compared to all the models we consider and is universal and model homogeneous for small cardinals. As usual, we assume that all the elements and sets we consider are contained in the monster model $\ce$. Further details are given in \cite[\S 7]{notesv}.
\end{remark}

Shelah introduced a notion of semantic type in \cite{sh300}. The
original definition was refined and extended by many authors who
following \cite{grossberg2002} call these semantic types Galois-types (Shelah recently named them orbital
types). We present here the modern definition and call them Galois-types
throughout the text.  We use the terminology of \cite[2.5]{mv} and introduce Galois-types without using the monster model.

\begin{defin}\label{gtp-def} \
  \begin{enumerate}
    \item Let $\K^3$ be the set of triples of the form $(\bb, A, N)$, where $N \in \K$, $A \subseteq |N|$, and $\bb$ is a sequence of elements from $N$. 
    \item For $(\bb_1, A_1, N_1), (\bb_2, A_2, N_2) \in \K^3$, we say $(\bb_1, A_1, N_1)E (\bb_2, A_2, N_2)$ if $A := A_1 = A_2$, and there exists $f_\ell : N_\ell \xrightarrow[A]{} N$ such that $f_1 (\bb_1) = f_2 (\bb_2)$.
    \item Note that $E$ is an equivalence relation on $\K^3$. It is transitive because $\K$ has amalgamation.
    \item For $(\bb, A, N) \in \K^3$, let $\gtp_{\K} (\bb / A; N) := [(\bb, A, N)]_E$. We call such an equivalence class a \emph{Galois-type}. If $N = \ce$ (where $\ce$ is a monster model) we write $\gtp(\ba /A)$ instead of $\gtp(\ba/A ; \ce)$.
\item For $N \in \K$, $A \subseteq N$ and $I$ a non-empty set, $\gS^I(A ;N)= \{ \gtp(\bb/ A; N) : \bb =  \langle b_i \in N :  i \in I \rangle \}$. Let $\gS(M):= \gS^1(M)$ and $\gS^{<\infty}(M):= \bigcup_{\alpha < \infty} \gS^\alpha(M)$. 
\item An AEC is \emph{$\lambda$-Galois-stable} if for any $M \in \K_\lambda$ it holds that $| \gS(M) | \leq \lambda$. An AEC is \emph{stable} if there is $\lambda \geq \LS(\K)$ such that $\K$ is $\lambda$-Galois-stable.
\item For $p= \gtp_{\K} ((b_i)_{i \in I} / A; N) \in \gS^I(A; N) $, $A' \subseteq A$ and $I_0 \subseteq I$, $p^{I_0}\rest_{A'}:= [((b_i)_{i\in I_0}, A', N)]_E$.
  \end{enumerate}
\end{defin}

The following fact shows that in the presence of a monster model, the Galois-type of $\bb$ over a set $A$ is simply the orbit of $\bb$ under the action of the automorphisms of $\ce$ fixing $A$. 
\begin{fact}
 $\gtp(\bb_1 /A; \ce)= \gtp(\bb_2/A; \ce)$ if and only if there exists $f\in Aut_A(\ce)$ with $f(\bb_1)=\bb_2$.
\end{fact}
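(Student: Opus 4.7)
The plan is to handle the two directions separately, with the backward implication being essentially by inspection. If $f\in\aut{A}$ satisfies $f(\bb_1)=\bb_2$, then setting $N:=\ce$, $f_1:=f$, and $f_2:=\id_\ce$ yields maps $f_i\colon \ce\xrightarrow[A]{}\ce$ with $f_1(\bb_1)=\bb_2=f_2(\bb_2)$, witnessing $(\bb_1,A,\ce)\,E\,(\bb_2,A,\ce)$ so that the two Galois-types coincide.

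For the forward direction, suppose the two types are equal. Unpacking Definition~\ref{gtp-def}, obtain $N\in\K$ and $\K$-embeddings $f_1,f_2\colon\ce\xrightarrow[A]{}N$ with $f_1(\bb_1)=f_2(\bb_2)$. The key is to convert this external witness into an internal automorphism of $\ce$. First, by Löwenheim--Skolem, pick a small $M\lea\ce$ containing $A\cup\bb_1\cup\bb_2$ and a small $N_0\lea N$ containing $f_1[M]\cup f_2[M]$. The map $f_1^{-1}\rest f_1[M]\colon f_1[M]\to M$ is a $\K$-isomorphism between strong submodels of $N_0$ and $\ce$, both small, so by model homogeneity of the monster it extends to a $\K$-embedding $\psi\colon N_0\to\ce$.

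Set $g:=\psi\circ(f_2\rest M)\colon M\to\ce$. For $a\in A$, compute $g(a)=\psi(f_2(a))=\psi(a)=\psi(f_1(a))=a$, so $g$ fixes $A$ pointwise. Likewise $g(\bb_2)=\psi(f_2(\bb_2))=\psi(f_1(\bb_1))=\bb_1$. Consequently $g^{-1}\colon g[M]\to M$ is a partial $\K$-isomorphism between small strong submodels of $\ce$ that fixes $A$ and sends $\bb_1$ to $\bb_2$. A standard back-and-forth argument, using model homogeneity of $\ce$ at each stage, extends this to some $h\in\aut{A}$ with $h(\bb_1)=\bb_2$.

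The one subtlety worth flagging is purely a size issue: the amalgam $N$ produced by the $E$-relation may a priori be too large for $\ce$'s model homogeneity to apply directly to it, which is exactly why we first pass to the small reducts $M$ and $N_0$ before inverting $f_1$. Once the partial isomorphism $g^{-1}$ on a small submodel is in hand, its promotion to a global automorphism is the routine back-and-forth built into the monster model hypothesis.
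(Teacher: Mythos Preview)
Your proof is correct. The paper itself states this result as a bare \emph{Fact} with no proof given, treating it as a standard and well-known property of monster models (indeed it refers the reader to \cite[\S 7]{notesv} for details on the monster-model machinery). Your argument is the expected one: trivially unwind the $E$-relation for the backward direction, and for the forward direction pass to small submodels so that model homogeneity of $\ce$ can be invoked, then promote the resulting partial isomorphism to an automorphism via back-and-forth. The size caveat you flag---that the amalgam $N$ witnessing $E$ may be larger than $\ce$---is exactly the right thing to be careful about, and your reduction to small $M\lea\ce$ and $N_0\lea N$ handles it cleanly. One implicit assumption you are using (and which the paper also makes tacitly) is that the base set $A$ is small relative to the homogeneity of $\ce$; this is part of the standing convention when working inside a monster.
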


The notion of tameness was isolated by the first author and VanDieren in \cite{tamenessone} and type-shortness by Boney in \cite{bont}.

\begin{defin} \
\begin{itemize}
\item $\K$ is \emph{$(< \kappa)$-tame} for $\theta$-types if for any $M \in \K$ and $p \neq q \in \gS^I(M)$ with $| I |=\theta$,  there is $A \in \mathcal{P}_{<\kappa}( M)$ such that $p\upharpoonright_{A} \neq q\upharpoonright_{A}$. 
\item $\K$ is \emph{$\kappa$-tame} for $\theta$-types if it is \emph{$(< \kappa^+)$-tame} for $\theta$-types.
\item $\K$ is \emph{fully $(<\kappa)$-tame} if for every $\theta$ ordinal, $\K$ is $(< \kappa)$-tame for $\theta$-types.
\item $\K$ is \emph{fully $(<\kappa)$-tame and -type-short} if for any $M \in \K$ and $p \neq q \in \gS^I(M)$, there is $A\in \mathcal{P}_{<\kappa}( M)$ and $I_0 \in \mathcal{P}_{<\kappa}( I)$ such that  $p^{I_0}\rest_A \neq q^{I_0}\rest_A$.
\end{itemize}
\end{defin}

\subsection{Independence relations and the witness property} Global independence relations in the context of AECs and $\mu$-AECs have been extensively studied in the last few years, see for example \cite{bogr}, \cite{vasey16}, and \cite{lrv}. Below we introduce a weak independence notion. Our notation and choice of axioms is inspired by \cite{lrv} and the specific independence relations that we will study in this paper.

\begin{defin}
$\dnfb$ is an \emph{independence relation} in an AEC $\K$ if the following properties hold:
\begin{enumerate}
\item $\dnfb \subseteq \{(M, A, B) : M\lea \ce \text{ and } A, B \subseteq \ce \}$. We say that $\gtp(\bar{a}/B)$ does not fork over $M$ if $ran(\bar{a}) \dnfb_{M}  B$. This is well-defined by the next three properties.
\item (Preservation under $\K$-embeddings) Given $M \lea \ce$, $A, B \subseteq \ce$ and $f \in Aut(\ce)$, we have that $A \dnfb_{M}  B$ if and only if $f[A] \dnfb_{f[M]} f[B]$.
\item (Monotonicity) If $A \dnfb_{M}  B$ and $A_0 \subseteq A$, $B_0 \subseteq  B$, then $A_0 \dnfb_{M}  B_0$.
\item (Normality) $A \dnfb_{M}  B$ if and only if $A\cup M \dnfb_{M}  B\cup M$.
\item (Base monotonicity) If $A \dnfb_{M}  B$, $M \lea N \lea \ce$ and $|N| \subseteq B$, then $A \dnfb_{N}  B$.
\item (Existence) If $M\lea N$ and $p \in \gS^{<\infty}(M)$, then there exists $q\in \gS^{< \infty}(N)$ extending $p$ such that $q$ does not fork over $M$.
\item (Transitivity) If $M \lea N$, $A \dnfb_{M}  N$ and $A \dnfb_{N}  B$, then $A \dnfb_{M}  B$.

\end{enumerate}
\end{defin}

Let us introduce some notation.

\begin{nota} Given $\dnfb$ an independence relation:
\begin{itemize}
\item For $\alpha$ a cardinal, let $\kappa_\alpha(\dnfb)$ be the minimum $\lambda$ (or $\infty$) such that: If $p \in \gS^{\alpha}(M)$, then there exists $M_0 \lea M$ with $|| M_0 || \leq \lambda$ and $p$ does not fork over $M_0$.   
\item  Let $(\kappa(\dnfb), \ell(\dnfb))$ be the minimum pair $(\lambda, \theta)$ of cardinals\footnote{$\lambda$ is an infinite cardinal, but $\theta$ might be a finite cardinal. The minimum is taken with respect to the canonical ordering in pairs of ordinals.} (or $(\infty, \infty)$) such that: If $p\in \gS^\alpha(M)$, there exists $M_0 \in \K$ with $M_0 \lea M$, $\| M_0 \| \leq \lambda + \alpha^{<\theta}$ and $p$ does not fork over $M_0$. 
\end{itemize}
\end{nota}





The following notion is a  locality notion for independence relations.

\begin{defin}[{\cite[3.12.(9)]{vasey16}}] \label{wp}  Let $\dnfb$ be an independence relation.
$\dnfb$ has the \emph{$(<\theta)$-witness property} of length $\alpha$ if for all $M \lea N$ and $\bb \in \ce^{\alpha}$: $\bb \dnfb_M  N$ if and only if $\bb \dnfb_M A$ for every  $ A \in \mathcal{P}_{<\theta}( N)$. We say that $\dnfb$ has the $(<\theta)$-witness property if and only if $\dnfb$ has the $(<\theta)$-witness property of length $\alpha$  for all $\alpha$. 
\end{defin}

Observe that since first-order non-forking has finite character, first-order non-forking has the $(<\aleph_0)$-witness property. This might not be the case for independence relations as the next example shows. This example was first considered in \cite[1.43]{adler}.

\begin{example}
Let $L(\K)=\emptyset$ and $\K = (Sets, \subseteqq)$. Given $M, A, B \in \K$ let:
\[ A \dnfb_M B \text{ if and only if } | (A\cap B) \backslash M | \leq \aleph_0 \]

It is easy to show that $\dnfb$ is an independence relation. $\dnfb$ has the $(<\aleph_0)$-witness property of length $\alpha$ for $\alpha$ countable, but not for $\alpha$ uncountable. Hence $\dnfb$ does not have the  $(<\aleph_0)$-witness property.
\end{example}

In a few places in the paper we will assume that the independence relation under consideration has the witness property in order to be able to carry out some of the proofs (see for example Lemma \ref{bound_cond} and Theorem \ref{equivsup}).

The next lemma gives a natural condition that implies the witness property. It fixes a small gap in \cite[4.3]{vasey16}; the argument in \cite[4.3]{vasey16} seems to only work for $M$ of cardinality less than or equal to $\kappa_\alpha(\dnfb)$ as we need $M \lea N$ in order to apply transitivity.

\begin{lemma}\label{lcwitness}
Let $\dnfb$ be an independence relation. If $\kappa_\alpha(\dnfb)=\lambda$, then $\dnfb$ has the $(<\lambda^+)$-witness property of length $\alpha$.
\end{lemma}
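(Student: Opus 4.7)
The forward direction is immediate from monotonicity, so the interesting content is the converse: assuming $\bb \dnfb_M A$ for every $A \in \mathcal{P}_{<\lambda^+}(N)$, I need to deduce $\bb \dnfb_M N$. The plan is to ``descend'' $M$ to a small submodel $M_0$ of cardinality at most $\lambda$ where transitivity can be chained with the witness hypothesis, carry out the argument there, and ``ascend'' back to $M$ at the very end by a single application of base monotonicity.

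The first step is to apply $\kappa_\alpha(\dnfb) = \lambda$ twice. Applied to $\gtp(\bb/M) \in \gS^\alpha(M)$ it produces $M_0 \lea M$ with $\|M_0\| \leq \lambda$ and $\bb \dnfb_{M_0} M$; applied to $\gtp(\bb/N) \in \gS^\alpha(N)$ it produces $N_0 \lea N$ with $\|N_0\| \leq \lambda$ and $\bb \dnfb_{N_0} N$. Using Löwenheim–Skolem inside $N$, I then pick $N_0' \lea N$ of cardinality at most $\lambda$ whose underlying set contains $|M_0|\cup|N_0|$; coherence upgrades both $M_0 \lea N$ and $N_0 \lea N$ to $M_0 \lea N_0'$ and $N_0 \lea N_0'$. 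Base monotonicity upgrades $\bb \dnfb_{N_0} N$ to $\bb \dnfb_{N_0'} N$, while the witness hypothesis, applied to $A := |N_0'|$ (a subset of $|N|$ of size at most $\lambda$), gives $\bb \dnfb_M N_0'$.

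The finish consists of two successive transitivities followed by one base monotonicity. First, using $M_0 \lea M$, transitivity of $\bb \dnfb_{M_0} M$ with $\bb \dnfb_M N_0'$ gives $\bb \dnfb_{M_0} N_0'$. Second, using $M_0 \lea N_0'$, transitivity of $\bb \dnfb_{M_0} N_0'$ with $\bb \dnfb_{N_0'} N$ gives $\bb \dnfb_{M_0} N$. Finally, because $M_0 \lea M \lea \ce$ and $|M| \subseteq |N|$, base monotonicity returns $\bb \dnfb_M N$, as required.

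The main subtlety, and precisely the gap in the earlier argument, is that the naive strategy tries to absorb $M$ into a small model $N_0$ containing it and then invokes transitivity along $M \lea N_0$ — a move that is forced to fail as soon as $\|M\|>\lambda$, since $\|N_0\|$ must be at most $\lambda$. The workaround above sidesteps this by doing all of the transitivity work with a small base $M_0$ produced by $\kappa_\alpha$ applied to $\gtp(\bb/M)$, and only at the very end traveling back to $M$ via base monotonicity, which moves the base upward rather than downward and therefore has no size constraint.
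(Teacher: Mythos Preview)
Your proof is correct and uses the same ingredients as the paper's---local character to find small models, the witness hypothesis on a small set, transitivity, and base monotonicity to climb back to $M$---but the organization differs. The paper splits into two cases on $\|M\|$: when $\|M\|\le\lambda$ it finds a single $N'\in[N]^{\lambda}$ with $M\lea N'$ and applies one transitivity along $M\lea N'$; when $\|M\|>\lambda$ it first descends to $M'\in[M]^{\lambda}$, verifies that the witness hypothesis transfers to base $M'$, invokes Case~1 with $M'$ in place of $M$, and then ascends by base monotonicity. Your argument is essentially the paper's Case~2 carried out unconditionally: you always descend to $M_0$ first, build $N_0'$ containing both $M_0$ and the $N_0$ coming from $\kappa_\alpha$ applied to $\gtp(\bb/N)$, chain two transitivities at the small base, and ascend once. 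This buys you a uniform proof with no case distinction, at the cost of invoking $\kappa_\alpha$ twice even when $M$ is already small; the paper's Case~1 is slightly more direct in that situation but needs the separate reduction step for large $M$.
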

\begin{proof}
The proof is divided into two cases:

\underline{Case 1:} Assume that $\| M \| \leq \lambda$. Let $M \lea N$ and $\ba \in \ce^\alpha$, by $\kappa_\alpha(\dnfb)=\lambda$ there is $N' \in [N]^\lambda$ such that $\ba \dnfb_{N'} N$. Since $\| M \| \leq \lambda$ and $M \lea N$, we may assume without lost of generality that $M \lea N'$. Moreover, as $N' \in \mathcal{P}_{\leq \lambda}(N)$, we have that  $\ba \dnfb_M N'$. Then by transitivity we conclude that $\ba \dnfb_ M N$.

\underline{Case 2:} Assume that $\| M \| > \lambda$.  Let $M \lea N$ and $\ba \in \ce^\alpha$. Since $\kappa_\alpha(\dnfb)=\lambda$ there is $M' \in [M]^\lambda$ such that $\ba \dnfb_{M'} M$. Using that $\forall B \in \mathcal{P}_{\leq \lambda}(N)(\ba \dnfb_M B)$ and transitivity, it follows that $\forall B \in \mathcal{P}_{\leq \lambda}(N)(\ba \dnfb_{M'} B)$. Then by the first case we have that $\ba \dnfb_{M'} N$. Hence $\ba \dnfb_M N$ by base monotonicity. \end{proof}

We will give a few other natural conditions that imply the witness property, see for example Fact \ref{lc3} and Corollary \ref{lc4}.

\section{The basic notions}

In this section we introduce a way of counting Galois-types over small submodels and generalize the tree property to AECs. We think that this finer way of counting types is an interesting notion in itself. As mentioned in the preliminaries we are assuming Hypothesis \ref{hyp}.

In this paper Galois-types over submodels will play a central role.

\begin{defin}\label{dNT} Let $M \in \K$ and $\mu \leq \| M\|$: 
\[ \gS(M, \leq\mu) = \{ \gtp(a/N) : N \lea M \text{ and } \| N \| \leq \mu \} \]
\end{defin}

\begin{defin}
Let $\Gamma$ be a set of Galois-types. $\Gamma$ is \emph{consistent} if there is $a \in \ce$ such that $a$ realizes every Galois-type in $\Gamma$, i.e., $\gtp(a/dom(p))=p$ for each $p \in \Gamma$. If such an $a \in \ce$ does not exist we say that $\Gamma$ is \emph{inconsistent}.
\end{defin}

The following notion generalizes \cite[2.3]{cas} to the AEC setting. 

\begin{defin}
Let $\mu, \lambda \in  [\LS(\K), \infty)$ such that $\mu \leq \lambda$ and $\kappa$ a cardinal (possibly finite). We define the following:
\[ NT(\mu, \lambda, \kappa)=sup\{ |\Gamma| : \exists M \in \K_\lambda( \Gamma \subseteq \gS(M, \leq \mu) \text{ and } \forall \Delta \subseteq \Gamma( |\Delta| \geq \kappa \rightarrow \Delta \text{ is inconsistent}) )\} \] 

If $\kappa =2$  instead of writing $NT(\mu, \lambda, 2)$, we write $NT(\mu, \lambda)$ as in \cite{cas}.\footnote{The definition given here does not fully match the definition of \cite{cas} when $\K=(Mod(T), \preceq)$ for a complete first-order theory $T$, since the bound $\mu$ on \cite{cas} refers to the cardinality of the type (the number of formulas in it) while in our definition it refers to the cardinality of the domain of the type.}
\end{defin}

The following bounds are easy to calculate and hold in general. In what follows, see Theorems \ref{tp-st}, \ref{star-bound}, \ref{bsimple} and \ref{bound_cond2}, we will find sharper bounds which will be the key to show stability or the failure of the tree property under additional hypothesis.

\begin{prop}\label{first-b}\

\begin{enumerate}
\item If $M \in \K_\lambda$, then $| \gS(M) |\leq NT(\lambda, \lambda, 2)$.
\item If $\mu_1 \leq \mu_2$, $\lambda_1 \leq \lambda_2$ and $\kappa_1 \leq \kappa_2$ then $NT(\mu_1, \lambda_1, \kappa_1) \leq NT(\mu_2, \lambda_2, \kappa_2)$.
\item If $\mu \leq \lambda$, then the value of  $NT(\mu, \lambda, \text{-})$ is bounded as follows:
\begin{enumerate}
\item If $\kappa \in [2, (\lambda^\mu)^+]$, then $NT(\mu, \lambda, \kappa) \leq \lambda^\mu$.
\item If $\kappa \in ((\lambda^\mu)^+, (2^\lambda)^+]$, then $NT(\mu, \lambda, \kappa) \leq 2^\lambda$.
\item If $\kappa \in ((2^\lambda)^+, 2^{\lambda^\mu}]$, then $NT(\mu, \lambda, \kappa) \leq 2^{\lambda^\mu}$.
\end{enumerate}

\item  $\K$ is $\lambda$-Galois-stable if and only if $NT(\mu, \lambda, \kappa) \leq \lambda$ for every $\mu \in [\LS(\K), \lambda]$ and $\kappa\in [2, \lambda^+]$. 

\end{enumerate}
\end{prop}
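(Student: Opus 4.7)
The strategy is to establish the four parts by combining two ingredients: the standard AEC cardinality bound $|\gS(N)| \leq 2^{\|N\|+\LS(\K)}$ for $N \in \K$, and a counting lemma asserting that for any $\Gamma \subseteq \gS(M,\leq\mu)$ witnessing the $\kappa$-inconsistency condition over $M$, $|\Gamma| \leq |\gS(M)| \cdot \kappa^-$. To prove the counting lemma I would fix, for each $p \in \Gamma$, a realization $a_p \in \ce$, define $\phi \colon \Gamma \to \gS(M)$ by $\phi(p) := \gtp(a_p/M)$, and observe that since $p$ is the restriction of $\phi(p)$ to the submodel $N_p \lea M$ over which $p$ lives, any realization of a fixed $q \in \gS(M)$ realizes every $p \in \phi^{-1}(q)$. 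Hence each fiber is consistent and, by the inconsistency hypothesis, has size $<\kappa$.

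For (1) I would take $\Gamma := \gS(M)$: because $M \in [M]^{\leq\lambda}$ we have $\Gamma \subseteq \gS(M,\leq\lambda)$, and two distinct Galois-types over $M$ admit no common realization. For (2), monotonicity in each coordinate is immediate: extend $M$ to $\K_{\lambda_2}$ by no maximal models to grow $\lambda$; observe $\gS(M,\leq\mu_1) \subseteq \gS(M,\leq\mu_2)$ to grow $\mu$; and note that a subset of size $\geq \kappa_2 \geq \kappa_1$ is in particular of size $\geq \kappa_1$, hence inconsistent.

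For (3), the cardinality bound yields $|\gS(M)| \leq 2^\lambda$ and
\[
|\gS(M,\leq\mu)| \;\leq\; |[M]^{\leq\mu}| \cdot \sup_{N \in [M]^{\leq\mu}} |\gS(N)| \;\leq\; \lambda^\mu \cdot 2^\mu \;=\; \lambda^\mu,
\]
using $\mu \geq \LS(\K)$. Then (a) follows from $|\Gamma| \leq |\gS(M,\leq\mu)| \leq \lambda^\mu$, while (b) and (c) follow from the counting lemma with $\kappa^- \leq 2^\lambda$ and $\kappa^- \leq 2^{\lambda^\mu}$ respectively. For (4), $(\Leftarrow)$ is immediate from (1) applied to any $M \in \K_\lambda$, and $(\Rightarrow)$ follows from the counting lemma: when $|\gS(M)| \leq \lambda$ and $\kappa \leq \lambda^+$, $|\Gamma| \leq \lambda \cdot \lambda = \lambda$.

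No step is genuinely delicate; the only external input is the standard cardinality bound $|\gS(N)| \leq 2^{\|N\|+\LS(\K)}$ for AECs with amalgamation, which I would simply invoke.
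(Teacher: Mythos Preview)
Your proof is correct and follows essentially the same underlying idea as the paper: extend each small type to a type over $M$ and observe that types extending a common $q \in \gS(M)$ are simultaneously realized. The paper carries this out by direct pigeonhole in each case, whereas you package it once as a counting lemma $|\Gamma| \leq |\gS(M)|\cdot\kappa^-$ and then read off (3)(b), (3)(c), and the forward direction of (4). Your treatment of (3)(a) is also slightly more direct: you bound $|\Gamma|$ by $|\gS(M,\leq\mu)| \leq \lambda^\mu\cdot 2^\mu = \lambda^\mu$ outright (which in fact gives the bound for all $\kappa$, not just $\kappa \leq (\lambda^\mu)^+$), while the paper pigeonholes first on the domain and then on the type over that domain. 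The content is the same; your organization is a bit cleaner.
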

\begin{proof}\
\begin{enumerate}
\item Let $\chi=| \gS(M)|$ and $\{ p_\alpha : \alpha < \chi \}$ an enumeration without repetitions of $\gS(M)$. Observe $ \{ p_\alpha : \alpha < \chi \} \subseteq \gS(M, \leq\lambda)$ and any set $\{p_\alpha, p_\beta\}$ is inconsistent if $\alpha \neq \beta$. Therefore, $|\gS(M) | = \chi \leq NT(\lambda, \lambda, 2)$. 

\item Follows from the fact that if  $\Gamma \subseteq \gS(M, \leq\mu_1)$ for $M \in \K_{\lambda_1}$ and each subset of size greater or equal to $\kappa_1$ is inconsistent, then there is $M^* \in \K_{\lambda_2}$ with $M \lea M^*$ and $\Gamma \subseteq \gS(M^*, \leq \mu_2)$ such that any subset of size greater or equal to $\kappa_2$ is inconsistent. 

\item
\begin{enumerate}
\item Let $\kappa \in  [2, (\lambda^\mu)^+]$, $\chi := \lambda^\mu$ and $\{p_\alpha : \alpha < \chi^+\} \subseteq \gS(M, \leq \mu)$ for $M \in \K_\lambda$. 

 Let $\Phi : \chi^+ \to [M]^{\leq \mu}$ be defined as $\Phi(\alpha) = dom(p_\alpha)$, since  $|[M]^{\leq \mu}|=\lambda^\mu$ by the pigeonhole principle there is $S \subseteq \chi^+$  of size $\chi^+$ and $N \in  [M]^{\leq \mu}$ such that $dom(p_\alpha)=N$ for each $\alpha \in S$. Let $\Psi: S \to \gS(N)$ be defined as $\Psi(\alpha)= p_\alpha$, since $|\gS(N)| \leq 2^{\mu}$ by the pigeonhole principle there is $S' \subseteq S$ of size $\chi^+$ and $q \in \gS(N)$ such that $p_\alpha = q$ for each $\alpha \in S'$. In particular $\{ p_\alpha : \alpha \in S'\}$ is a consistent set of size $\chi^+$. Hence $NT(\mu,\lambda,\kappa) \leq \lambda^\mu$.

\item Let $\kappa \in  ((\lambda^\mu)^+, (2^\lambda)^+]$, $\chi := 2^\lambda$ and $\{p_\alpha : \alpha < \chi^+\} \subseteq \gS(M, \leq \mu)$ for $M \in \K_\lambda$. 

Given $\alpha < \chi^+$, let $q_\alpha \in \gS(M)$ such that $q_\alpha \geq p_\alpha$, it exists because we assumed that $\K$ has amalgamation. Let $\Phi: \chi^+ \to \gS(M)$ be defined as $\Phi(\alpha)=q_\alpha$, since $|\gS(M)| \leq 2^\lambda$ by the pigeonhole principle there is $S \subseteq \chi^+$ of size $\chi^+$ and $q \in \gS(M)$ with $q_\alpha = q$ for every $\alpha \in S$. In particular $\{ p_\alpha : \alpha \in S'\}$ is a consistent set of size $\chi^+$. Hence $NT(\mu,\lambda,\kappa) \leq 2^\lambda$.

\item Similar to (b).
 
\end{enumerate}

\item The forward direction is similar to (3).(a) but using that for every $M\in \K_\lambda$ we have that $|\gS(M)| \leq \lambda$ instead of only $| \gS(M) | \leq 2^\lambda$. The backward direction follows from (1).



\end{enumerate}
\end{proof}

The next concept extends the tree property to the AEC context. The main idea is that Galois-types over \emph{small sets} in AECs play a similar role as that of formulas in first-order theories. This correspondence is explored in \cite{vasey16b}.

\begin{defin}\label{tree}
Let $\mu, \lambda \in [\LS(\K), \infty)$ and $k < \omega$. $\K$ has the $(\mu, \lambda, k)$-tree property if there is $\{ (a_\eta, B_\eta) : \eta \in  {}^{< \mu}\lambda \}$\footnote{As always we assume that $\forall \eta( a_\eta \in \ce \text{ and } B_\eta \subseteq \ce)$.} such that:
\begin{enumerate}
\item $\forall  \eta \in  {}^{< \mu}\lambda ( |B_\eta | < \LS(\K))$.
\item $\forall \nu \in {}^\mu\lambda ( \{ \gtp(a_{\nu \rest_{\alpha}}/ B_{\nu \rest_{\alpha}}) : \alpha < \mu \} \text{\, is consistent} )$.
\item $\forall  \eta \in  {}^{< \mu}\lambda( \{\gtp(a_{\eta^{\wedge}\alpha}/ B_{\eta^{\wedge}\alpha}) : \alpha < \lambda \} \text{ is $k$-contradictory})$.
\end{enumerate}

We say that $\K$ has the $k$-tree property if for all $\mu, \lambda \in [\LS(\K), \infty)$ $\K$ has the $(\mu, \lambda, k)$-tree property and $\K$ has the tree property if there is a $k < \omega $ such that $\K$ has the $k$-tree property.

\end{defin}

The following lemma relates the two concepts we just introduced. A similar construction in the first-order context appears in\cite[2.3]{cas}.

\begin{lemma}\label{2-tp}
Assume $\lambda^{<\mu} = \lambda$ and $\LS(\K) \leq \mu \leq \lambda$. If $\K$ has the $(\mu, \lambda, 2)$-tree property, then $NT(\mu, \lambda,2) = \lambda^{\mu}$. Moreover,  $NT(\mu, \lambda, \kappa) \geq \lambda^\mu$ for all $\kappa \geq 2$.\footnote{As usual we assume that $\lambda, \mu$ are cardinals way below the size of the monster model.}
\end{lemma}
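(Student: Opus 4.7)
The plan is to get the upper bound $NT(\mu,\lambda,2) \leq \lambda^\mu$ for free from Proposition \ref{first-b}(3)(a), and to build from the $(\mu,\lambda,2)$-tree property a family of $\lambda^\mu$ pairwise inconsistent Galois-types realizing the reverse inequality. For the moreover clause, monotonicity in the third coordinate (Proposition \ref{first-b}(2)) gives $NT(\mu,\lambda,\kappa) \geq NT(\mu,\lambda,2) = \lambda^\mu$ for every $\kappa \geq 2$, so only $\kappa = 2$ needs attention.

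Fix a witness $\{(a_\eta,B_\eta) : \eta \in {}^{<\mu}\lambda\}$ to the $(\mu,\lambda,2)$-tree property. First I would choose a single ambient model $M^* \in \K_\lambda$ containing $\bigcup_{\eta \in {}^{<\mu}\lambda} B_\eta$. This is possible because the union has cardinality at most $\lambda^{<\mu}\cdot \LS(\K) = \lambda$ under the hypotheses $\lambda^{<\mu}=\lambda$ and $\LS(\K)\leq \mu\leq \lambda$, so the L\"owenheim--Skolem--Tarski axiom produces such an $M^*$. For each branch $\nu \in {}^\mu\lambda$, consistency of $\{\gtp(a_{\nu\rest_\alpha}/B_{\nu\rest_\alpha}) : \alpha<\mu\}$ yields an element $a_\nu \in \ce$ realizing every type on the branch, and a second invocation of L\"owenheim--Skolem--Tarski inside $M^*$ produces a submodel $N_\nu \lea M^*$ of size at most $\mu$ with $\bigcup_{\alpha<\mu} B_{\nu\rest_\alpha} \subseteq |N_\nu|$. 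I would then set $p_\nu := \gtp(a_\nu/N_\nu)$ and let $\Gamma := \{p_\nu : \nu \in {}^\mu\lambda\} \subseteq \gS(M^*,\leq\mu)$.

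The key step is to verify that $\{p_\nu,p_{\nu'}\}$ is inconsistent whenever $\nu \neq \nu'$, which simultaneously forces the $p_\nu$ to be pairwise distinct, so that $|\Gamma|=\lambda^\mu$ and $NT(\mu,\lambda,2) \geq \lambda^\mu$. Given $\nu \neq \nu'$, let $\alpha := \min\{\beta<\mu : \nu(\beta)\neq \nu'(\beta)\}$ and $\eta := \nu\rest_\alpha = \nu'\rest_\alpha$. Because $B_{\nu\rest_{\alpha+1}} \subseteq N_\nu$ and $B_{\nu'\rest_{\alpha+1}} \subseteq N_{\nu'}$, any common realizer of $p_\nu$ and $p_{\nu'}$ would have to realize both $\gtp(a_{\eta^{\wedge}\nu(\alpha)}/B_{\eta^{\wedge}\nu(\alpha)})$ and $\gtp(a_{\eta^{\wedge}\nu'(\alpha)}/B_{\eta^{\wedge}\nu'(\alpha)})$, contradicting the $2$-contradictoriness of the immediate successors of $\eta$ in the tree. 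The only real obstacle I foresee is keeping the cardinal bookkeeping straight so that $M^*$ stays in $\K_\lambda$ while each $N_\nu$ stays of size at most $\mu$, and this is precisely where the hypothesis $\lambda^{<\mu}=\lambda$ is used.
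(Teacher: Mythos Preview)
Your proposal is correct and follows essentially the same argument as the paper: both construct the ambient model $M$ (your $M^*$) of size $\lambda$ using $\lambda^{<\mu}=\lambda$, build the branch-types $p_\nu=\gtp(a_\nu/N_\nu)$ over submodels of size $\leq\mu$, and invoke the $2$-contradictoriness at the splitting level to get pairwise inconsistency, then appeal to Proposition~\ref{first-b} for the upper bound and the moreover clause. Your write-up is in fact slightly more detailed than the paper's on the inconsistency step; there is no gap.
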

\begin{proof}
By the definition of the tree property we have  $\{ (a_\eta, B_\eta) : \eta \in  {}^{< \mu}\lambda \}$ such that:
\begin{enumerate}
\item $\forall  \eta \in  {}^{< \mu}\lambda ( |B_\eta | < \LS(\K))$.
\item $\forall \nu \in {}^\mu\lambda ( \{ \gtp(a_{\nu \rest_{\alpha}}/ B_{\nu \rest_{\alpha}}) : \alpha < \mu \} \text{\, is consistent})$.
\item $\forall  \eta \in  {}^{< \mu}\lambda( \{\gtp(a_{\eta^{\wedge}\alpha}/ B_{\eta^{\wedge}\alpha}) : \alpha < \lambda \} \text{ is $2$-contradictory})$.
\end{enumerate}

Let $A=\bigcup_{\eta \in  {}^{< \mu}\lambda} B_{\eta}$. Since $\lambda^{<\mu} = \lambda$ and each $B_\eta$ has cardinality less than $\LS(\K)$, we have that $|A|\leq \lambda$. So applying downward L\"{o}wenheim-Skolem in $\ce$ we obtain $M \in \K_\lambda$ such that $\forall  \eta \in  {}^{< \mu}\lambda ( B_\eta \subseteq |M|)$.

For each  $\nu \in {}^\mu\lambda$, pick $a_\nu \in \ce$ realizing $\{ \gtp(a_{\nu \rest_{\alpha}}/ B_{\nu \rest_{\alpha}}) : \alpha < \mu \}$ and apply downward L\"{o}wenheim-Skolem to  $\bigcup_{\alpha < \mu} B_{\nu \rest_{\alpha}}$ in $M$ to get $M_\nu \in [M]^{\leq \mu}$. Then define $p_\nu := \gtp(a_\nu/ M_\nu)$.

Observe that $\{ p_\nu : \nu\in  {}^\mu\lambda\} \subseteq \gS(M, \leq\mu)$ and using part (3) of the definition of the tree property it is easy to show that: if $\nu_1 \neq \nu_2$, then $p_{\nu_1} \neq p_{\nu_2}$. Therefore $|\{ p_\nu  : \nu\in  {}^\mu\lambda\}|= \lambda^{\mu}$. Moreover, using  part (3) of the definition of the tree property it follows that any pair of types is inconsistent. Hence $NT(\mu, \lambda, 2) \geq \lambda^{\mu}$.

The equality and moreover part follow from Proposition \ref{first-b}.
\end{proof}

As we will see later, if  we only know that $\K$ has the tree property it becomes more complicated to obtain a lower bound on $NT(\text{-},\text{-},\text{-})$.

\section{Stable independence relations}

In this section we deal with stable independence relations. The definition given here for a stable independence relation is similar to the one given in \cite{lrv}. The properties given here are obtained by taking the  ``closure" of a stable independence relation in the sense of \cite{lrv}; this is formalized in \cite[8.2]{lrv}. An important difference with \cite{lrv} is that we do not assume the witness property.

\begin{defin}[{\cite[8.4, 8.5, 8.6]{lrv}}]\label{sta-ind}
 $\dnfb$ is a \emph{stable independence relation} in $\K$ if the following properties hold:
\begin{enumerate}
\item $\dnfb$ is an independence relation. 
\item (Symmetry)  $A \dnfb_{M}  B$ if and only if $B \dnfb_{M}  A$.
\item (Uniqueness) Let $p,q \in \gS^{<\infty}(B; N)$ with $M \lea N$ and $| M | \subseteq B \subseteq |N| $. If $p\rest_{M}= q\rest_M$ and $p,q$ do not fork over $M$, then $p=q$.  
\item (Local character) For each cardinal $\alpha$ there exists a cardinal $\lambda$  (depending on $\alpha$) such that: If $p \in \gS^{\alpha}(M)$, then there exists $M_0 \lea M$ with $|| M_0 || \leq \lambda$ and $p$ does not fork over $M_0$.  
\end{enumerate}
\end{defin}

We begin by bounding $NT(\text{-},\text{-},\text{-})$.

\begin{theorem}\label{tp-st}
If $\dnfb$ is a stable independence relation, then \[ NT(\mu, \lambda, \kappa) \leq \lambda^{\kappa_1(\dnfb)} + \kappa^{-}. \]
In particular, we get that $NT(\mu, \lambda) \leq \lambda^{\kappa_1(\dnfb)}$.
\end{theorem}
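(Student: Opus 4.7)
The plan is to use local character to push each $p\in \Gamma$ down to a small base model, pigeonhole on the resulting (base, restriction)-pair, and then use existence together with uniqueness to show that a large homogeneous subfamily is automatically consistent—contradicting the defining inconsistency of $\Gamma$. Fix $M \in \K_\lambda$ and $\Gamma \subseteq \gS(M, \leq \mu)$ with every $\Delta \subseteq \Gamma$ of size $\geq \kappa$ inconsistent. Write $\kappa_1 := \kappa_1(\dnfb)$; one may assume $\kappa_1 < \infty$, else the bound is trivial. For each $p \in \Gamma$, local character (applied at $\alpha = 1$, since $\gS(M,\leq\mu)$ consists of singleton types) supplies $M_p \lea \dom(p)$ with $\|M_p\| \leq \kappa_1$ such that $p$ does not fork over $M_p$; set $q_p := p \rest_{M_p} \in \gS(M_p)$.

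Counting $\K$-submodels of $M$ of size $\leq \kappa_1$ by their underlying sets and using the crude bound $|\gS(N)| \leq 2^{\|N\|} \leq 2^{\kappa_1}$ for each such $N$, the assignment $p \mapsto (M_p, q_p)$ takes values in a set of cardinality at most $\lambda^{\kappa_1} \cdot 2^{\kappa_1} = \lambda^{\kappa_1}$. Suppose toward a contradiction that $|\Gamma| > \lambda^{\kappa_1} + \kappa^{-}$. A routine pigeonhole—split on whether $\kappa$ is a successor $\theta^{+}$ (fibres of size $< \kappa$ then have size $\leq \theta = \kappa^{-}$, so the total would be at most $\lambda^{\kappa_1} \cdot \kappa^{-} = \lambda^{\kappa_1} + \kappa^{-}$) or a limit cardinal (in which case $\kappa^{-} = \kappa$ and the same identity holds)—produces a fibre $\Gamma_1 \subseteq \Gamma$ of size $\geq \kappa$ with a common base $M_0$ and common restriction $q_0 \in \gS(M_0)$.

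Finally I show $\Gamma_1$ is consistent, which will yield the desired contradiction. By existence, extend $q_0$ to some $\tilde q \in \gS(M)$ not forking over $M_0$. For each $p \in \Gamma_1$, the restriction $\tilde q \rest_{\dom(p)}$ does not fork over $M_0$ by monotonicity and satisfies $\tilde q \rest_{\dom(p)} \rest_{M_0} = \tilde q \rest_{M_0} = q_0 = p \rest_{M_0}$; since $p$ is likewise a non-forking extension of $q_0$ to $\dom(p)$, uniqueness forces $\tilde q \rest_{\dom(p)} = p$. Any realization of $\tilde q$ therefore realizes every member of $\Gamma_1$, so $\Gamma_1$ is consistent of size $\geq \kappa$, contradicting the defining property of $\Gamma$. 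Hence $|\Gamma| \leq \lambda^{\kappa_1} + \kappa^{-}$, proving the bound; the in particular clause is the case $\kappa = 2$ (with $2^{-} \leq 1 \leq \lambda^{\kappa_1}$). The only mildly subtle step is the pigeonhole case split needed to land on $\kappa^{-}$ rather than $\kappa$ on the right-hand side; everything else is a direct application of the axioms of a stable independence relation.
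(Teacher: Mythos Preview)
Your proof is correct and follows essentially the same strategy as the paper's: local character to find small bases, pigeonhole on the pair (base model, restriction), and then uniqueness to conclude consistency of the resulting subfamily. The only cosmetic difference is in the final step: the paper extends each $p_\alpha$ upward to a nonforking $q_\alpha \in \gS(M)$ (using existence plus transitivity) and then applies uniqueness over $M$ to identify all the $q_\alpha$, whereas you extend the common restriction $q_0$ once to a single $\tilde q \in \gS(M)$ and restrict down, applying uniqueness over each $\dom(p)$; your variant saves an appeal to transitivity but is otherwise the same argument.
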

\begin{proof}
Let $\lambda_0=\kappa_1(\dnfb)$, $\chi = \lambda^{\lambda_0} + \kappa^{-}$ and $\{ p_\alpha :\alpha < \chi^+ \} \subseteq \gS(M, \leq \mu)$ for $M \in \K_\lambda$. 

By local character for every $\alpha < \chi^+$ there is  $R_\alpha \in [M]^{\lambda_0}$ such that $p_\alpha \text{ does not fork over } R_\alpha$. We define $\Phi: \chi^+ \to [M]^{\lambda_0}$ as $\Phi(\alpha)=R_\alpha$. Then by the pigeonhole principle  there is $R \in [M]^{\lambda_0}$ and $S \subseteq \chi^+$ of cardinality $\chi^+$ such that  $p_\alpha \text{ does not fork over }  R$ for every $\alpha \in S$. Now define $\Psi: S \to  \gS(R)$ as $\Psi(\alpha)= p_\alpha\rest_{R}$, since $|\gS(R)| \leq 2^{\lambda_0}$, by the pigeonhole principle  there is $p \in \gS(R)$ and $S'\subseteq S$ of size $\chi^+$ such that $p_\alpha\rest_{R}=p$ for every $\alpha \in S'$. Observe that $p_\alpha \geq p \text{ and } p_\alpha \text{ does not fork over }  R$ for every $\alpha \in S'$.

By the extension property and transitivity for each $\alpha \in S'$, there is $q_\alpha \in \gS(M) $ extending $p_\alpha$ such that $q_\alpha\text{ does not fork over } R$. Then by uniqueness, using that for all $\alpha, \beta \in S'$ we have that $q_\alpha\rest_{R}=p_\alpha\rest_{R}=p=p_\beta\rest_{R}=q_\beta\rest_{R}$ and that both $q_\alpha, q_\beta\text{ do not fork over } R$, we conclude that there is $q \in \gS(M)$ such that $q_\alpha=q$ for every $\alpha \in S'$.  In particular, $\{p_\alpha : \alpha \in S'\}$ is consistent and $|S'|\geq \kappa$. Hence $NT(\mu, \lambda, \kappa) \leq  \lambda^{\lambda_0} + \kappa^-$.

\end{proof}

The next corollary follows directly from Proposition \ref{first-b} and the above theorem. A version of it already appears in \cite[5.17]{bgkv} and \cite[8.15]{lrv}.

\begin{cor}\label{st-st}
If $\dnfb$ is a stable independence relation, then $\K$ is $\lambda$-Galois-stable for every $\lambda$ such that $\lambda^{\kappa_1(\dnfb)}=\lambda$. 
\end{cor}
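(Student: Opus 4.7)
The plan is to combine Theorem \ref{tp-st} with part (4) of Proposition \ref{first-b} in a direct computation. Proposition \ref{first-b}(4) tells us that $\K$ is $\lambda$-Galois-stable iff $NT(\mu,\lambda,\kappa)\leq \lambda$ for every $\mu\in[\LS(\K),\lambda]$ and every $\kappa\in[2,\lambda^+]$, so the entire task reduces to verifying this uniform bound from the hypothesis $\lambda^{\kappa_1(\dnfb)}=\lambda$.

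First I would fix $\lambda$ with $\lambda^{\kappa_1(\dnfb)}=\lambda$ and take arbitrary $\mu\in[\LS(\K),\lambda]$ and $\kappa\in[2,\lambda^+]$. By Theorem \ref{tp-st} we have
\[ NT(\mu,\lambda,\kappa)\leq \lambda^{\kappa_1(\dnfb)}+\kappa^{-}. \]
The first summand is exactly $\lambda$ by hypothesis. For the second summand, I would split on cases: if $\kappa\leq\lambda$ then trivially $\kappa^{-}\leq\kappa\leq\lambda$; if $\kappa=\lambda^+$ then by definition of $\kappa^{-}$ we get $\kappa^{-}=\lambda$. Either way $\kappa^{-}\leq\lambda$, and therefore $NT(\mu,\lambda,\kappa)\leq \lambda+\lambda=\lambda$.

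Since this bound holds uniformly in $\mu$ and $\kappa$ within the required ranges, Proposition \ref{first-b}(4) immediately yields that $\K$ is $\lambda$-Galois-stable. There is no real obstacle here; the only point worth being careful about is the case $\kappa=\lambda^+$ in the definition of $\kappa^{-}$, since otherwise one might worry about $\kappa^{-}$ exceeding $\lambda$.
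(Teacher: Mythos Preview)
Your proof is correct and follows exactly the approach the paper indicates: the paper simply states that the corollary ``follows directly from Proposition~\ref{first-b} and the above theorem,'' and you have spelled out this deduction via Proposition~\ref{first-b}(4) and Theorem~\ref{tp-st}. One could alternatively use part~(1) of Proposition~\ref{first-b} (giving $|\gS(M)|\leq NT(\lambda,\lambda,2)\leq \lambda^{\kappa_1(\dnfb)}+1=\lambda$), but your route through part~(4) is equally valid and arguably closer to the spirit of the general bound in Theorem~\ref{tp-st}.
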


We show that the existence of a stable independence relation implies the failure of the tree property.

\begin{lemma}\label{ntp}
If $\K$ has $\dnfb$ a stable independence relation, then $\K$ does not have the tree property.
\end{lemma}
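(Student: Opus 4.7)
The plan is to use the $k$-tree property to produce many Galois-types over small submodels of a fixed $M\in\K_\lambda$ whose sufficiently large subsets are all inconsistent, and then contradict the upper bound of Theorem \ref{tp-st}. Assume for contradiction that $\K$ has the tree property, and fix $k<\omega$ witnessing it.

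Choose a regular cardinal $\mu>\kappa_1(\dnfb)+\LS(\K)$ together with a strong limit cardinal $\lambda$ of cofinality $\mu$ (for instance $\lambda=\beth_\mu$). Standard cardinal arithmetic gives $\lambda^{<\mu}=\lambda^{\kappa_1(\dnfb)}=\lambda$ and $(k-1)^\mu\le 2^\mu<\lambda$, while K\"{o}nig's theorem yields $\lambda^\mu>\lambda$. Apply the $(\mu,\lambda,k)$-tree property to get a tree $\{(a_\eta,B_\eta):\eta\in{}^{<\mu}\lambda\}$ and carry out the construction of Lemma \ref{2-tp}: fix $M\in \K_\lambda$ containing $\bigcup_\eta B_\eta$, for each $\nu\in{}^\mu\lambda$ pick a realizer $a_\nu$ of $\{\gtp(a_{\nu\rest_\alpha}/B_{\nu\rest_\alpha}):\alpha<\mu\}$ together with a submodel $M_\nu\lea M$ of size $\le\mu$ containing $\bigcup_{\alpha<\mu}B_{\nu\rest_\alpha}$, and set $p_\nu:=\gtp(a_\nu/M_\nu)\in\gS(M,\le\mu)$.

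The key claim is that for any $a\in\ce$, the set $\{\nu\in{}^\mu\lambda:a\text{ realizes }p_\nu\}$ has cardinality at most $(k-1)^\mu$. Indeed, such an $a$ realizes $\gtp(a_{\nu\rest_\alpha}/B_{\nu\rest_\alpha})$ for each such $\nu$ and each $\alpha<\mu$; at every internal node $\eta$ of the subtree of ${}^{<\mu}\lambda$ spanned by these $\nu$'s, the $k$-contradictory condition on the siblings of $\eta$ (which, after harmless thinning, may be taken to be pairwise distinct since each type can appear at most $k-1$ times in a $k$-contradictory family) prevents $a$ from realizing $k$ or more sibling types, so this subtree has branching strictly less than $k$ and hence at most $(k-1)^\mu$ branches. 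This simultaneously bounds every consistent subset of $\{p_\nu:\nu\in{}^\mu\lambda\}$ by $(k-1)^\mu$ and bounds the fibres of $\nu\mapsto p_\nu$ by $(k-1)^\mu<\lambda^\mu$; consequently $|\{p_\nu:\nu\in{}^\mu\lambda\}|=\lambda^\mu$, so $NT(\mu,\lambda,((k-1)^\mu)^+)\ge\lambda^\mu$. This contradicts the bound $NT(\mu,\lambda,((k-1)^\mu)^+)\le \lambda^{\kappa_1(\dnfb)}+(k-1)^\mu=\lambda<\lambda^\mu$ supplied by Theorem \ref{tp-st}.

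The main obstacle is the fibre analysis for $k\ge 3$: unlike the $2$-tree property case of Lemma \ref{2-tp}, the map $\nu\mapsto p_\nu$ need not be injective, and the pairwise-inconsistency argument must be replaced by a branching bound for the subtree witnessing consistency, derived from the $k$-contradictory condition on sibling types. The cardinal arithmetic, which hinges on choosing $\lambda$ strong limit of cofinality $\mu$ larger than $\kappa_1(\dnfb)$, is then used to separate $\lambda^\mu$ from $\lambda^{\kappa_1(\dnfb)}$ via K\"{o}nig's theorem.
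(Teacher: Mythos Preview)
Your argument is correct and follows essentially the same route as the paper: build the types $p_\nu$ from a $(\mu,\lambda,k)$-tree, observe that any simultaneous realization forces the corresponding subtree to be $(k-1)$-branching (the paper says ``finitely branching'' and uses the cruder bound $2^\mu$), use this both to bound consistent subsets and to bound fibres of $\nu\mapsto p_\nu$, and then collide the resulting lower bound on $NT(\mu,\lambda,\kappa)$ with Theorem \ref{tp-st} via K\"onig's theorem for $\lambda$ strong limit of cofinality $\mu>\kappa_1(\dnfb)$. The only cosmetic differences are your sharper constants $((k-1)^\mu$ in place of $2^\mu$, $\lambda^\mu$ in place of $\lambda^+)$ and the specific choice $\lambda=\beth_\mu$ rather than $\beth_\mu(\mu)$; neither changes the structure of the proof.
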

\begin{proof}
Let $\kappa_1(\dnfb)=\lambda_0$ and $k < \omega$ such that $\K$ has the $k$-tree property. Let $\mu =\lambda_0 ^+$ and $\lambda=\beth_{\mu}(\mu)$. By the definition of the $(\mu, \lambda, k)$-tree property there are $\{ (a_\eta, B_\eta) : \eta \in  {}^{< \mu}\lambda \}$ such that:
\begin{enumerate}
\item $\forall  \eta \in  {}^{< \mu}\lambda ( \|B_\eta \| < \LS(\K))$.
\item $\forall \nu \in {}^\mu\lambda ( \{ \gtp(a_{\nu \rest_{\alpha}}/ B_{\nu \rest_{\alpha}}) : \alpha < \mu \} \text{\, is consistent } )$.
\item $\forall  \eta \in  {}^{< \mu}\lambda( \{\gtp(a_{\eta^{\wedge}\alpha}/ B_{\eta^{\wedge}\alpha}) : \alpha < \lambda \} \text{ is $k$-contradictory})$.
\end{enumerate}

Realize that $\lambda^{<\mu}=\lambda$, so doing a similar construction to that of Lemma \ref{2-tp} we have  $M \in \K_\lambda$ and for each $\nu \in \lambda^{\mu}$  we fix $p_\nu=\gtp(a_\nu/ M_\nu)$ such that $M_\nu \in [M]^{\leq \mu}$ and $\forall \alpha < \mu( \gtp(a_{\nu \rest_{\alpha}}/ B_{\nu \rest_{\alpha}}) \leq p_\nu)$. 

Observe that if $A \subseteq {}^\mu\lambda$ and $\{ p_\nu : \nu \in A \}$ is consistent then the tree $\{\nu\rest_{\alpha} : \alpha < \mu, \nu \in A \}$ is finitely branching by condition (3) of the tree property, hence $|A| \leq 2^\mu$. Therefore we can conclude that for all $\Delta \subseteq \{ p_\nu : \nu \in \lambda^\mu \}$, if $|\Delta| \geq (2^\mu)^+$, then $\Delta$ is inconsistent.

Since $cf(\lambda)=\mu$, by K\"{o}nig Lemma, we have that $\lambda^\mu=  \beth_{\mu}(\mu)^\mu \geq   \beth_{\mu}(\mu)^+= \lambda^+$. We claim that $|\{ p_\nu : \nu \in \lambda^\mu \}| \geq \lambda^+$. If it was not the case, then there would be $S \subseteq \lambda^{\mu}$ with $|S|=\lambda^+$ and $\{ p_\mu : \nu \in S\}$ consistent; but this would contradict the previous paragraph since  $(2^\mu)^+ <   \beth_{\mu}(\mu)^+=\lambda^+$.  Hence \begin{equation}\lambda^+\leq NT(\mu, \lambda, (2^{\mu})^+). 
\end{equation}

On the other hand, by Theorem \ref{tp-st}, we have that  $NT(\mu, \lambda, (2^{\mu})^+)\leq \lambda^{\lambda_0} + 2^{\mu}$. Moreover, one can show that $\lambda^{\lambda_0} = \lambda$ and that $2^{\mu} \leq \lambda$, hence \
\begin{equation}
NT(\mu, \lambda, (2^{\mu})^+) \leq \lambda.\end{equation}

The last two equations give us a contradiction. \end{proof}

The above proof can also be carried out in Shelah's context of good frames, see \cite[\S II]{shelahaecbook} or  \cite[\S3]{m1} for the definition.
\begin{cor}
Let $\K$ be an AEC. If $\K$ has a type-full good $[\lambda_0, \infty)$-frame, then $\K$ does not have the tree property.
\end{cor}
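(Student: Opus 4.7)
The plan is to imitate the proof of Lemma \ref{ntp} essentially verbatim, after observing that a type-full good $[\lambda_0,\infty)$-frame supplies every property of a stable independence relation invoked in that proof, provided we restrict attention to Galois-types of length one over models of cardinality at least $\lambda_0$. Concretely, for $M$ with $\|M\|\geq \lambda_0$ define $A \dnfb_M B$ to mean that $\gtp(\ba/B)$ does not fork (in the frame sense) over $M$ for every finite tuple $\ba$ from $A$; the frame axioms then give monotonicity, base monotonicity, transitivity, extension, symmetry, uniqueness of non-forking extensions of $1$-types, and local character with $\kappa_1(\dnfb)\leq \lambda_0$.

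Assume for contradiction that $\K$ has the $k$-tree property for some $k<\omega$, set $\mu = \lambda_0^+$ and $\lambda = \beth_\mu(\mu)$, so that every model appearing below lies in the range of the frame. Unwinding Definition \ref{tree} produces a witnessing family $\{(a_\eta, B_\eta) : \eta \in {}^{<\mu}\lambda\}$; since $\lambda^{<\mu} = \lambda$, downward L\"owenheim-Skolem inside $\ce$ yields $M \in \K_\lambda$ containing every $B_\eta$, and for each branch $\nu \in {}^\mu\lambda$ one picks a realizer $a_\nu$ of the branch types together with a small $M_\nu \in [M]^{\leq \mu}$ supporting the branch, and sets $p_\nu := \gtp(a_\nu/M_\nu)$, exactly as in Lemma \ref{ntp}.

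The lower bound $\lambda^+ \leq NT(\mu, \lambda, (2^\mu)^+)$ is purely combinatorial and is copied from Lemma \ref{ntp}: $k$-contradictoriness at each splitting level forces any consistent subfamily $\{p_\nu : \nu \in A\}$ to have its branches form a finitely branching tree of height $\mu$, whence $|A| \leq 2^\mu$, while K\"onig's Lemma gives $\lambda^\mu \geq \lambda^+$. For the matching upper bound $NT(\mu, \lambda, (2^\mu)^+) \leq \lambda^{\lambda_0} + 2^\mu = \lambda$ one repeats the proof of Theorem \ref{tp-st} with the frame in the place of the stable independence relation: local character compresses each candidate type $p_\alpha$ to a non-forking restriction over some $R_\alpha \in [M]^{\lambda_0}$, a pigeonhole over $[M]^{\lambda_0}$ and then over $\gS(R)$ picks out a common base $R$ and a common restriction $p \in \gS(R)$ on a set $S'$ of size $(\lambda^{\lambda_0}+2^\mu)^+$, and uniqueness of non-forking extensions (applied in $\K_{\geq \lambda_0}$, which is where the frame operates) forces all the corresponding full extensions to $M$ to coincide, making $\{p_\alpha : \alpha \in S'\}$ consistent. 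The two bounds clash, contradicting the tree property.

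The only non-routine step is verifying that the frame-based non-forking relation really satisfies the closure properties needed to quote Theorem \ref{tp-st}, but since a type-full good frame is designed to axiomatize precisely length-$1$ non-forking over models in its range, this reduces to bookkeeping and does not require genuinely new ideas.
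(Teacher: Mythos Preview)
Your approach matches the paper's: both observe that the frame supplies exactly the properties needed to rerun Theorem \ref{tp-st} and Lemma \ref{ntp} verbatim for $1$-types over models in $\K_{\ge\lambda_0}$. The one point you list as a frame axiom but the paper flags as needing a short argument is $\kappa_1(\dnfb)\le\lambda_0$: in a good frame local character is stated for increasing continuous chains, and the paper notes that the form you need---every $p\in\gS(M)$ does not fork over some $N\in[M]^{\lambda_0}$---is obtained from this by an easy induction on $\|M\|$.
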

\begin{proof}[Proof sketch] 
Using local character (in the sense of a good frame) it is easy to show by induction on $\|M\|$ that for every  $p \in \gS(M)$ there is  $N \in [M]^{\lambda_0}$ such that $p \text{ does not fork over } N$. Using this fact together with the properties of  type-full good $[\lambda_0, \infty)$-frame one can show that the proofs of Theorem \ref{tp-st} and Lemma \ref{ntp} go through.
\end{proof}

\begin{remark}
The above corollary goes through in the weaker setting of a type-full $good^{-}[\lambda_0, \infty)$-frame (see \cite[3.5.(4)]{m1}).
We do not know if it still goes through in the even weaker setting of w-good frames (see \cite[3.7]{m1}).
\end{remark}

\subsection{Almost-stable independence relations} In this small subsection, we study what happens if instead of assuming uniqueness of extensions one assumes that the independence relation is contained in non-splitting. We show that this weaker assumption still implies stability of the AEC and the existence of a sub$\mu$-AEC with a stable independence relation. Moreover, the results in this subsection are used to obtain a new characterization of stable first-order theories assuming simplicity.  A similar notion is studied in \cite[\S 6]{shvas} under stability assumptions.

A generalization of non-splitting to AECs was introduced in \cite{bgkv}.

\begin{defin}[{\cite[3.14]{bgkv}}] We say that $A$ \emph{does not explicitly split} from $B$ over $M$, denoted by $A \nes_M B$,
if and only if for every $B_1, B_2 \subseteq B$, if $\gtp(B_1/M)=\gtp(B_2/M)$ then $\gtp(AB_1/M)=\gtp(AB_2/M)$.\end{defin}

To ease the reference to stable independence relations without uniqueness but contained in explicitly non-splitting, we introduce the following notion.

\begin{defin}\label{star-sim}
$\dnfb$ is an \emph{almost-stable independence relation} in $\K$ if the following hold: 

\begin{enumerate}
\item $\dnfb$ is an independence relation. 
\item (Symmetry)  $A \dnfb_{M}  B$ if and only if $B \dnfb_{M}  A$.
\item (Local character) For each cardinal $\alpha$ there exists a cardinal $\lambda$  (depending on $\alpha$) such that: If $p \in \gS^{\alpha}(M)$, then there exists $M_0 \lea M$ with $|| M_0 || \leq \lambda$ and $p$ does not fork over $M_0$.  Recall that $\kappa_\alpha(\dnfb)$ is the least $\lambda$ given a fixed cardinal $\alpha$. 
\item $\dnfb \subseteq \nes$.
\end{enumerate}
\end{defin}

\begin{remark}
It follows from \cite[4.2]{bgkv} that if $\dnfb$ is a stable independence relation, then $\dnfb \subseteq \nes$. Hence, a stable independence relation is an almost-independence relation.
\end{remark}

We begin by showing that a class with  an almost-stable independence relation is tame. This extends \cite[8.16]{lrv} as they prove it for stable independence relations.

\begin{lemma}\label{star-tame}
If $\dnfb$ is an almost-stable independence relation, then $\K$ is $\kappa_{2\alpha}(\dnfb)$-tame for types of length $\alpha$.
\end{lemma}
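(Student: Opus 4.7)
The plan is to apply local character to the \emph{concatenated} tuple of realizations, producing a small base from which tameness follows directly via symmetry and the containment in explicit non-splitting.

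First, given $p \neq q \in \gS^\alpha(M)$, I would fix realizations $\ba \models p$ and $\bb \models q$ and consider the concatenated tuple $\ba\bb$ of length $2\alpha$. Applying local character of $\dnfb$ to $\gtp(\ba\bb/M) \in \gS^{2\alpha}(M)$ furnishes $M_0 \lea M$ with $\|M_0\| \leq \kappa_{2\alpha}(\dnfb)$ such that $\ba\bb \dnfb_{M_0} M$. The goal is then to show $p\rest_{M_0} \neq q\rest_{M_0}$, which witnesses $(<\kappa_{2\alpha}(\dnfb)^+)$-tameness for types of length $\alpha$.

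Proceeding by contradiction, suppose $p\rest_{M_0} = q\rest_{M_0}$, i.e.\ $\gtp(\ba/M_0) = \gtp(\bb/M_0)$. Symmetry applied to $\ba\bb \dnfb_{M_0} M$ gives $M \dnfb_{M_0} \ba\bb$, and the containment $\dnfb \subseteq \nes$ upgrades this to $M \nes_{M_0} \ba\bb$. Since $\ba$ and $\bb$ are subtuples of $\ba\bb$ with equal Galois-type over $M_0$, the definition of explicit non-splitting yields $\gtp(M\ba / M_0) = \gtp(M\bb/M_0)$. Any automorphism of $\ce$ fixing $M_0$ and sending $M\ba$ to $M\bb$ must fix $M$ pointwise (since the $M$-coordinates match up identically), so we conclude $\gtp(\ba/M) = \gtp(\bb/M)$, i.e.\ $p = q$, contradicting $p \neq q$.

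The main technical insight, and the only nonobvious step, is to apply local character to the \emph{joint} type $\gtp(\ba\bb/M)$ of length $2\alpha$ rather than to $p$ and $q$ individually; this is what makes $\ba$ and $\bb$ available as subtuples of a single tuple that is independent of $M$ over the common base $M_0$, enabling the non-splitting axiom to transfer the equality $\gtp(\ba/M_0) = \gtp(\bb/M_0)$ all the way up to $\gtp(\ba/M) = \gtp(\bb/M)$. Beyond this choice, no serious obstacle is anticipated; the rest is a direct unwinding of definitions.
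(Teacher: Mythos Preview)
Your argument is correct and essentially identical to the paper's own proof: both apply local character to the concatenated type $\gtp(\ba\bb/M)$ of length $2\alpha$ to obtain a small base $M_0$, use symmetry and the containment $\dnfb \subseteq \nes$ to get $M \nes_{M_0} \ba\bb$, and then invoke the definition of explicit non-splitting on the subtuples $\ba,\bb$ (which have equal type over $M_0$) to conclude $\gtp(\ba M/M_0)=\gtp(\bb M/M_0)$ and hence $p=q$. The only cosmetic difference is that the paper phrases it as the contrapositive (assume agreement on all small subsets, derive $p=q$) rather than your contradiction framing, and you are slightly more explicit about why $\gtp(M\ba/M_0)=\gtp(M\bb/M_0)$ forces the witnessing automorphism to fix $M$ pointwise.
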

\begin{proof}
Let $N \in \K$ and $p, q \in \gS^{\alpha}(N)$ such that $p\rest_D = q\rest_D$ for every $D \in \mathcal{P}_{\leq \kappa_{2\alpha}(\dnfb)}(N)$. Assume that $p=\gtp(\ba/N)$ and $q=\gtp(\bb/N)$ for $ \ba, \bb \in \ce^{\alpha}$. 

Consider $\gtp(\ba \bb/N)$, then by local character there is $N_0 \lea N$ such that $\gtp(\ba \bb/N)$ does not fork over $N_0$ and $\| N_0 \| \leq \kappa_{2\alpha}(\dnfb)$. By symmetry and the hypothesis that $\dnfb \subseteq \nes$ we have that:

\[ N \nes_{N_0} \ba \bb. \]

Since $\gtp(\ba / N_0)= p\rest_{N_0}=q\rest_{N_0}=\gtp(\bb/ N_0)$ because $N_0$ is small, we have by the definition of explicitly non-splitting that $\gtp(\ba N/N_0)=\gtp(\bb N/ N_0)$. Hence $p = q$.
\end{proof}

The next result is the key result for many of the arguments given in this subsection. The idea of the proof is similar to that of the proof of the weak uniqueness property given in \cite[Theorem I.4.12]{van06}.

\begin{lemma}\label{sat-small}  Let $\mu, \kappa$ be infinite cardinals.
Assume $\dnfb$ is an almost-stable independence relation and $\mu \geq \kappa_{\kappa}(\dnfb) $. If $M$ is $\mu^+$-model homogeneous, $M \lea N$, $p, q \in \gS^{<\infty}(N)$, $p, q$ do not fork over $M$ and $p\rest_M=q\rest_M$, then $p^{I_0}\rest_A=q^{I_0}\rest_A$ for every $A \in \mathcal{P}_{< \mu^+} (N)$ and $I_0 \in \mathcal{P}_{<\kappa} ( |p|)$.
\end{lemma}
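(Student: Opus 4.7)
The plan is to write $p = \gtp((a_i)_{i \in I}/N)$ and $q = \gtp((b_i)_{i \in I}/N)$, fix $A \in \mathcal{P}_{<\mu^+}(N)$ and $I_0 \in \mathcal{P}_{<\kappa}(I)$, and set $\ba_{I_0} = (a_i)_{i \in I_0}$, $\bb_{I_0} = (b_i)_{i \in I_0}$. The goal is then to establish $\gtp(\ba_{I_0}/A) = \gtp(\bb_{I_0}/A)$. From $p\rest_M = q\rest_M$ and monotonicity we obtain $\gtp(\ba_{I_0}/M) = \gtp(\bb_{I_0}/M)$, and from non-forking of $p$ and $q$ over $M$ together with monotonicity we obtain $\ba_{I_0} \dnfb_M N$ and $\bb_{I_0} \dnfb_M N$.

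Next I would apply local character to $\gtp(\ba_{I_0}/M)$ and $\gtp(\bb_{I_0}/M)$ to find submodels $M_0^a, M_0^b \lea M$ of size at most $\kappa_{|I_0|}(\dnfb) \leq \kappa_\kappa(\dnfb) \leq \mu$ witnessing non-forking. Using downward L\"owenheim--Skolem inside $M$, I can extend their union to a single $M_0 \lea M$ of size at most $\mu$; base monotonicity upgrades the non-forking to $\ba_{I_0} \dnfb_{M_0} M$ and $\bb_{I_0} \dnfb_{M_0} M$, and then transitivity (combined with $\ba_{I_0} \dnfb_M N$ and $\bb_{I_0} \dnfb_M N$) yields $\ba_{I_0} \dnfb_{M_0} N$ and $\bb_{I_0} \dnfb_{M_0} N$. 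Next, by downward L\"owenheim--Skolem inside $N$, I pick $N_0 \lea N$ of size at most $\mu$ containing $A \cup |M_0|$, and invoke $\mu^+$-model homogeneity of $M$ (applied to $M_0 \lea N_0$) to produce an embedding $f : N_0 \to_{M_0} M$. Let $A'$ be the image $f[A]$ enumerated so that $f$ carries $A$ to $A'$ pointwise; note $A' \subseteq |M| \subseteq |N|$, and $\gtp(A/M_0) = \gtp(A'/M_0)$.

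Now I would use the hypothesis $\dnfb \subseteq \nes$. From $\ba_{I_0} \dnfb_{M_0} N$ and the equality $\gtp(A/M_0) = \gtp(A'/M_0)$ of subsequences of $N$, explicit non-splitting gives $\gtp(\ba_{I_0}\,A/M_0) = \gtp(\ba_{I_0}\,A'/M_0)$; analogously $\gtp(\bb_{I_0}\,A/M_0) = \gtp(\bb_{I_0}\,A'/M_0)$. Since $A' \subseteq |M|$, both $\gtp(\ba_{I_0}\,A'/M_0)$ and $\gtp(\bb_{I_0}\,A'/M_0)$ are restrictions of $\gtp(\ba_{I_0}/M)$ and $\gtp(\bb_{I_0}/M)$ respectively, which coincide, so $\gtp(\ba_{I_0}\,A'/M_0) = \gtp(\bb_{I_0}\,A'/M_0)$. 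Chaining the three equalities yields $\gtp(\ba_{I_0}\,A/M_0) = \gtp(\bb_{I_0}\,A/M_0)$; restricting the witnessing automorphism (which fixes $M_0$ and sends $A$ to itself pointwise by construction) gives $\gtp(\ba_{I_0}/A) = \gtp(\bb_{I_0}/A)$, as required.

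The main obstacle I anticipate is the bookkeeping in the step that bridges $A \subseteq N$ and its image $A' \subseteq M$: we need both to be enumerated consistently so that the explicit non-splitting condition transfers cleanly, and we must ensure that the small base $M_0$ simultaneously controls non-forking of $\ba_{I_0}$ and $\bb_{I_0}$ over $N$ while still being embeddable into $M$ via the model-homogeneity argument. Once the enumeration and the choice of $M_0$ are done carefully, the application of $\nes$ and restriction are straightforward.
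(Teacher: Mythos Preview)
Your argument is correct and follows essentially the same route as the paper's proof: find a small base $M_0 \lea M$ (the paper calls it $L$) over which both restricted types non-split, use $\mu^+$-model homogeneity to pull a small model $N_0 \supseteq A \cup M_0$ (the paper's $L'$) into $M$, and then apply explicit non-splitting to swap $A$ for its image inside $M$, where $p\rest_M = q\rest_M$ settles the matter. The only cosmetic difference is that the paper applies non-splitting to the full models $L'$ and $f[L']$ rather than just to $A$ and $f[A]$, which is immaterial.
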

\begin{proof}
Let $A, I_0$ be as required and assume that  $p=\gtp(\ba/N)$, $q=\gtp(\bb/N)$ for $ \ba, \bb \in \ce^{\alpha}$ and $\alpha$ is an ordinal.

Consider $p^{I_0}\rest_M$ and $q^{I_0}\rest_M$ then by local character, base monotonicity and using that $|I_0| < \kappa$ there is $L \lea M$ such that $p^{I_0}\rest_M, q^{I_0}\rest_M$ do not fork over $L$ and $\| L \| \leq  \kappa_{\kappa}(\dnfb) \leq \mu $. 

Let $L'$ be the structure obtained by applying downward L\"{o}wenheim-Skolem to $L \cup A$ in $N$, observe that $\| L' \| \leq \mu$. Since $M$ is $\mu^+$-model homogeneous, there is $f: L'  \xrightarrow[L]{} M$.

Then by monotonicity, transitivity and the fact that $\dnfb \subseteq \nes$, we obtain that:

\[ \ba\rest_{I_0} \nes_{L} N \text{ and } \bb\rest_{I_0} \nes_{L} N. \]

Let $C_1= L'$ and $C_2 = f[L']$. Realize that $L \lea C_1, C_2 \lea N$ and $\gtp(C_1/L)=\gtp(C_2/L)$, then by the above equations, the definition of explicitly non-splitting and the choice of $C_1, C_2$ we obtain that:

\[ \gtp(\ba\rest_{I_0}L'/L)=\gtp(\ba\rest_{I_0}f[L']/L) \text{ and }  \gtp(\bb\rest_{I_0}L'/L)=\gtp(\bb\rest_{I_0}f[L']/L). \]

Since by hypothesis $p\rest_M=q\rest_M$ and $f[L']\lea M$, we have that $\gtp(\ba\rest_{I_0}/f[L'])=\gtp(\bb\rest_{I_0}/f[L'])$. Then it follows that $\gtp(\ba\rest_{I_0}f[L']/L)= \gtp(\bb\rest_{I_0}f[L']/L)$. Hence $\gtp(\ba\rest_{I_0}L'/L)=  \gtp(\bb\rest_{I_0}L'/L)$. Therefore, as $A \subseteq L'$, we conclude that $p^{I_0}\rest_A=q^{I_0}\rest_A$.
\end{proof}

\begin{remark}
For $\K$ an AEC with joint embedding, amalgamation and no maximal models, one can show, like in first-order, that if $\lambda \geq \kappa > \LS(\K)$, $M \in \K_{\leq \lambda}$  and $\lambda^{< \kappa}=\lambda$, then there is $N \in \K_\lambda$ such that $N$ is $\kappa$-Galois-saturated extending $M$. Moreover, $N$ is $\kappa$-model homogeneous as Shelah showed the equivalence between saturation and model homogeneity for AECs in \cite[\S II.1.14]{shelahaecbook}.
\end{remark}

We obtain a bound for almost-stable independence relations.

\begin{theorem}\label{star-bound}
If $\dnfb$ is an almost-stable independence relation, then
\[ NT(\mu, \lambda, \kappa) \leq \lambda^{(2^{\kappa_{2}(\dnfb)})} + \kappa^{-}. \]
\end{theorem}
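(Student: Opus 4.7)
The plan is to follow the structure of the proof of Theorem \ref{tp-st}, replacing the single use of Uniqueness by Lemma \ref{sat-small} combined with the tameness provided by Lemma \ref{star-tame}. Set $\theta := \kappa_2(\dnfb)$ and $\chi := \lambda^{(2^\theta)} + \kappa^-$, and suppose for contradiction that $\{p_\alpha : \alpha < \chi^+\} \subseteq \gS(M, \leq \mu)$ for some $M \in \K_\lambda$, with every subfamily of size $\geq \kappa$ inconsistent.

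The key steps are as follows. (1) By local character (and $\kappa_1(\dnfb) \leq \kappa_2(\dnfb) = \theta$), for each $\alpha$ pick $R_\alpha \in [M]^{\leq \theta}$ over which $p_\alpha$ does not fork; a first pigeonhole among the at most $\lambda^\theta$ possibilities gives $S_0 \subseteq \chi^+$ of size $\chi^+$ and a fixed $R^*$ with $R_\alpha = R^*$ for every $\alpha \in S_0$. (2) Using the remark after Lemma \ref{sat-small} together with $(2^\theta)^\theta = 2^\theta$, pass to a $\theta^+$-model-homogeneous extension $R^{**} \gea R^*$ of size $2^\theta$, and fix $M^* \gea M, R^{**}$ amalgamating the two over $R^*$. (3) For each $\alpha \in S_0$, use the existence axiom to extend $p_\alpha$ to $q_\alpha \in \gS(M^*)$ not forking over $\dom(p_\alpha)$; transitivity then gives non-forking of $q_\alpha$ over $R^*$, and base monotonicity applied to $R^* \lea R^{**} \lea M^*$ upgrades this to non-forking over $R^{**}$. (4) A second pigeonhole, using the bound $|\gS(R^{**})| \leq 2^{2^\theta} \leq \chi$ from Proposition \ref{first-b}, produces $S_1 \subseteq S_0$ of size $\chi^+$ on which all $q_\alpha\rest_{R^{**}}$ coincide. (5) Apply Lemma \ref{sat-small} with base $R^{**}$, ambient $M^*$, $\mu = \theta$, $\kappa = 2$, and $I_0 = \{0\}$: for any $\alpha, \beta \in S_1$ the extensions $q_\alpha, q_\beta$ agree on every $A \in \mathcal{P}_{\leq \theta}(M^*)$, and by $\theta$-tameness of $\K$ for $1$-types (Lemma \ref{star-tame}) we conclude $q_\alpha = q_\beta$. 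A single realization of this common type then realizes $p_\alpha$ for all $\alpha \in S_1$, contradicting $|S_1| \geq \kappa$ together with the inconsistency hypothesis.

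The main obstacle compared with Theorem \ref{tp-st} is the need to inflate the local-character witness $R^*$ to the $\theta^+$-model-homogeneous $R^{**}$ before one can invoke the weak-uniqueness Lemma \ref{sat-small}. This inflation is precisely the source of the exponent $2^{\kappa_2(\dnfb)}$ in the bound, and it has to be absorbed in the routine arithmetic $\lambda^\theta \cdot 2^{2^\theta} \leq \lambda^{(2^\theta)}$; once $R^{**}$ is in place the two-stage pigeonhole proceeds in direct analogy with the stable case.
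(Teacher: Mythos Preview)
Your proof is correct and follows essentially the same strategy as the paper's: pigeonhole to fix a local-character witness, inflate it to a $\theta^+$-model-homogeneous model, pigeonhole again on restrictions, and then combine Lemma~\ref{sat-small} with the tameness of Lemma~\ref{star-tame} to identify the extensions. The only cosmetic difference is the order of operations: the paper first passes to a large $(2^{\lambda_0})^+$-model-homogeneous $M' \gea M$, extends the $p_\alpha$ there, and builds the model-homogeneous witnesses $N_\alpha$ \emph{inside} $M'$, whereas you pigeonhole on $R_\alpha$ first and build a single $R^{**}$ externally before amalgamating with $M$---your arrangement is slightly cleaner since it avoids the auxiliary $M'$.
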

\begin{proof}
Let $\lambda_0= \kappa_{2} (\dnfb)$, $\chi = \lambda^{2^{\lambda_0}} + \kappa^{-}$ and $\{ p_\alpha :\alpha < \chi^+ \} \subseteq \gS(M, \leq \mu)$ for $M \in \K_\lambda$. 

Observe that by the above remark there is $M'$ extending $M$ such that $M'$ is $(2^{\lambda_0})^+$-model homogeneous and $\| M' \|= \lambda^{2^{\lambda_0}}$.  For each $\alpha < \chi^+$, fix $q_\alpha \in \gS(M')$ such that $p_\alpha \leq q_\alpha$, this exist by amalgamation. Moreover, given $\alpha < \chi^+$, by local character there is $N \in K_{\lambda_0}$ such that $q_\alpha$ does not fork over $N$. Since $(2^{\lambda_0})^{\lambda_0} = 2^{\lambda_0}$, by the remark above there is $N'$ extending $N$ such that $N'$ is $(\lambda_0^+)$-model homogeneous and $\| N' \|= 2^{\lambda_0}$. Since $M'$ is $(2^{\lambda_0})^+$-model homogeneous, there is  $f: N' \xrightarrow[N]{} M'$. So fix $N_\alpha=f[N']$, realize $N_\alpha \in \K_{2^{\lambda_0}}$, $N_\alpha$ is $(\lambda_0^+)$-model homogeneous and $q_\alpha$ does not fork over $N_\alpha$ by base monotonicity. 

Define $\Phi: \chi^+ \to [M']^{2^{\lambda_0}}$ as $\Phi(\alpha)=N_\alpha$. Then by the pigeonhole principle  there is $N^* \in [M']^{2^{\lambda_0}}$ and $S \subseteq \chi^+$ of cardinality $\chi^+$ such that  $N_\alpha = N^*$ for every $\alpha \in S$. Now define $\Psi: S \to  \gS(N^*)$ as $\Psi(\alpha)= q_\alpha\rest_{N^*}$, since $|\gS(N^*)| \leq 2^{2^{\lambda_0}}$, by the pigeonhole principle  there is $q \in \gS(N^*)$ and $S'\subseteq S$ of size $\chi^+$ such that $q_\alpha\rest_{N^*}=q$ for every $\alpha \in S'$

 Observe that $q_\alpha \geq q \text{ and } q_\alpha \text{ does not fork over }  N^*$ for every $\alpha \in S'$. Then since $N^*$ is $(\lambda_0^+)$-model homogeneous and $\K$ is $\lambda_0$-tame (by Lemma \ref{star-tame}), it follows from Lemma \ref{sat-small} that $q_\alpha = q_\beta$ for every $\alpha, \beta \in S'$.  In particular, $\{p_\alpha : \alpha \in S'\}$ is consistent and $|S'|\geq \kappa$. Hence $NT(\mu, \lambda, \kappa) \leq  \lambda^{2^{\lambda_0}} + \kappa^-$. \end{proof}

The next results show that having an almost-stable independence relation implies that $\K$ is stable and that $\K$ does not have the tree property.

\begin{cor}\label{ntpp}
If $\dnfb$ is an almost-stable independence relation, then $\K$ is stable and $\K$ does not have the tree property.
\end{cor}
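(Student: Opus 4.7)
The plan is to recycle the proofs of Corollary \ref{st-st} and Lemma \ref{ntp}, replacing the bound from Theorem \ref{tp-st} with the weaker bound from Theorem \ref{star-bound}; the only price to pay is that the exponent $\kappa_1(\dnfb)$ becomes $2^{\kappa_2(\dnfb)}$, which forces us to work with slightly larger cardinals when we set up the arithmetic.

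For stability, let $\theta:=2^{\kappa_2(\dnfb)}$ and note that for any infinite $\chi\geq\theta$ the cardinal $\lambda:=\chi^{\theta}$ satisfies $\lambda^{\theta}=\lambda$. Combining Proposition \ref{first-b}.(1) with Theorem \ref{star-bound} applied at $(\lambda,\lambda,2)$ gives $|\gS(M)|\leq NT(\lambda,\lambda,2)\leq \lambda^{\theta}+2^{-}=\lambda$ for every $M\in\K_\lambda$. Hence $\K$ is $\lambda$-Galois-stable for unboundedly many $\lambda$, and so $\K$ is stable.

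For the failure of the tree property, I would mimic the proof of Lemma \ref{ntp} essentially verbatim, with $\mu:=\theta^+$ and $\lambda:=\beth_\mu(\mu)$. Assuming toward a contradiction that $\K$ has the $k$-tree property for some $k<\omega$, the construction in Lemma \ref{ntp} applied at $(\mu,\lambda,k)$ produces a family $\{p_\nu:\nu\in{}^\mu\lambda\}\subseteq\gS(M,\leq\mu)$ with $M\in\K_\lambda$ such that every consistent subfamily indexed by $A\subseteq{}^\mu\lambda$ gives a finitely branching subtree and hence $|A|\leq 2^\mu$; combined with $\lambda^\mu\geq\lambda^+$ (by König's Lemma, since $\cf(\lambda)=\mu$), this produces $\lambda^+\leq NT(\mu,\lambda,(2^\mu)^+)$. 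On the other hand, Theorem \ref{star-bound} yields $NT(\mu,\lambda,(2^\mu)^+)\leq \lambda^{\theta}+2^\mu$, and a routine Beth-arithmetic check shows $\lambda^{\theta}=\lambda$ (since $\cf(\lambda)=\mu>\theta$ and $\beth_\alpha(\mu)^{\theta}\leq\beth_{\alpha+1}(\mu)<\lambda$ for each $\alpha<\mu$) and $2^\mu\leq\beth_\mu(\mu)=\lambda$. Together these give $\lambda^+\leq\lambda$, a contradiction.

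The main obstacle, and really the only new ingredient over the already-treated stable case, is verifying the arithmetic $\lambda^{\theta}=\lambda$ with $\lambda=\beth_\mu(\mu)$ and $\mu=\theta^+$; this is standard, so the principal work has already been done in Theorem \ref{star-bound}. I expect no other subtleties: once the bound on $NT(\mu,\lambda,(2^\mu)^+)$ is in place, both conclusions are read off exactly as in the stable case.
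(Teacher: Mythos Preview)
Your proposal is correct and follows essentially the same route as the paper: the paper also takes $\mu=(2^{\kappa_2(\dnfb)})^{+}$ and $\lambda=\beth_\mu(\mu)$, invokes the construction of Lemma \ref{ntp} to get $\lambda^+\leq NT(\mu,\lambda,(2^\mu)^+)$, and then applies Theorem \ref{star-bound} together with the arithmetic $\lambda^{2^{\kappa_2(\dnfb)}}+2^\mu=\lambda$ to reach a contradiction. For stability the paper simply says the argument is straightforward; your explicit computation via Proposition \ref{first-b}.(1) and Theorem \ref{star-bound} at $(\lambda,\lambda,2)$ is exactly what is intended.
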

\begin{proof}
We show that $\K$ does not have the tree property by contradiction, the proof that $\K$ is stable is straightforward.
Let $\mu = (2^{\kappa_2(\dnfb)})^+$ and $\lambda=\beth_\mu(\mu)$. Since $\lambda^{<\mu}=\lambda$, the same construction as that of Lemma \ref{ntp} gives us that:
\[ \lambda^+ \leq NT(\mu, \lambda, (2^\mu)^+). \]

On the other hand, by the previous theorem we have that:

\[NT(\mu, \lambda, (2^\mu)^+) \leq \lambda^{2^{\kappa_2(\dnfb)} }+2^\mu= \lambda. \]

Putting together the last two equation we get a contradiction.  \end{proof}

The next result shows that an almost-stable independence relation is close to being a stable independence relation. Recall that $\K^{\mu^+\text{-mh}}$ is the $\mu^+$-AEC (see \cite{mu}) which models are the $\mu^+$-model homogeneous models of $\K$ and which order is the same as that of $\K$.

\begin{lemma}\label{star-sir}
Assume $\K$ is fully $(<\kappa)$-tame and -type-short.
If $\dnfb$ is an almost-stable independence relation and $\mu \geq \kappa_\kappa(\dnfb) + \kappa$, then $\K^{\mu^+\text{-mh}}$ has a stable independence relation. This is precisely the restriction of $\dnfb$ to $\mu^+$-model homogeneous models.
\end{lemma}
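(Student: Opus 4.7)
The plan is to verify each clause of Definition \ref{sta-ind} for the restriction of $\dnfb$ to triples whose base model lies in $\K^{\mu^+\te{-mh}}$. Most of the independence relation axioms---preservation under $\K$-embeddings, monotonicity, normality, base monotonicity, transitivity---transfer verbatim from $\dnfb$, as does symmetry, since each is a property of the shape of forking that does not reference the ambient class of base models. Existence also transfers: given $M \lea N$ both $\mu^+$-model homogeneous and $p \in \gS^{<\infty}(M)$, the extension $q \in \gS^{<\infty}(N)$ produced by existence for $\dnfb$ is automatically a type over the $\mu^+$-model homogeneous model $N$.

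For local character I would take $p \in \gS^\alpha(M)$ with $M$ $\mu^+$-model homogeneous, use local character of $\dnfb$ to find $M_0' \lea M$ of cardinality at most $\kappa_\alpha(\dnfb)$ over which $p$ does not fork, and then build a $\mu^+$-model homogeneous $M_0$ with $M_0' \lea M_0 \lea M$ of size bounded in terms of $\kappa_\alpha(\dnfb)$ and $\mu$ by a standard iterated closure chain exploiting the $\mu^+$-model homogeneity of $M$. Base monotonicity then gives that $p$ does not fork over $M_0$.

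The substantive clause is uniqueness. I would first treat the case of types over the full model $N$: given $M \lea N$ both $\mu^+$-model homogeneous, $p, q \in \gS^\alpha(N)$ not forking over $M$ with $p \rest_M = q \rest_M$, Lemma \ref{sat-small} applies---its hypothesis $\mu \geq \kappa_\kappa(\dnfb)$ is ours---and yields $p^{I_0}\rest_A = q^{I_0}\rest_A$ for every $A \in \mathcal{P}_{<\mu^+}(N)$ and $I_0 \in \mathcal{P}_{<\kappa}(|p|)$. Since $\mu \geq \kappa$, this holds in particular for all $A \in \mathcal{P}_{<\kappa}(N)$, so full $(<\kappa)$-tameness and -type-shortness of $\K$ forces $p = q$. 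For a general set $|M| \subseteq B \subseteq |N|$, reduce to the model case via the closure machinery of \cite[8.2]{lrv}, which derives uniqueness over sets from uniqueness over models in the presence of the other independence relation axioms.

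The main obstacle is the set case of uniqueness: Lemma \ref{sat-small} is formulated only for types over the model $N$, and its proof does not generalize cleanly to types over a subset $B \subseteq |N|$, because the downward L\"owenheim--Skolem closure $L' \supseteq L \cup A$ constructed in that proof need not lie inside $B$, which would be required for the non-forking chain to go through via $\dnfb \subseteq \nes$. Invoking the LRV closure sidesteps this obstruction and is the cleanest way to package the set case.
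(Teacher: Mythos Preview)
Your proposal is correct and matches the paper's approach; the paper's own proof is extremely terse, essentially asserting that uniqueness follows from Lemma~\ref{sat-small} and that local character holds with bound $\kappa_\alpha(\dnfb) + \LS(\K)^\mu$, leaving the details you supply implicit.

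One simplification: for the set case of uniqueness you need not route through the LRV closure machinery. The obstacle you flag---that the model $L'$ built in the proof of Lemma~\ref{sat-small} need not lie inside $B$---dissolves if in that argument you take $C_1 = A$ and $C_2 = f[A]$ rather than $C_1 = L'$ and $C_2 = f[L']$. Concretely: with $A \in \mathcal{P}_{<\mu^+}(B)$ and $L \lea M$ the small base from local character, build any model $L'' \supseteq L \cup A$ of size at most $\mu$ (by L\"owenheim--Skolem in $\ce$) and use $\mu^+$-model homogeneity of $M$ to obtain $f: L'' \to_L M$. Then $A \subseteq B$, $f[A] \subseteq M \subseteq B$, and $\gtp(A/L) = \gtp(f[A]/L)$, so the inclusion $\dnfb \subseteq \nes$ over $B$ (which you do have, since $a\rest_{I_0} \dnfb_L B$ follows from $a \dnfb_M B$ and $a\rest_{I_0} \dnfb_L M$ by monotonicity and transitivity just as in the model case) applies to this pair. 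Since $p\rest_M = q\rest_M$ and $f[A] \subseteq M$, you chain $\gtp(a\rest_{I_0} A/L) = \gtp(a\rest_{I_0} f[A]/L) = \gtp(b\rest_{I_0} f[A]/L) = \gtp(b\rest_{I_0} A/L)$ and finish via full $(<\kappa)$-tameness and -type-shortness.
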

\begin{proof}
A big monster model of $\K$ is a monster model of $\K^{\mu^+\text{-mh}}$. For $M \in \K^{\mu^+\text{-mh}}$, $A,B \subseteq \ce$ define:
\[ A \overline{\dnf^{(*)}}_M B \text{ if and only if }  A \dnfb_M B.\]

We claim that $\overline{\dnf^{(*)}}$ is a stable independence relation in $\K^{\mu^+\text{-mh}}$. It is straightforward to show that it is an independence relation that satisfies symmetry. Uniqueness follows from Lemma \ref{sat-small}. As for local character, we have that given $\alpha$ and $p \in \gS^{\alpha}(M)$ with $M \in \K^{\mu^+\text{-mh}}$ there is $N\in  \K^{\mu^+\text{-mh}}$ such that $p$ does not $\overline{\dnf^{(*)}}$-forks over $N$ and $\| N \| \leq \kappa_\alpha(\dnfb) + \LS(\K)^{\mu}$. 
\end{proof}

We finish this section by showing that the results in this subsection can be used to obtain a new characterization of stability assuming simplicity for first-order theories. In order to present it, let us recall the notion of  non-splitting for first-order theories. A complete type $p$ in $\bar{x}$ does not split over $A$ a subset of the monster model if and only if for every $\bar{a}, \bar{b} \in Dom(p)$ and $\phi(\bar{x}, \bar{y})$ first-order formula, if  $tp(\bar{a}/A)=tp(\bar{b}/A)$, then $\phi(\bar{x}, \bar{a})\in p$ if and only if $\phi(\bar{x}, \bar{b})\in p$. This notion was introduced by Shelah in Definition 2.2 of \cite{sh3}.

\begin{lemma}\label{star-equiv}
Let $T$ be a simple complete first-order theory. The following are equivalent.
\begin{enumerate}
\item $\dnf_M \subseteq \dnf^{(ns)}_M$ for every $M$ model of $T$, where $\dnf$ denotes first-order non-forking and  $\dnf^{(ns)}$ denotes first-order non-splitting.
\item $T$ is stable.
\end{enumerate}
\end{lemma}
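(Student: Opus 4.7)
The plan is to transfer the problem into the first-order AEC $\K = (\Mod(T), \preceq)$, whose Galois-types coincide with syntactic types, and then invoke Corollary \ref{ntpp} of this subsection.

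For the easy direction $(2) \Rightarrow (1)$, I would appeal to the classical fact that in stable first-order theories a non-forking extension of a type over a model is unique, hence invariant under automorphisms fixing the model. Concretely, given $\bar{a} \dnf_M B$, fix $\bar{b}_1, \bar{b}_2 \in B$ with $tp(\bar{b}_1/M) = tp(\bar{b}_2/M)$ and an $M$-automorphism $\sigma$ sending $\bar{b}_1$ to $\bar{b}_2$; then $\sigma(tp(\bar{a}/M\bar{b}_1))$ and $tp(\bar{a}/M\bar{b}_2)$ are both non-forking extensions of $tp(\bar{a}/M)$, so uniqueness forces $tp(\bar{a}\bar{b}_1/M) = tp(\bar{a}\bar{b}_2/M)$, which is exactly first-order non-splitting.

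The main direction $(1) \Rightarrow (2)$ is where the AEC machinery pays off. I would show that first-order non-forking, viewed as a ternary relation on triples $(M, A, B)$ with $M \preceq \ce$, is an almost-stable independence relation on $\K$ in the sense of Definition \ref{star-sim}: the independence axioms and symmetry are standard consequences of simplicity of $T$ by Kim--Pillay; local character follows because in simple $T$ every type does not fork over a subset of size at most $|T|$; and the containment $\dnf \subseteq \nes$ is precisely hypothesis (1), since in the first-order AEC Galois-types coincide with syntactic types, making the AEC definition of explicit non-splitting over a model unwind to ordinary first-order non-splitting. Once these four items are in place, Corollary \ref{ntpp} yields that $\K$ is stable, and since $\lambda$-Galois-stability of $(\Mod(T), \preceq)$ is precisely first-order $\lambda$-stability of $T$, the theory $T$ is stable. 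The main potential snag is the dictionary step between first-order non-splitting and the AEC $\nes$ over models, but this is a direct unwinding of definitions and requires no genuinely new argument.
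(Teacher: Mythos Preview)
Your proposal is correct and follows essentially the same strategy as the paper: both directions coincide in spirit, and for $(1)\Rightarrow(2)$ both arguments rest on Corollary~\ref{ntpp}. There is only a minor framing difference worth noting: the paper phrases the forward direction as ``Lemma~\ref{sat-small}, Theorem~\ref{star-bound} and Corollary~\ref{ntpp} can be carried out if one replaces explicitly non-splitting for non-splitting in complete first-order theories,'' i.e.\ it suggests re-running those proofs inside first-order model theory with the syntactic notion of non-splitting. You instead verify once and for all that first-order non-forking is an almost-stable independence relation on $(\Mod(T),\preceq)$ --- the key point being that explicit non-splitting $\nes$ over a model $M$ unwinds to ordinary first-order non-splitting because Galois-types and syntactic types agree --- and then invoke Corollary~\ref{ntpp} as a black box. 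Your route is arguably cleaner since it avoids repeating any argument; the paper's route sidesteps the dictionary verification. Either way the content is the same, and your identification of the only potential snag (the translation between $\dnf^{(ns)}$ and $\nes$) is exactly right and indeed routine.
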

\begin{proof}
\fbox{$\to$}  Lemma \ref{sat-small}, Theorem \ref{star-bound} and Corollary \ref{ntpp} can be carried out if one replaces explicitly non-splitting for non-splitting in complete first-order theories.

\fbox{$\leftarrow$} Since $T$ is stable, non-forking has uniqueness (stationarity) over models. Under this hypothesis it is easy to show that $\dnf_M \subseteq \dnf^{(ns)}_M$ for every $M$ model of $T$ (a proof is given in \cite[4.2]{bgkv}). \end{proof}

\section{Simple independence relations}

We introduce simple independence relations and begin their study. We bound the possible values of $NT(\text{-},\text{-},\text{-})$ under the existence of a  simple independence relation and as a corollary we are able to show the failure of the $2$-tree property. As in the previous section we are assuming Hypothesis \ref{hyp}.

\begin{defin}\label{simple-ind}
 $\dnfb$ is a \emph{simple independence relation} in $\K$ if the following properties hold:
\begin{enumerate}
\item $\dnfb$ is an independence relation. 
\item (Symmetry)  $A \dnfb_{M}  B$ if and only if $B \dnfb_{M}  A$.
\item (Type-amalgamation) If $p \in \gS^{<\infty}(M)$, $M \subseteq A, B \subseteq  N$ and $A\dnfb_{M}  B$, then for all $q_1 \in \gS^{<\infty}(A; \ce), q_2 \in \gS^{<\infty}(B ; \ce)$ and $N^*\supseteq A, B$ such that $q_1,q_2 \geq p$ and $q_1, q_2$ do not fork over $M$, there exists $q \in \gS^{<\infty}(N^*)$ such that $q\geq q_1, q_2$ and $q$ does not fork over $M$. 
\item (Uniform local character) There exists $\theta$ and $\lambda$ cardinals such that: If $p \in \gS^{\alpha}(M)$, then there exists $M_0 \lea M$ with $|| M_0 || \leq \lambda + \alpha^{<\theta}$ and $p$ does not fork over $M_0$.  Recall that $(\kappa(\dnfb), \ell(\dnfb))$ are the least $(\lambda, \theta)$ with such a property. 
\end{enumerate}

\end{defin}

\begin{remark}
Let $T$ be a complete first-order theory. If $T$ is simple and $\dnfb$ is first-order non-forking, then $\dnfb$ is a simple independence relation. 
\end{remark}

\begin{remark}
The only difference between  stable independence relations and simple independence relations are conditions (3) and (4). As for (3), while we assume uniqueness in stable independence relations, we only assume type-amalgamation in simple independence relations. Although this may seem like a minor change, based on our knowledge of forking in first-order theories this is actually a significant one. As for (4), this is a minor change and we give natural conditions under which local character implies uniform local character (see Fact \ref{lc} and Corollary \ref{lc2}). 

\end{remark}

The next strengthening of the witness property is the key property to show that stable independence relations are simple independence relations if the AEC is tame and type-short.
\begin{defin}
Let $\dnfb$ be an independence relation.
$\dnfb$ has the \emph{$(<\theta)$-strong witness property} if for all $M \lea N$, $\alpha$ ordinals, and $\bb \in \ce^{\alpha}$: $\bb \dnfb_M  N$ if and only if $\bb\rest_{I} \dnfb_M A$ for every  $ A \in \mathcal{P}_{<\theta}( N)$ and $I \in \mathcal{P}_{<\theta}( \alpha)$.
\end{defin}

The proof of the following fact is the same as that of \cite[8.10]{lrv}, since the hypothesis are slightly different and the proof is short we repeat the argument for the convenience of the reader.

\begin{fact}\label{lc} Let $\dnfb$ be an independence relation.
If $\dnfb$ has local character and the $(< \theta)$-strong witness property, then $\dnfb$ has uniform local character.
\end{fact}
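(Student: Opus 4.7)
The plan is to combine the two hypotheses by applying local character separately to each short subtuple and then assembling the witnesses, with the strong witness property closing the argument.

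First I would fix the $\theta$ given by the $(<\theta)$-strong witness property and set $\lambda_0 := \sup_{\beta < \theta} \kappa_\beta(\dnfb)$, which is a cardinal because local character gives each $\kappa_\beta(\dnfb)$. I claim that $(\lambda_0 + \LS(\K), \theta)$ witnesses uniform local character. Take any $p = \gtp(\bar a / M) \in \gS^\alpha(M)$. For each $I \in \mathcal{P}_{<\theta}(\alpha)$, the type $\gtp(\bar a \rest_I / M)$ has length $|I| < \theta$, so by local character there is $M_I \lea M$ with $\|M_I\| \leq \lambda_0$ such that $\bar a \rest_I \dnfb_{M_I} M$. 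The number of such subsets $I$ is at most $\alpha^{<\theta}$.

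Next I would apply downward L\"owenheim--Skolem inside $M$ to the set $\bigcup_{I \in \mathcal{P}_{<\theta}(\alpha)} |M_I|$ to obtain $M_0 \lea M$ containing every $M_I$, with $\|M_0\| \leq \lambda_0 + \alpha^{<\theta} + \LS(\K)$. Since $M_I \lea M_0 \lea M$, base monotonicity upgrades each $\bar a \rest_I \dnfb_{M_I} M$ to $\bar a \rest_I \dnfb_{M_0} M$. Then by monotonicity, $\bar a \rest_I \dnfb_{M_0} A$ for every $A \in \mathcal{P}_{<\theta}(M)$ and every $I \in \mathcal{P}_{<\theta}(\alpha)$. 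Invoking the $(<\theta)$-strong witness property (the nontrivial direction), this gives $\bar a \dnfb_{M_0} M$, i.e.\ $p$ does not fork over $M_0$.

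The main conceptual step is recognizing that the strong witness property is exactly what is needed to turn a single universal bound on the size of submodels witnessing non-forking of short subtuples into a bound for the full tuple, and that the multiplicative blow-up is controlled by $\alpha^{<\theta}$, which matches the form $\lambda + \alpha^{<\theta}$ demanded by uniform local character. There are no serious obstacles beyond being careful that base monotonicity applies (which requires $M_0 \lea M$, guaranteed by the L\"owenheim--Skolem step) and that the uniform bound $\lambda_0$ over lengths $< \theta$ is indeed a cardinal.
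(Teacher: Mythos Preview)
Your proposal is correct and follows essentially the same approach as the paper's proof: define $\lambda_0 = \sup_{\beta<\theta}\kappa_\beta(\dnfb)$, collect witnesses $M_I$ for each short subtuple, take their union inside $M$ via L\"owenheim--Skolem, and invoke the strong witness property. You are slightly more explicit than the paper in naming base monotonicity as the step that passes from $\bar a\rest_I \dnfb_{M_I} M$ to $\bar a\rest_I \dnfb_{M_0} M$ (the paper just says ``monotonicity''), and your added $+\LS(\K)$ in the bound is harmless but redundant since each $\kappa_\beta(\dnfb)\geq\LS(\K)$.
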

\begin{proof}
Since $\dnfb$ has local character, for each $\alpha < \theta$ we have that $\kappa_{\alpha}(\dnfb) < \infty$. Let $\lambda_0=sup\{ \kappa_{\alpha}(\dnfb) : \alpha < \theta \}$. We show that the pair $(\lambda_0, \theta)$ is a witness for uniform local character.

Let $M \in \K$ and $p=\gtp(\bb/M) \in \gS^\beta(M)$. For each $I \subseteq \beta$ with $|I|< \theta$, let $M_I \in [M]^{\lambda_0}$ such that $\bb\rest_{I} \dnfb_{M_I} M$, this exists by the choice of $\lambda_0$. Let $A = \bigcup_{I\subseteq \beta, |I| < \theta} M_I$ and $M_0$ be the structure obtained by applying downward L\"{o}wenheim-Skolem in $M$ to $A$. Observe that $\| M_0 \| \leq \lambda_0 + \beta^{<\theta}$
 and the $(< \theta)$-strong witness property together with monotonicity imply that $\bb \dnfb_{M_0} M$.
\end{proof}

The next lemma gives a condition under which a stable independence relation is a simple independence relation.

\begin{lemma}\label{st-simple} 
If $\dnfb$ is a stable independence relation that has the $(< \theta)$-strong witness property, then $\dnfb$ is a simple independence relation.
\end{lemma}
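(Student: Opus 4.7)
The plan is to verify each of the four axioms of a simple independence relation in Definition \ref{simple-ind} for $\dnfb$. Axioms (1) and (2) are inherited verbatim: a stable independence relation is by definition an independence relation and satisfies symmetry. Axiom (4), uniform local character, is immediate from Fact \ref{lc}, since stability gives $\kappa_\alpha(\dnfb)<\infty$ for every $\alpha$ (local character), and combining this with the $(<\theta)$-strong witness property yields uniform local character with parameters $(\sup_{\alpha<\theta}\kappa_\alpha(\dnfb),\theta)$.

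The main work is therefore establishing type-amalgamation (axiom (3)). Suppose $p\in\gS^{<\infty}(M)$, $M\subseteq A,B\subseteq N$, $A\dnfb_M B$, and $q_1\in\gS^{<\infty}(A)$, $q_2\in\gS^{<\infty}(B)$ both extend $p$ and do not fork over $M$. Since $q_1,q_2$ are extensions of $p$, they all have the same length. Apply the existence axiom to $q_1$ to obtain $q\in\gS^{<\infty}(N^*)$ with $q\geq q_1$ and $q$ not forking over $A$; then by transitivity (using $q_1$ does not fork over $M$) conclude $q$ does not fork over $M$. It remains to verify $q\geq q_2$.

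To do so, compare $q\rest_B$ with $q_2$: both are types over $B$ of the same length as $p$; by monotonicity $q\rest_B$ does not fork over $M$, while $q_2$ does not fork over $M$ by hypothesis; and their restrictions to $M$ agree, since $q\rest_M=q_1\rest_M=p=q_2\rest_M$. Uniqueness (from the stable independence relation) then forces $q\rest_B=q_2$, i.e., $q\geq q_2$, completing type-amalgamation.

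I do not anticipate a serious obstacle: the argument is essentially the observation that uniqueness trivializes type-amalgamation, and the hypothesis $A\dnfb_M B$ turns out not to be needed in the stable setting. The only mild subtlety is keeping track of the lengths of the tuples so that the uniqueness axiom applies as stated, but this is immediate from the fact that $q_1,q_2$ both extend $p$.
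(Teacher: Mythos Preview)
Your handling of axioms (1), (2), and (4) is fine and matches the paper. The gap is in your type-amalgamation argument: you write ``apply the existence axiom to $q_1$ to obtain $q\in\gS^{<\infty}(N^*)$ with $q\geq q_1$ and $q$ not forking over $A$; then by transitivity \ldots\ $q$ does not fork over $M$.'' But in the definition of an independence relation, both existence and transitivity require the base (respectively, the intermediate object) to be a \emph{model} in $\K$: existence reads ``if $M\lea N$ and $p\in\gS^{<\infty}(M)$ \ldots'', and transitivity reads ``if $M\lea N$, $A\dnfb_M N$ and $A\dnfb_N B$ \ldots''. Here $A$ is only a subset with $M\subseteq A\subseteq N^*$, so you cannot invoke existence over $A$, nor can you run transitivity through $A$.

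The paper avoids this by applying existence directly to $p\in\gS^{<\infty}(M)$ (legitimate, since $M$ is a model and $M\lea N^*$) to get $q\in\gS^{<\infty}(N^*)$ with $q\geq p$ and $q$ not forking over $M$. Then uniqueness is used \emph{twice}: once to show $q\rest_A=q_1$ (both are nonforking extensions of $p$ to $A$), and once to show $q\rest_B=q_2$. Your idea of getting $q\geq q_1$ ``for free'' by construction and only invoking uniqueness once is exactly what fails; the repair is to build $q$ over the model $M$ and pay for $q\geq q_1$ with a second application of uniqueness.
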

\begin{proof}
We only need to check properties (3) and (4). As for (4), this follows from Fact \ref{lc}. So we only need to show the type-amalgamation property.

Let $p \in \gS^{<\infty}(M)$, $M \subseteq A, B \subseteq \ce$, $A\dnfb_{M}  B$, $q_1 \in \gS^{<\infty}(A ; \ce)$ and $q_2 \in \gS^{<\infty}(B; \ce)$ and $N^* \supseteq A, B$ such that $q_1,q_2 \geq p$ and $q_1, q_2$ do not fork over $M$. Since $p \in \gS^{<\infty}(M)$ and $M \lea N^*$, by the extension property there is $q \in \gS^{<\infty}( N^*)$ such that $q \geq p$ and $q$ does not fork over $M$.

Observe that $q\rest_{A}, q_1 \in \gS^{<\infty}(A, \ce)$,   $q\rest_{A}, q_1 $ do not fork over $M$ and $(q\rest_{A})\rest_{M} = p = q_1\rest_{M}$, then by the uniqueness property ((3) of Definition \ref{sta-ind}) we have that $q\rest_{A}= q_1$. Hence $q_1 \leq q$. One can similarly show that $q\rest_{B} = q_2$. 

Therefore, $q \geq q_1, q_2$ and $q$ does not fork over $M$.  \end{proof}

The next assertion gives a natural assumption on $\K$ that implies the $(<\theta)$-strong witness property. The proof is similar to that of \cite[8.8]{lrv}, but we obtain a stronger result.

\begin{fact}\label{lc3} 
If $\K$ is fully $(<\theta)$-tame and -type-short and $\dnfb$ is a stable independence relation, then $\dnfb $ has the $(<\theta)$-strong witness property. 
\end{fact}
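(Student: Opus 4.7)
The plan is to prove the forward direction by monotonicity and the backward direction by a standard uniqueness argument powered by full tameness and type-shortness, mirroring the structure of \cite[8.8]{lrv}.

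First, the easy direction: if $\bb \dnfb_M N$, then for any $A \in \mathcal{P}_{<\theta}(N)$ and $I \in \mathcal{P}_{<\theta}(\alpha)$, monotonicity immediately gives $\bb\rest_I \dnfb_M A$.

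For the hard direction, assume $\bb\rest_I \dnfb_M A$ for every $A \in \mathcal{P}_{<\theta}(N)$ and $I \in \mathcal{P}_{<\theta}(\alpha)$. By existence, let $q \in \gS^{\alpha}(N)$ extend $\gtp(\bb/M)$ and not fork over $M$; write $q = \gtp(\bb'/N)$ for some $\bb' \in \ce^{\alpha}$. The goal is to show $\gtp(\bb/N) = q$, since then $\bb \dnfb_M N$ because $q$ does not fork over $M$. By full $(<\theta)$-tameness and -type-shortness, it suffices to check that for every $A \in \mathcal{P}_{<\theta}(N)$ and $I \in \mathcal{P}_{<\theta}(\alpha)$,
\[ \gtp(\bb\rest_I / A) = \gtp(\bb'\rest_I / A) = q^{I}\rest_A. \]

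Now both types in this display do not fork over $M$: the left-hand side by our hypothesis that $\bb\rest_I \dnfb_M A$ (together with normality), and the right-hand side by monotonicity from the fact that $q$ does not fork over $M$. Furthermore, their restrictions to $M$ agree: since $\bb$ and $\bb'$ have the same type over $M$, we have
\[ \gtp(\bb\rest_I / M) = \gtp(\bb'\rest_I / M). \]
Uniqueness (Definition \ref{sta-ind}.(3)) applied with base $M$ and domain $A \cup M$ therefore forces $\gtp(\bb\rest_I/A) = \gtp(\bb'\rest_I/A)$, as required.

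The only subtle point is confirming that the uniqueness axiom as stated applies with base $M$ and domain $A$ (possibly with $|M|$ not contained in $A$), but normality lets us enlarge both sides by $M$ without changing anything, so this is routine. I expect no serious obstacle: the argument is essentially the observation that full tameness and type-shortness reduce equality of long types over large models to equality of short types over small subsets, which is exactly what uniqueness over $M$ delivers once existence provides a candidate non-forking extension.
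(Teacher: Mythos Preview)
Your proof is correct and follows essentially the same approach as the paper: produce a non-forking extension $q$ of $\gtp(\bb/M)$ via existence, then use full $(<\theta)$-tameness and -type-shortness together with uniqueness to conclude $\gtp(\bb/N) = q$. The paper's proof is simply a terser sketch of the same argument, and your added detail about using normality to enlarge the domain to $A \cup M$ so that the uniqueness axiom applies is exactly the right way to fill in the one point the paper leaves implicit.
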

\begin{proof}
Let $M \lea N$ and $\bb \in \ce^{\alpha}$ such that $\bb\rest_{I} \dnfb_M A$ for every  $ A \in \mathcal{P}_{<\theta}( N)$ and $I \in \mathcal{P}_{<\theta}( \alpha)$. Let $p = \gtp(\bb/N)$ and $q \in \gS(N)$ such that $q$ does not fork over $M$ and $q$ extends $p\rest_M$, $q$ exists because of the extension property. Using that $\K$ is fully $(<\theta)$-tame and -type-short together with the uniqueness property one can show that $p = q$. As $q$ does not fork over $M$ by construction, it follows that $\bb \dnfb_M N$.
\end{proof}

\begin{cor}\label{lc2}
If  $\K$ is fully $(<\theta)$-tame and -type-short and $\dnfb$ is a stable independence relation, then $\dnfb$ is a simple independence relation.
\end{cor}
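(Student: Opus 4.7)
The plan is to simply chain together the two results that immediately precede the corollary. Since $\K$ is fully $(<\theta)$-tame and -type-short and $\dnfb$ is a stable independence relation, Fact \ref{lc3} gives us that $\dnfb$ has the $(<\theta)$-strong witness property. Now we have exactly the hypothesis of Lemma \ref{st-simple}: a stable independence relation with the $(<\theta)$-strong witness property. Applying that lemma immediately yields that $\dnfb$ is a simple independence relation, which is what we wanted.

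There is no real obstacle here; this is a one-line corollary obtained by concatenation. The only thing worth being careful about is that the two preceding results share the same parameter $\theta$, so the invocation is clean: the $\theta$ for which tameness/type-shortness holds is the same $\theta$ witnessing the strong witness property produced by Fact \ref{lc3}, which is the same $\theta$ needed as input to Lemma \ref{st-simple}. In particular, the symmetry, independence-relation axioms and uniqueness coming from the stability assumption are used inside the invocations of the two preceding results, and we do not need to re-verify them here.
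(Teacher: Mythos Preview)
Your proof is correct and matches the paper's intended argument exactly: the corollary is left unproven in the paper precisely because it is the immediate concatenation of Fact~\ref{lc3} and Lemma~\ref{st-simple}, as you describe.
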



The next technical proposition is important as it shows that even when we are considering independence relations over sets in some sense models are ubiquitous

\begin{prop}\label{close}  Let $\dnfb$ be a simple independence relation.
If $A \dnfb_M B$, then there is $M^* \in \K$ with $B \cup M \subseteq M^*$ and $A \dnfb_M M^*$. 
\end{prop}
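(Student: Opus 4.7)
The plan is to produce $M^*$ in four steps: (i) build a model $M_1 \in \K$ with $M \lea M_1$ and $M_1 \dnfb_M B$; (ii) amalgamate $M_1$ with $B$ inside a single model $M^*_0$; (iii) use the type-amalgamation axiom over $M$ with sides $M_1$ and $B \cup M$ to fuse a non-forking extension of $\gtp(A/M)$ to $M_1$ with $\gtp(A/B \cup M)$ into a common non-forking type over $M^*_0$; and (iv) transport the resulting configuration by an automorphism fixing $B \cup M$ so that the realizer becomes $A$ itself.

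For step (i), I would pick any $N \in \K$ with $M \lea N$ and $B \subseteq N$ (using amalgamation, no maximal models, and L\"owenheim--Skolem inside $\ce$), together with any $M_1^0 \in \K$ such that $M \lea M_1^0$. Applying existence to $\gtp(M_1^0/M)$ along $M \lea N$ produces a realizer $M_1$ satisfying $\gtp(M_1/M) = \gtp(M_1^0/M)$ and $M_1 \dnfb_M N$. The equality of Galois-types gives an automorphism of $\ce$ fixing $M$ and sending $M_1^0$ to $M_1$, which certifies $M \lea M_1$; monotonicity then yields $M_1 \dnfb_M B$. For step (ii), take any $M^*_0 \in \K$ with $M_1 \cup B \subseteq M^*_0$.

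For step (iii), set $p := \gtp(A/M)$, let $q_1 \in \gS^{<\infty}(M_1)$ be a non-forking extension of $p$ (existence), and set $q_2 := \gtp(A/B \cup M)$; this extends $p$, and it does not fork over $M$ by the hypothesis $A \dnfb_M B$ together with normality. Since $M_1 \dnfb_M B \cup M$ by normality, the type-amalgamation axiom applied with base $M$, sides $M_1$ and $B \cup M$, types $q_1, q_2$, and ambient model $M^*_0$ produces $q \in \gS^{<\infty}(M^*_0)$ extending both $q_1, q_2$ and not forking over $M$. For step (iv), pick $A^*$ realizing $q$; since $q \rest_{B \cup M} = q_2 = \gtp(A/B \cup M)$, there is $g \in \operatorname{Aut}_{B \cup M}(\ce)$ with $g(A^*) = A$. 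Set $M^* := g[M^*_0]$: then $B \cup M \subseteq M^*$ because $g$ fixes $B \cup M$ pointwise, and $A \dnfb_M M^*$ follows by preservation under $\K$-embeddings applied to $A^* \dnfb_M M^*_0$ (which in turn comes from $q$ not forking over $M$).

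The main obstacle is step (i): type-amalgamation demands two sides that are already independent over $M$, but the hypothesis supplies independence only between $A$ and $B$, with no prescribed model extension of $M$ available to play the role of one of the sides. Fabricating such an extension using existence, while preserving the $\K$-relation $M \lea M_1$ by means of a Galois-type equality, is the crux of the construction; once that is in place, type-amalgamation and an automorphism of $\ce$ do the rest.
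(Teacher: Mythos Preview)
Your argument is correct, but it is more elaborate than necessary, and the ``main obstacle'' you flag is in fact not an obstacle at all. The paper's proof takes the second side to be $M$ itself: it sets $q_1 = \gtp(A/M\cup B)$ (non-forking over $M$ by hypothesis plus normality) and $q_2 = \gtp(A/M) = p$ (trivially non-forking over $M$, since existence with $N=M$ shows every type over $M$ does not fork over $M$, hence $(M\cup B)\dnfb_M M$). Type-amalgamation then yields $r\in\gS^{<\infty}(M')$ for any $M'\supseteq M\cup B$, and the automorphism step is identical to your step (iv). In other words, in your framework one may simply take $M_1 = M$; the entire construction of a proper $M_1$ via existence and a Galois-type argument in your step (i) is superfluous. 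What you gain from your detour is nothing beyond what the paper already obtains, while the paper's version avoids the bookkeeping of enumerating $M_1^0$, realizing a non-forking extension, and verifying $M\lea M_1$ via an automorphism.
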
 
\begin{proof} Assume $A \dnfb_M B$. By normality and monotonicity we can conclude that $A \dnfb_M B\cup M$. Let $M' \in \K$ the structure obtained by applying downward L\"{o}wenheim-Skolem in $\ce$ to $M \cup B \subseteq M'$.

Consider $p = \gtp(A/M)$, $q_1= \gtp(A/M \cup B)$ and $q_2 = \gtp(A/M)$. Observe that $p\leq q_1, q_2$, $q_1 \in \gS^{< \infty}(M \cup B; \ce)$ does not fork over $M$, $q_2 \in \gS^{< \infty}(M)$ does not fork over $M$, $M \subseteq M\cup B, M \subseteq M'$ and $M\cup B \dnfb_M M$. Recognize that $p,q_1, q_2$ and $M \subseteq M,M\cup B \subseteq M'$ satisfy the hypothesis of the type-amalgamation property, then there is $r \in \gS^{< \infty}(M') \geq q_1, q_2$ such that $r$  does not fork over $M$.

Suppose that $r=\gtp(A'/M')$, since $r \geq q_1$ there is $f \in Aut_{M\cup B}(\ce)$  such that $f[A']=A$. Since $r$ does not fork over $M$, we have that $A' \dnfb_M M'$. Then by invariance $f[A'] \dnfb_{f[M]} f[M']$. Observe $f[A']=A$, $f[M]= M$, so $A \dnfb_M f[M']$. Finally, realize that $M \cup B \subseteq f[M']$, hence $M^* :=f[M']$ satisfies what is needed. 
\end{proof}

The following notion generalizes the chain condition introduced in \cite[2.3]{les}.
\begin{defin}\label{dBound}
Let $\iota$ be an infinite cardinal. We say $\dnfb$ has the $\iota$-bound condition if: $\forall \lambda \in[\kappa(\dnfb), \infty) \forall M \in \K_\lambda \forall \kappa \in [\LS(\K),\lambda] \forall p \in \gS(M, \kappa) \forall \mu \in [\kappa(\dnfb) + \kappa, \lambda]$( If $\mu^{<\ell(\dnfb)} =\mu$ and $\{ p_\alpha : \alpha < (2^\mu)^+ \} \subseteq \gS(M, \leq \mu)$ are such that $p_\alpha \text{ is a non-forking extension of } p$ for every $\alpha < (2^\mu)^+$, then there are $A \subseteq   (2^\mu)^+$  and $q$ a type such that $|A|=\iota$ and $q$ is an extension of $p_\alpha$ for every $\alpha \in A$ ). Moreover, we say that $\dnfb$ has the strong $\iota$-bound condition if the type $q$ is a non-forking extension of $p$.
\end{defin}

The following is a generalization of \cite[2.4]{les}, which is based on an argument of Shelah which appeared in \cite[4.9]{gil}. Compared to \cite[2.4]{les}, instead of showing that two types are comparable we show that countably many types are comparable,  \cite[2.5]{les} mentions that this can be done in the first-order case. We have decided to present the argument to show that it does come through in this more general setting and because we will extend it in Lemma \ref{bound_cond}.

\begin{lemma}\label{chain}
If $\dnfb$ is a simple independence relation, then $\dnfb$ has the $\aleph_0$-bound condition.\footnote{Symmetry is not used to obtain this result.}
\end{lemma}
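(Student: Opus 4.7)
The plan is to build, by induction on $n<\omega$, distinct indices $\alpha_n \in (2^\mu)^+$, an increasing chain of models $M_0^* \lea M_1^* \lea \cdots$ with $\|M_n^*\| \leq \mu$ and $N_p \lea M_0^*$ (where $N_p := \mathrm{dom}(p)$), and an increasing chain of Galois-types $q_n \in \gS(M_n^*)$ such that $q_n$ is a non-forking extension of $p$ extending $p_{\alpha_i}$ for every $i<n$. Simultaneously we maintain a nested sequence $A_n \subseteq (2^\mu)^+$ of size $(2^\mu)^+$ with the invariant: \emph{for every $\alpha \in A_n$, there is a non-forking extension $r_\alpha^{(n)} \in \gS^{<\infty}(N_\alpha^{(n)})$ of $p_\alpha$ over some model $N_\alpha^{(n)} \supseteq M_n^* \cup M_\alpha$ of size $\leq \mu$ such that $r_\alpha^{(n)}$ also extends $q_n$}. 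Once the induction is finished, $q := \bigcup_n q_n$ is a Galois-type over $M^* := \bigcup_n M_n^*$ realized by some $b \in \ce$, and since each $q_n$ extends $p_{\alpha_n}$, $b$ realizes every $p_{\alpha_n}$; hence $A := \{\alpha_n : n<\omega\}$ together with $q$ witnesses the $\aleph_0$-bound condition.

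For the base case, set $M_0^* := N_p$, $q_0 := p$, $A_0 := (2^\mu)^+$, and $r_\alpha^{(0)} := p_\alpha$ (where we may assume WLOG $N_p \lea M_\alpha$ for every $\alpha$). For the induction step $n \to n+1$, fix an auxiliary model $M_n^\sharp \supseteq M_n^*$ of size $\leq \mu$. For each $\alpha \in A_n$, use the existence property together with amalgamation inside $\ce$ to extend $r_\alpha^{(n)}$ to a non-forking extension over a model containing $N_\alpha^{(n)} \cup M_n^\sharp$; let $u_\alpha \in \gS(M_n^\sharp)$ be its restriction to $M_n^\sharp$. Each $u_\alpha$ extends $q_n$, and since $|\gS(M_n^\sharp)| \leq 2^\mu$, the pigeonhole principle gives $A_n^\star \subseteq A_n$ of size $(2^\mu)^+$ on which $u_\alpha = u^*$ is constant. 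Pick $\alpha_n \in A_n^\star$, let $M_{n+1}^*$ be any model of size $\leq \mu$ with $M_n^\sharp \cup M_{\alpha_n} \subseteq M_{n+1}^* \lea \ce$ (using downward L\"owenheim--Skolem and Proposition \ref{close}), and let $q_{n+1}$ be the restriction to $M_{n+1}^*$ of the non-forking extension produced for $\alpha_n$; then $q_{n+1}$ extends $q_n$ (through $u^*$), extends $p_{\alpha_n}$, and is non-forking over $N_p$ by transitivity. For the invariant at stage $n+1$, further extend, for each $\alpha \in A_n^\star$, the non-forking extension built for $\alpha$ to a non-forking extension $r_\alpha^{(n+1)}$ over a model $N_\alpha^{(n+1)} \supseteq M_{n+1}^*$; a further pigeonhole on the restriction to $M_{n+1}^*$ (still $\leq 2^\mu$ options) yields $A_{n+1} \subseteq A_n^\star$ of size $(2^\mu)^+$ with a common restriction $v^*$.

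The main obstacle lies in ensuring $v^* = q_{n+1}$, which is precisely what makes the invariant propagate: because the simple (as opposed to stable) setting lacks uniqueness of non-forking extensions, the value $v^*$ identified by the second pigeonhole need not a priori match the chosen $q_{n+1}$. This is handled by threading a preliminary pigeonhole into the choice of $\alpha_n$: the $(2^\mu)^+$ admissible indices in $A_n^\star$ determine only at most $2^\mu$ different candidate Galois-types for the restriction of their extension to the putative $M_{n+1}^*$, so pigeonhole identifies a dominant value, and one selects $\alpha_n$ so that $q_{n+1}$ is forced to coincide with $v^*$. The hypotheses $\mu^{<\ell(\dnfb)} = \mu$ and $\mu \geq \kappa(\dnfb) + \kappa$ together with uniform local character guarantee that all auxiliary models remain of size $\leq \mu$ throughout, preserving the pigeonhole bounds at every stage. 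This mirrors the Shelah-style construction used by Casanovas in \cite[4.9]{gil} and Lessmann in \cite[2.4]{les}, adapted to the AEC setting where Galois-types over models replace syntactic types.
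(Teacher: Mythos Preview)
Your sketch has a genuine gap: you never invoke the \emph{type-amalgamation} property, which is the only axiom that distinguishes a simple independence relation from an arbitrary one (with uniform local character). Pigeonhole alone cannot force two non-forking extensions of $p$ over different domains to have a common extension; that is exactly what type-amalgamation is for. Concretely, the circularity you flag in your last paragraph is fatal and is not resolved by your proposed fix: the model $M_{n+1}^*$ contains $M_{\alpha_n}$ and hence depends on the choice of $\alpha_n$, so the ``second pigeonhole'' on restrictions to $M_{n+1}^*$ can only be performed \emph{after} $\alpha_n$ is fixed. You therefore cannot ``select $\alpha_n$ so that $q_{n+1}$ is forced to coincide with $v^*$,'' because $v^*$ is only defined once $\alpha_n$ (and hence $M_{n+1}^*$) is chosen. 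Without uniqueness there is no reason the dominant value $v^*$ obtained after the fact should equal $q_{n+1}$.

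The paper's argument supplies the missing ingredient by first manufacturing \emph{independence between the domains}, which is what makes type-amalgamation applicable. One builds a long increasing continuous chain $\{M_\alpha : \alpha < (2^\mu)^+\}$ of models of size $2^\mu$ with $N_\alpha \lea M_{\alpha+1}$, and applies Fodor's lemma (using uniform local character and $\mu^{<\ell(\dnfb)}=\mu$) to the regressive map $\alpha \mapsto \min\{\beta : \gtp(N_\alpha/M_\alpha) \text{ does not fork over } M_\beta\}$ on the club-many $\alpha$ of cofinality $\mu^+$. After two further pigeonholes one obtains a small $R^*$ and a stationary $S$ such that $N_\alpha \dnfb_{R^*} M_\alpha$ for all $\alpha \in S$, and all the (extended) $q_\alpha$ agree on $R^*$. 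Now choosing increasing $\alpha_0 < \alpha_1 < \cdots$ in $S$, at stage $n+1$ one has $N_{\alpha_{n+1}} \dnfb_{R^*} M_{\alpha_n+1}$, and type-amalgamation lets one amalgamate $r_n \in \gS(M_{\alpha_n+1})$ with $q_{\alpha_{n+1}}\rest_{N_{\alpha_{n+1}} \cup R^*}$ over their common restriction to $R^*$ into $r_{n+1} \in \gS(M_{\alpha_{n+1}+1})$. The Fodor step is what your argument is missing; it is precisely how the Shelah/Lessmann proofs you cite proceed in the first-order setting.
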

\begin{proof}
Let $\lambda, \mu, \kappa \in Car$, $M \in \K_\lambda$, $R \in [M]^{\kappa}$, $p \in \gS(R)$ and $\{p_\alpha \in \gS(N_\alpha) : \alpha < (2^\mu)^+\} \subseteq \gS(M, \leq \mu)$ be as in the definition of the $\aleph_0$-bound condition. By extension and transitivity, we may assume that all $N_\alpha$ have size $\mu$. 

We build $\{ M_\alpha: \alpha < (2^\mu)^+ \}$ strictly increasing and continuous chain such that:
\begin{enumerate}
\item $\forall \alpha \in  (2^\mu)^+( M_\alpha \in \K_{2^\mu})$.
\item $R \lea M_0$.
\item $\forall \alpha \in  (2^\mu)^+( N_\alpha \lea M_{\alpha +1} )$
\end{enumerate}

Let $S = \{ \alpha < (2^\mu)^+ : cf(\alpha) = \mu^+\}$ and $\Phi: S \to (2^\mu)^+$ be defined as $\Phi(\alpha)= min\{ \beta : \gtp(N_\alpha/ M_\alpha) \text{ does not fork over } M_\beta\}$. Observe that $\Phi$ is regressive by local character and the fact that $\mu^{<\ell(\dnfb)}=\mu$. Then by Fodor's lemma there is $S^* \subseteq S$ stationary and $\alpha^* < (2^\mu)^+$ such that $\forall \alpha \in S^*(\gtp(N_\alpha/M_\alpha) \text{ does not fork over } M_{\alpha^*})$. We may assume without loss of generality that $S= S^*$ and $\alpha^*=0$. Hence,

\begin{equation}
\forall \alpha \in S(\gtp(N_\alpha/M_\alpha) \text{ does not fork over } M_{0}).
\end{equation}

By local character and using again that $\mu^{< \ell(\dnfb)}= \mu$ we have that for all $\alpha \in S$ there is $R_\alpha \in [M_0]^{\mu}$ such that $\gtp(N_\alpha/M_\alpha)\rest_{M_0} \text{ does not fork over } R_\alpha$. Define $\Psi: S \to [M_0]^\mu$ as $\Psi(\alpha)= R_\alpha$. Then  by the pigeonhole principle, since $|[M_0]^\mu|=2^\mu$, we may assume that there is a $R^*\in [M_0]^\mu$ such that: 

\begin{equation}
\forall \alpha \in S (\gtp(N_\alpha/M_\alpha)\rest_{M_0} \text{ does not fork over } R^*).
\end{equation}

 By base monotonicity we may further assume that $R \lea R^*$. Then applying transitivity to the previous two equations  we obtain that:
\begin{equation}
\forall \alpha \in S( N_\alpha \dnfb_{R^*} M_\alpha).
\end{equation}

Moreover, given $\alpha \in S$ $p_\alpha \in \gS(N_\alpha)$ does not fork over $R$ and $N_\alpha \lea M_{\alpha + 1}$. Applying extension and transitivity, there is $q_\alpha \in \gS(M_{\alpha +1})$ extending $p_\alpha$ and $q_\alpha$ does not fork over $R$. By base monotonicity, since $R\lea R^*\lea M_{\alpha +1}$, we also have that $q_\alpha$ does not fork over $R^*$.

Let $\Upsilon: S \to \gS(R^*)$ be defined as $\Upsilon(\alpha)=q_\alpha\rest_{R^*}$, by the pigeonhole principle  we may assume that there is $q \in \gS(R^*)$ such that:
\begin{equation}
\forall \alpha \in S( q_\alpha \geq q \text{ and } q_\alpha \text{ does not fork over } R^*).
\end{equation}

Let $\{ \alpha_n : n \in \omega\} \subseteq S$ be an increasing set of ordinals. We build $\{r_n : n \in \omega \}$ such that:
\begin{enumerate}
\item $r_0=q_{\alpha_0}$.
\item $r_{n+1} \geq r_n, p_{\alpha_{n+1}}$.
\item $r_n \in \gS(M_{\alpha_{n} +1})$.
\item $r_n$ does not fork over $R$. 
\end{enumerate}

The base step is given so let us do the induction step. By equation (5)  $N_{\alpha_{n+1}} \dnfb_{R^*} M_{\alpha_{n+1}}$. Since $\alpha_n + 1 \leq \alpha_{n+1} \in S$, we have that $M_{\alpha_n +1} \lea M_{\alpha_{n+1}}$, so by monotonicity $N_{\alpha_{n+1}} \dnfb_{R^*} M_{\alpha_n +1}$ and by normality we have that  $N_{\alpha_{n+1}} \cup R^* \dnfb_{R^*} M_{\alpha_n + 1}$. Realize that $q \in \gS(R^*)$, $q_{\alpha_{n+1}}\rest_{N_{\alpha_{n+1}} \cup R^*} \in \gS(N_{\alpha_{n+1}} \cup R^*; \ce)$, $r_{n} \in \gS(M_{\alpha_n + 1})$ and $M_{\alpha_{n+1} +1}$ substituted by $p$, $q_1$, $q_2$ and $N^*$ satisfy the hypothesis of the type-amalgamation property. Therefore there is $r_{n+1}\in \gS(M_{\alpha_{n+1} +1})$ such that $r_{n+1} \geq q_{\alpha_{n+1}}\rest_{N_{\alpha_{n+1}} \cup R^*}, r_n$ and $r_{n+1}$ does not fork over $R^*$. 

In particular we have that $r_{n+1} \geq r_n, p_{\alpha_{n+1}}$  (since $q_{\alpha_{n+1}} \geq p_{\alpha_{n+1}}$) and by transitivity (since $r_{n+1} \geq r_n$, $R^*\leq M_{\alpha_n+1}$, and $r_n$ does not fork over $R$) we have that $r_{n+1}$ does not for over $R$. This finishes the construction.

Finally $\{r_n \in \gS(M_{{\alpha_n} +1}) : n \in \omega\}$ is an increasing chain of types so by \cite[11.3]{baldwinbook09}, there is $r^* \in \gS(\bigcup_{n\in \omega}M_{\alpha_n+1})$ such that $r^* \geq r_n$ for each $n \in \omega$. In particular, by clause (2) of the construction, we have that $r^*$ extends $p_{\alpha_n}$ for every $n < \omega$, which is precisely what we needed to show. \end{proof}

The following generalizes \cite[A]{les} to the AEC context. The proof is similar to that of Theorem \ref{tp-st}, but using the $\aleph_0$-bound condition instead of the uniqueness property. 

\begin{theorem}\label{bsimple}
If $\dnfb$ is a simple independence relation, $\kappa(\dnfb)  \leq \mu \leq \lambda$ and $\mu^{<\ell(\dnfb)}=\mu$, then \[ NT(\mu, \lambda, \aleph_0) \leq \lambda^{\kappa(\dnfb) } + 2^{\mu}. \]
In particular, $NT(\mu, \lambda) \leq \lambda^{\kappa(\dnfb)} + 2^{\mu}$
\end{theorem}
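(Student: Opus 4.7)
The plan is to follow the template of Theorem \ref{tp-st}, but replace the single application of uniqueness at the end with an application of the $\aleph_0$-bound condition from Lemma \ref{chain}. Set $\lambda_0 = \kappa(\dnfb)$, $\chi = \lambda^{\lambda_0} + 2^{\mu}$, and argue by contradiction: suppose $\{p_\alpha : \alpha < \chi^+\} \subseteq \gS(M, \leq \mu)$ for some $M \in \K_\lambda$, with every subfamily of size $\geq \aleph_0$ inconsistent. Each $p_\alpha$ is a type of length $1$ over a submodel $N_\alpha \lea M$ of size $\leq \mu$, so by uniform local character (and using that $1^{<\ell(\dnfb)} \leq \mu^{<\ell(\dnfb)} = \mu$ together with the hypothesis $\kappa(\dnfb)\leq\mu$) there exists $R_\alpha \lea N_\alpha \lea M$ with $\|R_\alpha\| \leq \lambda_0$ over which $p_\alpha$ does not fork.

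Next I would apply the pigeonhole principle twice. Since $|[M]^{\leq \lambda_0}| \leq \lambda^{\lambda_0} \leq \chi$, there is a fixed $R \in [M]^{\leq\lambda_0}$ and $S \subseteq \chi^+$ of cardinality $\chi^+$ with $R_\alpha = R$ for every $\alpha \in S$; in particular each $p_\alpha$ for $\alpha \in S$ does not fork over $R$. Since $|\gS(R)| \leq 2^{\lambda_0} \leq 2^{\mu} \leq \chi$, a second pigeonhole yields $S' \subseteq S$ of cardinality $\chi^+$ and a single $p \in \gS(R)$ such that $p_\alpha\rest_R = p$ for every $\alpha \in S'$. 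Thus each $p_\alpha$ (for $\alpha \in S'$) is a non-forking extension of the common type $p \in \gS(R)$.

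Now I would invoke the $\aleph_0$-bound condition, which holds for $\dnfb$ by Lemma \ref{chain}. Taking $\kappa := \max(\LS(\K), \|R\|) \leq \mu$, the hypotheses are satisfied: $M \in \K_\lambda$ with $\lambda \geq \kappa(\dnfb)$, $p \in \gS(R)$ is the common base type, $\mu \in [\kappa(\dnfb)+\kappa, \lambda]$ with $\mu^{<\ell(\dnfb)} = \mu$, and $\{p_\alpha : \alpha \in S'\} \subseteq \gS(M, \leq \mu)$ is a family of size $\chi^+ \geq (2^{\mu})^+$ of non-forking extensions of $p$. The $\aleph_0$-bound condition therefore produces a countably infinite $A \subseteq S'$ and a common extension $q$ of $\{p_\alpha : \alpha \in A\}$, so this subfamily is consistent, contradicting the assumption that every subset of $\Gamma$ of size $\geq \aleph_0$ is inconsistent. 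Hence $NT(\mu,\lambda,\aleph_0) \leq \chi = \lambda^{\kappa(\dnfb)} + 2^\mu$, and the ``in particular'' clause $NT(\mu,\lambda) \leq \lambda^{\kappa(\dnfb)} + 2^\mu$ follows from the monotonicity $NT(\mu,\lambda,2) \leq NT(\mu,\lambda,\aleph_0)$ of Proposition \ref{first-b}(2).

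The main conceptual obstacle is already absorbed into Lemma \ref{chain}: without uniqueness, one cannot collapse all $\chi^+$-many extensions to a single type, and the best one can do is show that a countable subfamily is compatible. The verification side-step is routine bookkeeping — checking that the cardinal arithmetic lines up so that $\|R\| \leq \mu$ and $\chi^+ \geq (2^\mu)^+$, both of which are immediate from $\kappa(\dnfb)\leq\mu$ and the definition of $\chi$.
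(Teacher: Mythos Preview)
Your proposal is correct and follows essentially the same approach as the paper: pigeonhole twice to find a common base $R$ and a common restriction $p \in \gS(R)$ for $\chi^+$-many of the $p_\alpha$, then invoke the $\aleph_0$-bound condition (Lemma \ref{chain}) to extract a countably infinite consistent subfamily. The only cosmetic differences are that the paper first uses extension to assume each $N_\alpha$ has size exactly $\mu$ (which is harmless and in any case absorbed into the proof of Lemma \ref{chain}), and the paper takes $\kappa := \lambda_0$ rather than your $\kappa := \max(\LS(\K), \|R\|)$, but these coincide once one works with $R \in [M]^{\lambda_0}$.
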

\begin{proof}
Let $\lambda_0= \kappa(\dnfb)$, $\chi = \lambda^{\lambda_0} + 2^{\mu}$ and $\{ p_\alpha \in \gS(N_\alpha): \alpha < \chi^+ \} \subseteq \gS(M, \leq \mu)$ where $M \in \K_\lambda$. Observe that by the extension property we may assume that each $N_\alpha \in \K_\mu$. As in the proof of Theorem \ref{tp-st} there are $S \subseteq \chi^+$ of size $\chi^+$, $R \in [M]^{\lambda_0}$ and $p \in\gS (R)$ such that for every $\alpha \in S$ $p_\alpha \geq p \text{ and } p_\alpha \text{ does not fork over } R$.

By the $\aleph_0$-bound condition, where the cardinal parameters are as in the definition except that $\kappa := \lambda_0$ and all the model theoretic parameters are the same with $\{ p_\alpha : \alpha \in S\}$ being the collection of types and $dom(p)=R$, we obtain that there are countable $A \subseteq S$ and $q$ a type such that $q \geq p_\alpha$ for each $\alpha \in A$. In particular $\{p_\alpha: \alpha \in A \}$ is consistent. Hence $NT(\mu,\lambda, \aleph_0) \leq \lambda^{\lambda_0} + 2^\mu$.
\end{proof}
 
\begin{remark}
Observe that when $\dnfb$ is a stable or almost-stable  independence relation Theorems \ref{tp-st} and \ref{star-bound} give us a better bound. Moreover,  Theorems \ref{tp-st} and \ref{star-bound} give us a bound for each $\kappa \in Car$ while the above corollary only gives us a bound when $\kappa$ is countable, as we will see in Theorem \ref{bound_cond2} more can be said if we assume the $(<\aleph_0)$-witness property. 
\end{remark}
The following result shows that we can not have the $2$-tree property if $\K$ has a simple independence relation.

\begin{cor}\label{n2-tp}
If $\dnfb$ is a simple independent relation, then $\K$ does not have the $2$-tree property.
\end{cor}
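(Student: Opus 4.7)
The plan is to argue by contradiction: suppose $\K$ has the $2$-tree property, then derive a contradiction with the bound on $NT(\mu,\lambda,\aleph_0)$ provided by Theorem \ref{bsimple} by choosing $\mu$ and $\lambda$ cleverly. This will closely parallel the argument for Lemma \ref{ntp}, but the lower bound side is easier here since for the $2$-tree property Lemma \ref{2-tp} directly delivers $NT(\mu,\lambda,\kappa)\geq \lambda^\mu$ for all $\kappa\geq 2$.

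First, I would pick the cardinal parameters. Choose an infinite cardinal $\mu$ satisfying $\mu\geq \kappa(\dnfb)+\LS(\K)$ and $\mu^{<\ell(\dnfb)}=\mu$; such a $\mu$ always exists (for instance, take a successor of $(\kappa(\dnfb)+\LS(\K)+2^{<\ell(\dnfb)})$). Then set $\lambda:=\beth_\mu(\mu)$. Standard cardinal arithmetic gives that $\lambda$ is a strong limit of cofinality $\mu$, so $\lambda^{<\mu}=\lambda$ and $\lambda^{\kappa(\dnfb)}=\lambda$ (since $\kappa(\dnfb)<\mu$), while $2^\mu<\lambda$.

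Next, I would assemble the two bounds on $NT(\mu,\lambda,\aleph_0)$. Since $\K$ has the $2$-tree property it has, in particular, the $(\mu,\lambda,2)$-tree property, so by Lemma \ref{2-tp} (the moreover clause, using $\lambda^{<\mu}=\lambda$ and $\LS(\K)\leq \mu\leq\lambda$) we get
\[ \lambda^\mu \leq NT(\mu,\lambda,\aleph_0). \]
On the other hand, since $\kappa(\dnfb)\leq\mu\leq\lambda$ and $\mu^{<\ell(\dnfb)}=\mu$, Theorem \ref{bsimple} applies and yields
\[ NT(\mu,\lambda,\aleph_0)\leq \lambda^{\kappa(\dnfb)}+2^\mu = \lambda, \]
using the cardinal arithmetic computed above.

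Combining the two displays gives $\lambda^\mu\leq \lambda$. But by K\"onig's theorem, $\lambda^\mu=\lambda^{\cf \lambda}>\lambda$, a contradiction. The only non-routine choice is selecting the right $\mu$ and $\lambda$ so that the upper bound of Theorem \ref{bsimple} collapses to $\lambda$ while $\lambda^\mu$ strictly exceeds $\lambda$; the cofinality-$\mu$ strong limit $\lambda=\beth_\mu(\mu)$ is tailored exactly for this. I expect no genuine obstacle beyond being careful that the cardinal arithmetic assumptions of Theorem \ref{bsimple} and Lemma \ref{2-tp} are both satisfied by the same pair $(\mu,\lambda)$.
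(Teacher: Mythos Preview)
Your argument is correct and follows the same route as the paper's: choose $\mu$ regular with $\mu>\kappa(\dnfb)$ and $\mu^{<\ell(\dnfb)}=\mu$, set $\lambda=\beth_\mu(\mu)$, and combine the upper bound from Theorem \ref{bsimple} with the lower bound from Lemma \ref{2-tp} to contradict K\"onig's theorem. One minor caveat: your suggested example $\mu=(\kappa(\dnfb)+\LS(\K)+2^{<\ell(\dnfb)})^+$ need not satisfy $\mu^{<\ell(\dnfb)}=\mu$ in general (for instance if $\ell(\dnfb)$ is uncountable and $\kappa(\dnfb)$ is a singular cardinal of small cofinality exceeding $2^{<\ell(\dnfb)}$); the paper instead takes $\mu=(\beth_{(\aleph_0+\ell(\dnfb))^+}(\kappa(\dnfb)^+))^+$, which always works via Hausdorff's formula.
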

\begin{proof}
Suppose for the sake of contradiction that $\K$ has the $2$-tree property. 

Let $\lambda_0 =\kappa(\dnfb)$, $\mu = (\beth_{(\aleph_0 + \ell(\dnfb))^+}(\lambda_0^+))^+$ and $\lambda = \beth_{\mu}(\mu)$. Observe that the following cardinal arithmetic equalities hold:
\begin{enumerate}
\item $\mu^{<\ell(\dnfb)}=\mu$, using that $cf(\beth_{(\aleph_0 + \ell(\dnfb))^+}(\lambda_0^+))=(\aleph_0 + \ell(\dnfb))^+$ and Hausdorff formula.
\item $\lambda^{\lambda_0}+ 2^{\mu}= \lambda$, using that $cf(\lambda)=\mu > \lambda_0$ and that $\beth_\mu(\mu)>2^{\mu}$. 
\item $\lambda^{<\mu}=\lambda$, using that $cf(\lambda)=\mu$. 

\end{enumerate}

Applying Theorem \ref{bsimple}, this is possible by the first cardinal arithmetic equality, and by the second cardinal arithmetic equality we get that:

\begin{equation}
 NT(\mu, \lambda) \leq \lambda^{\lambda_0} + 2^\mu =\lambda. \end{equation}

Applying Lemma \ref{2-tp}, this is possible by the third cardinal arithmetic equality,  we get that \begin{equation}
\lambda^{\mu} \leq NT(\mu, \lambda).\end{equation}

So putting inequalities (7) and (8) we obtain that $\lambda^{\mu} \leq \lambda$, but this is a contradiction to K\"{o}nig's Lemma since $cf(\lambda)=\mu$.  \end{proof}

\begin{remark}
In the result above, instead of showing the failure of the $2$-tree property, we would have liked to obtain the failure of the tree property. We will show in Corollary \ref{bound_cond3} that this is the case if $\dnfb$ has the $(<\aleph_0)$-witness property for singletons.
\end{remark}

\section{Simple independent relations with the witness property}

In this section we continue the study of  simple independence relations under locality assumptions. We begin by showing the failure of the tree property under the existence of a simple independence relation with the $(< \aleph_0)$-witness property. Then we study  simple independence relations with the $(< \LS(\K)^+)$-witness property and obtain some basic results.

\subsection{Failure of the tree property} The next argument extends the one presented in Lemma \ref{chain}.

\begin{lemma}\label{bound_cond}
If $\dnfb$ is a simple independence relation with the $(<\aleph_0)$-witness property for singletons, then $\dnfb$ has the strong $(2^\mu)^+$-bound condition.
\end{lemma}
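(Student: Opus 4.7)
The plan is to mimic the proof of Lemma \ref{chain}, but iterate the type-amalgamation construction up to length $(2^\mu)^+$ rather than $\omega$. The $(<\aleph_0)$-witness property for singletons is exactly what will let us patch the chain of types through limit stages of uncountable cofinality.

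First I would carry out the preliminary set-up verbatim from Lemma \ref{chain}: after enlarging via extension and base monotonicity, assume $N_\alpha \in \K_\mu$; build a $\lea$-increasing continuous chain $\{M_\alpha : \alpha < (2^\mu)^+\}$ with $M_\alpha \in \K_{2^\mu}$, $R \lea M_0$, and $N_\alpha \lea M_{\alpha+1}$. Apply Fodor's lemma on $S := \{\alpha < (2^\mu)^+ : \cf(\alpha) = \mu^+\}$ to the regressive map $\alpha \mapsto \min\{\beta : \gtp(N_\alpha/M_\alpha) \text{ does not fork over } M_\beta\}$, which is regressive by local character together with $\mu^{<\ell(\dnfb)} = \mu$. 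After WLOG reductions and a second pigeonhole on $[M_0]^\mu$ (using $|[M_0]^\mu|=2^\mu$), refine to a stationary $S$ and obtain $R^* \in [M_0]^\mu$ with $R \lea R^*$ such that $N_\alpha \dnfb_{R^*} M_\alpha$ for all $\alpha \in S$. Extend each $p_\alpha$ to $q_\alpha \in \gS(M_{\alpha+1})$ not forking over $R$, hence over $R^*$ by base monotonicity; a third pigeonhole on $q_\alpha\rest_{R^*}$ refines $S$ further and yields a common $q \in \gS(R^*)$.

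Now enumerate the refined $S$ as $\{\alpha_\beta : \beta < (2^\mu)^+\}$ and build a $\leq$-increasing continuous chain $\{r_\beta : \beta < (2^\mu)^+\}$ with $r_\beta \in \gS(M_{\alpha_\beta+1})$, $r_\beta \geq p_{\alpha_\beta}$, $r_\beta\rest_{R^*}=q$, and $r_\beta$ not forking over $R^*$. At successor stages, run the same type-amalgamation step as in Lemma \ref{chain} with base $q$, the two partial types $q_{\alpha_{\beta+1}}\rest_{N_{\alpha_{\beta+1}} \cup R^*}$ and $r_\beta$, and ambient set $M_{\alpha_{\beta+1}+1}$. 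At limit stages of countable cofinality, invoke \cite[11.3]{baldwinbook09} to take the union. At a limit stage $\delta$ of uncountable cofinality, use the $(<\aleph_0)$-witness property for singletons to produce $r_\delta \in \gS(M_\delta')$, where $M_\delta' := \bigcup_{\gamma < \delta} M_{\alpha_\gamma+1}$; the point is that any finite $A \subseteq M_\delta'$ already lies in some $M_{\alpha_\gamma+1}$ (since $\cf(\delta) > \omega$), so the coherent sequence of restrictions $r_\gamma\rest_{A}$ together with the finite-character of non-forking over $R^*$ identifies a single non-forking extension of $q$ to $M_\delta'$ whose restriction to each $M_{\alpha_\gamma+1}$ is $r_\gamma$. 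Finally, set $r^* := \bigcup_\beta r_\beta$; this is a non-forking extension of $p$ over $\bigcup_\beta M_{\alpha_\beta+1}$ that extends $p_{\alpha_\beta}$ for every $\beta < (2^\mu)^+$, so $A := \{\alpha_\beta : \beta < (2^\mu)^+\}$ and $q := r^*$ witness the strong $(2^\mu)^+$-bound condition.

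The main obstacle is precisely the limit stages of uncountable cofinality: Baldwin's chain-of-types result only handles countable cofinality, and without uniqueness one cannot simply invoke extension to get a coherent limit. The $(<\aleph_0)$-witness property for singletons resolves exactly this: the finite character of non-forking over $R^*$ reduces the compatibility of the $r_\gamma$ to their finite restrictions, each of which already lives at some bounded stage $\gamma < \delta$ and is thus automatically coherent with the rest of the chain. Making this compatibility argument rigorous is the crux that separates the present lemma from Lemma \ref{chain}.
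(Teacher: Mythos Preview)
Your proposal has a genuine gap at limit stages of uncountable cofinality. You correctly identify this as the crux, but the $(<\aleph_0)$-witness property does \emph{not} do what you claim: it is a property about verifying non-forking of an already existing type, not a device for constructing a type. Galois types are not sets of formulas; knowing that for every finite $A \subseteq M_\delta'$ the restrictions $r_\gamma\rest_A$ eventually agree does not by itself produce a single element of $\ce$ realizing all the $r_\gamma$ simultaneously. Your sentence ``the finite-character of non-forking over $R^*$ identifies a single non-forking extension of $q$ to $M_\delta'$'' is where the argument breaks: non-forking extensions need not be unique (we are in the simple, not stable, setting), and the witness property gives neither existence nor uniqueness of such an extension.

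The paper handles this by carrying along the coherence data explicitly: it builds not only the types $r_i$ but realizations $a_i$ and a commuting system of automorphisms $\{f_{j,i} : j < i\}$ satisfying $f_{j,i}(a_j) = a_i$ and $f_{j,i}\rest_{M_{\alpha_j+1}} = \id$. This makes $\{(r_i, a_i, \{f_{j,i}\})\}$ a coherent (directed) system in the sense required by \cite[11.3]{baldwinbook09}, and that result then yields an upper bound $p^*$ at \emph{every} limit stage, regardless of cofinality. Only \emph{after} obtaining $p^*$ in this way does the $(<\aleph_0)$-witness property enter: it is used to verify that $p^*$ does not fork over $R$ (by reducing to finite subsets, each contained in some $M_{\alpha_j+1}$ where $r_j$ is known not to fork over $R$). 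This non-forking is what allows one to apply type-amalgamation at the limit stage to extend $p^*$ from $\bigcup_{j<i} M_{\alpha_j+1}$ to $M_{\alpha_i+1}$ while also picking up $p_{\alpha_i}$. So the witness property is essential, but its role is orthogonal to the one you assigned it.
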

\begin{proof}[Proof sketch ]
Everything is the same as the proof of Lemma \ref{chain} until equation (6), but in this case instead of building only countably many $r_n's$ we will build $(2^\mu)^+$ many of them. 

Let $\{ \alpha_i : i < (2^\mu)^+\} \subseteq S$ be an increasing set of ordinals. We build $\{ r_i : i < (2^\mu)^+ \}$, $\{ a_i :  i < (2^\mu)^+\}$ and $\{ f_{j,i}:  j < i < (2^\mu)^+ \}$ such that:
\begin{enumerate}
\item $r_0 = q_{\alpha_0}=\gtp(a_0/ M_{\alpha_0 + 1})$.
\item If $k < j <i < (2^\mu)^+$, then $f_{k,i}=f_{j,i}\circ f_{k,j}$.
\item  $\forall j < i ( f_{j,i}\rest_{M_{\alpha_j +1}}= \id_{M_{\alpha_j +1}}, f_{j,i}(a_j)=a_i \text{ and } f_{j,i}\in Aut(\ce))$. 
\item $r_i=\gtp(a_i/M_{\alpha_i +1})$ does not fork over $R$.
\item $r_i \geq p_{\alpha_i}$. 
\item  $\forall j < i ( r_j \leq r_i)$.  
\end{enumerate} 

The construction in the successor step is similar to that of Lemma \ref{chain}, so we only show how to do the the step when $i$ is a limit ordinal. Since $\{r_j : j < i\}$, $\{ a_j : j < i \}$ and  $\{f_{k,j} :  k < j < i \}$ is a directed system, by \cite[11.3]{baldwinbook09}, there is $p^*=\gtp(a^*/ \bigcup _{j<i} M_{\alpha_j +1})$ upper bound for $\{r_j : j<i \}$ and $\{ f_{j,i}^* : j <i\}$ satisfying (2) and (3) but with $a^*$ substituted for $a_i$. 

Using the $(<\aleph_0)$-witness property, invariance and monotonicity it is easy to show that $p^*$ does not fork over $R$. Observe that $\bigcup _{j<i} M_{\alpha_j +1} \subseteq M_{\alpha_i}$, $N_{\alpha_i} \dnfb_{R^*} M_{\alpha_i}$ (by equation (5) of Lemma \ref{chain}) and $p^* \geq r_0$. Using these, one can show that $q \in \gS(R^*)$, $q_{\alpha_i}\rest_{N_{\alpha_i} \cup R^*}\in \gS(N_{\alpha_i} \cup R^*; \ce)$, $p^* \in \gS(\bigcup _{j<i} M_{\alpha_j +1})$ and $M_{\alpha_i + 1}$ substituted for $p$, $q_1$, $q_2$ and $N^*$ satisfy the hypothesis of the type-amalgamation property. Therefore, there is $r_{i}\in \gS(M_{\alpha_{i} +1})$ such that $r_{i} \geq q_{\alpha_{i}}\rest_{N_{\alpha_{i}} \cup R^*}, p^*$ and $r_{i}$ does not fork over $R^*$.

 Let $r_i:=\gtp(a_i/ M_{\alpha_{i} +1})$. Since $r_i\rest_{\bigcup _{j<i} M_{\alpha_j +1}}=p^*$, there is $g\in Aut(\ce)$ such that $g(a^*)=a_i$ and $g\rest_{\bigcup _{j<i} M_{\alpha_j +1}}=id_{\bigcup _{j<i} M_{\alpha_j +1}}$. For each $j < i$, let $f_{j,i}:=g\circ f_{j,i}^*$. It is easy to show that $r_i, a_i, \{ f_{j,i} : j<i \}$ satisfy (1) through (6), for conditions (4)-(6) see the explanation given in Lemma \ref{chain} . This finishes the construction.

We have constructed $\{ (r_i,a_i, \{f_{k,j} : k<j<i\}): i < (2^\mu)^+\}$  a coherent sequence of types, then by \cite[11.3]{baldwinbook09} there is $r^* \in \gS(\bigcup_{i < (2^\mu)^+} M_{\alpha_i +1})$ such that $r^*$ extends $r_i$ for every $i < (2^\mu)^+$. In particular, $p_{\alpha_i} \leq r^*$ for every $i < (2^\mu)^+$, since by condition (5) $p_{\alpha_i} \leq r_i$ for each $i<(2^{\mu})^+$. Moreover, using the $(<\aleph_0) $-witness property it follows that $r^*$ does not fork over $R$. 
\end{proof}

Using the above result instead of Lemma \ref{chain} we are able to extend Theorem \ref{bsimple} to uncountable cardinals. As the proof is similar to that of  Theorem \ref{bsimple} we omit it.

\begin{theorem}\label{bound_cond2}
If $\dnfb$ is a simple independence relation with the $(<\aleph_0)$-witness property for singletons, $\kappa(\dnfb)  \leq \mu \leq \lambda$ and $\mu^{<\ell(\dnfb)}=\mu$, then \[ NT(\mu, \lambda, (2^\mu)^+) \leq \lambda^{\kappa(\dnfb)} + 2^{\mu}. \]
\end{theorem}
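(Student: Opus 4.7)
The plan is to mimic the proof of Theorem \ref{bsimple} step by step, substituting the use of Lemma \ref{chain} (the $\aleph_0$-bound condition) with the use of Lemma \ref{bound_cond} (the strong $(2^\mu)^+$-bound condition), which is available precisely because of the extra witness-property hypothesis. Set $\lambda_0 := \kappa(\dnfb)$, $\chi := \lambda^{\lambda_0} + 2^{\mu}$, and suppose for contradiction that we have a family $\{p_\alpha \in \gS(N_\alpha) : \alpha < \chi^+\} \subseteq \gS(M,\leq \mu)$ for some $M \in \K_\lambda$ such that every subfamily of cardinality $\geq (2^\mu)^+$ is inconsistent. Using extension and transitivity, I can assume each $N_\alpha \in \K_\mu$ (this only enlarges each domain; inconsistency of big subfamilies is preserved).

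Next, perform the same two pigeonhole reductions as in Theorem \ref{tp-st} and Theorem \ref{bsimple}: local character (together with $\mu^{<\ell(\dnfb)} = \mu$) furnishes, for each $\alpha$, some $R_\alpha \in [M]^{\lambda_0}$ over which $p_\alpha$ does not fork; since $|[M]^{\lambda_0}| \leq \lambda^{\lambda_0} \leq \chi$, we may thin the index set to obtain a fixed $R \in [M]^{\lambda_0}$ working for $\chi^+$ of the $\alpha$'s. A second pigeonhole on $\gS(R)$, whose size is at most $2^{\lambda_0} \leq 2^{\mu} \leq \chi$, extracts $S \subseteq \chi^+$ of cardinality $\chi^+$ and a fixed $p \in \gS(R)$ such that $p_\alpha \geq p$ and $p_\alpha$ does not fork over $R$ for every $\alpha \in S$.

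Now invoke Lemma \ref{bound_cond}: the parameters $\lambda' := \|M\|$, $\kappa' := \lambda_0$, $\mu' := \mu$, and $p \in \gS(R)$ with the family $\{p_\alpha : \alpha \in S\}$ fit the hypothesis of the strong $(2^\mu)^+$-bound condition (each $p_\alpha$ is a non-forking extension of $p$ lying over a submodel of size $\leq \mu$, and the cardinal arithmetic assumption $\mu^{<\ell(\dnfb)}=\mu$ is in force). The conclusion yields $A \subseteq S$ with $|A| = (2^\mu)^+$ together with a single type $q$ that extends $p_\alpha$ for every $\alpha \in A$. Consequently $\{p_\alpha : \alpha \in A\}$ is consistent, contradicting the assumed inconsistency of all subfamilies of size $\geq (2^\mu)^+$. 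Hence $NT(\mu,\lambda,(2^\mu)^+) \leq \chi = \lambda^{\kappa(\dnfb)} + 2^\mu$.

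I do not expect any serious obstacle, since the whole architecture is already in place: the bookkeeping (two pigeonholes followed by an appeal to a bound condition) is identical to that of Theorem \ref{bsimple}, and the only real new ingredient is the replacement of ``countable $A$'' by ``$A$ of size $(2^\mu)^+$'', which is exactly the strengthening that Lemma \ref{bound_cond} provides under the $(<\aleph_0)$-witness property for singletons. The one point that deserves care is checking that the extension step at the start (enlarging each $N_\alpha$ to size $\mu$) is compatible with the witness-property hypothesis and does not spoil the non-forking witnessed over $R$, but this is immediate from monotonicity and transitivity.
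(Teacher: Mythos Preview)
Your proposal is correct and is essentially the paper's own (omitted) proof: the paper explicitly states that Theorem~\ref{bound_cond2} is obtained by repeating the argument of Theorem~\ref{bsimple} verbatim, replacing the appeal to Lemma~\ref{chain} by Lemma~\ref{bound_cond}. The two pigeonhole reductions and the invocation of the bound condition with $\kappa:=\lambda_0$ are exactly as you describe; the only cosmetic point is that the bound condition is stated for a family indexed by $(2^\mu)^+$, so strictly speaking you apply it to any sub-family of $S$ of that size (which exists since $|S|=\chi^+\geq (2^\mu)^+$).
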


As a corollary we obtain the failure of the tree property.

\begin{cor}\label{bound_cond3}
If $\dnfb$ is a simple independence relation with the $(<\aleph_0)$-witness property for singletons, then $\K$ does not have the tree property.
\end{cor}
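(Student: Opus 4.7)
The plan is a proof by contradiction that closely follows the templates of Lemma \ref{ntp} and Corollary \ref{n2-tp}, now using the sharper bound from Theorem \ref{bound_cond2}. Assume $\K$ has the $k$-tree property for some $k<\omega$. Set $\lambda_0 = \kappa(\dnfb)$, choose $\mu = (\beth_{(\aleph_0 + \ell(\dnfb))^+}(\lambda_0^+))^+$ so that $\mu^{<\ell(\dnfb)} = \mu$, and let $\lambda = \beth_\mu(\mu)$. The same three cardinal-arithmetic computations made in Corollary \ref{n2-tp} deliver $\lambda^{\lambda_0} + 2^\mu = \lambda$, $\lambda^{<\mu} = \lambda$, and $\cf(\lambda) = \mu$. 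Since $\kappa(\dnfb) \leq \mu \leq \lambda$ and $\mu^{<\ell(\dnfb)} = \mu$, Theorem \ref{bound_cond2} applies and yields the upper bound
\[ NT(\mu, \lambda, (2^\mu)^+) \leq \lambda^{\kappa(\dnfb)} + 2^\mu = \lambda. \]

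For the matching lower bound, apply the $(\mu, \lambda, k)$-tree property to obtain a witnessing system $\{(a_\eta, B_\eta) : \eta \in {}^{<\mu}\lambda\}$. Since $\lambda^{<\mu} = \lambda$, the union of the $B_\eta$ has size at most $\lambda$, so by downward L\"owenheim-Skolem we may fix $M \in \K_\lambda$ containing all of them. Exactly as in the construction of Lemma \ref{ntp}, for each branch $\nu \in {}^\mu\lambda$ choose a realization $a_\nu$ of $\{\gtp(a_{\nu\rest_\alpha}/B_{\nu\rest_\alpha}) : \alpha < \mu\}$ and a containing $M_\nu \in [M]^{\leq \mu}$, and set $p_\nu = \gtp(a_\nu / M_\nu) \in \gS(M, \leq \mu)$. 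If $A \subseteq {}^\mu\lambda$ is such that $\{p_\nu : \nu \in A\}$ is consistent, then the associated subtree $\{\nu\rest_\alpha : \alpha < \mu, \, \nu \in A\}$ is finitely branching (of branching bounded by $k-1$, by clause (3) of the tree property), so it has at most $(k-1)^\mu \leq 2^\mu$ cofinal branches; hence $|A| \leq 2^\mu$. Consequently every subfamily of $\{p_\nu : \nu \in {}^\mu\lambda\}$ of cardinality $(2^\mu)^+$ is inconsistent.

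It remains to bound $|\{p_\nu : \nu \in {}^\mu\lambda\}|$ from below. By K\"onig's lemma, $\lambda^\mu = \beth_\mu(\mu)^\mu \geq \lambda^+$, and the same pigeonhole argument as in Lemma \ref{ntp} (if fewer than $\lambda^+$ types appeared, some $\lambda^+$-sized set of branches would yield a consistent family, contradicting the previous paragraph since $(2^\mu)^+ < \lambda^+$) gives $|\{p_\nu\}| \geq \lambda^+$. Therefore $NT(\mu, \lambda, (2^\mu)^+) \geq \lambda^+$, which combined with the upper bound above produces $\lambda^+ \leq \lambda$, a contradiction. The only non-routine step is the one transferred from Lemma \ref{bound_cond}, namely that the simple independence relation with the $(<\aleph_0)$-witness property for singletons gives us the strong $(2^\mu)^+$-bound condition needed to push the $NT$ upper bound down to this smaller $\kappa$ parameter, and that has already been done in Theorem \ref{bound_cond2}; the rest is a straightforward adaptation of the arguments of Lemma \ref{ntp} and Corollary \ref{n2-tp}.
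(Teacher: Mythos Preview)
Your proof is correct and follows essentially the same approach as the paper: the same choice of $\mu$ and $\lambda$ (borrowed from Corollary \ref{n2-tp}), the same upper bound via Theorem \ref{bound_cond2}, and the same lower bound $\lambda^+ \leq NT(\mu,\lambda,(2^\mu)^+)$ obtained by repeating the branch-type construction of Lemma \ref{ntp}. You have simply spelled out in full the details that the paper leaves as ``a similar construction to that of Lemma \ref{ntp}.''
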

\begin{proof}[Proof sketch ] Let $\lambda_0 = \kappa(\dnfb)$.
Let  $\mu$ and $\lambda$ be as in Theorem \ref{n2-tp}, i.e., $\mu = (\beth_{(\aleph_0 + \ell(\dnfb))^+}(\lambda_0^+))^+$ and $\lambda = \beth_{\mu}(\mu)$. Then doing a similar construction to that of Lemma \ref{ntp} we get that:
\begin{equation} \lambda^+ \leq NT(\mu, \lambda, (2^{\mu})^+). 
\end{equation}
But by Theorem \ref{bound_cond2} we have that  $NT(\mu, \lambda, (2^{\mu})^+)\leq \lambda^{\lambda_0} + 2^\mu$, then by choice of $\mu$ and $\lambda$ we have that $\lambda^{\lambda_0} + 2^\mu =\lambda$, so:
\begin{equation} NT(\mu, \lambda, (2^{\mu})^+)\leq \lambda.
\end{equation}
Observe that equations (9) and (10) give us a contradiction.

\end{proof}

\begin{remark}
A trivial example of a simple independence relation with the  $(<\aleph_0)$-witness property for singletons is first-order non-forking in $T$ where $T$ is a complete first-order simple theory.  This follows from the fact that non-forking has finite character. 
\end{remark}

\subsection{Simple independence relations with the $(< \LS(\K)^+)$-witness property} We continue the study of simple independence relations but with the additional hypothesis of the $( < \LS(\K)^+)$-witness property for singletons.  Recall that we have shown that 
if $\kappa_1(\dnfb)=\LS(\K)$. then $\dnfb$ has the $( < \LS(\K)^+)$-witness property for singletons (Lemma \ref{lcwitness}).

The following simple proposition will be used to study the Lascar rank in the next section. 

\begin{prop}\label{small} Let $\dnfb$ be a simple independence relation with the $(<\LS(\K)^+)$-witness property for singletons. If $M \lea N$, $p \in \gS(M)$, $q \in \gS(N)$ and $q$ is a forking extension of $p$, then there is $M^* \lea N$ with $\| M^* \| = \| M \|$, $M \lea M^*$ and $q\rest_{M^*}$ is a forking extension of $p$.
\end{prop}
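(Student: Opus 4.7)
The plan is to use the $(<\LS(\K)^+)$-witness property for singletons to pin the forking down to a subset of $N$ of cardinality at most $\LS(\K)$, and then to use downward L\"{o}wenheim-Skolem inside $N$ to absorb that subset into a $\K$-substructure of the same size as $M$.

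Concretely, I would write $q = \gtp(a/N)$ and $p = \gtp(a/M)$, so that the assumption that $q$ is a forking extension of $p$ translates to $a \fork_M N$. The contrapositive of the $(<\LS(\K)^+)$-witness property for singletons then produces $A \subseteq N$ with $|A| \leq \LS(\K)$ such that already $a \fork_M A$. This single application is the only use of the witness hypothesis; everything after is structural.

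Next I would apply downward L\"{o}wenheim-Skolem inside $N$ to the set $|M| \cup A$, which has cardinality $\|M\|$ since $\LS(\K) \leq \|M\|$. Choosing the L\"{o}wenheim-Skolem closure to contain $M$, this yields $M^* \lea N$ with $M \lea M^*$, $A \subseteq |M^*|$, and $\|M^*\| = \|M\|$. Monotonicity applied to the inclusion $A \subseteq |M^*|$ converts $a \fork_M A$ into $a \fork_M M^*$, so that $q\rest_{M^*} = \gtp(a/M^*)$ is a forking extension of $p$, as required.

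I do not expect any genuine obstacle here: the witness property is essentially tailor-made for this statement, and none of the heavier ingredients of simple independence (type-amalgamation, the bound condition, symmetry, etc.) enter. The only minor subtlety is checking that the L\"{o}wenheim-Skolem closure can be arranged to sit between $M$ and $N$ as a $\K$-embedding, which is a standard AEC manipulation granted by the coherence axiom.
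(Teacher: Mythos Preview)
Your argument is correct and is essentially the contrapositive of the paper's proof: the paper assumes for contradiction that every such $M^*$ gives a non-forking restriction, verifies $a \dnfb_M A$ for every small $A$ via downward L\"{o}wenheim--Skolem, and invokes the witness property to conclude $a \dnfb_M N$, whereas you unwind this directly by first extracting a small forking witness $A$ and then enlarging it to $M^*$. The ingredients and the logical content are identical.
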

\begin{proof}
Assume that $q=\gtp(b/N)$.
Suppose for the sake of contradiction that it is not the case, hence for every $M^* \lea N$ with $\| M^* \| = \| M \|$ and $M \lea M^*$ it holds that $q\rest_{M^*}$ does not fork over $M$. We will show, using the $(<\LS(\K)^+)$-witness property  for singletons, that $b \dnfb_{M} N$. 

Let $A \subseteq N$ and $| A | \leq \LS(\K)$, then apply downward L\"{o}wenheim-Skolem to $A \cup M$ inside $N$ to get $M^* \in \K_{\|M\|}$ such that $A \cup M \subseteq M^* \lea N$. Then by assumption $b \dnfb_{M} M^*$. So by monotonicity $b \dnfb_M A$. Therefore, by the $(<\LS(\K)^+)$-witness property  for singletons, we conclude that $b \dnfb_{M} N$, which contradicts the hypothesis that $q$ forks over $M$. 
\end{proof}

The next lemma generalizes \cite[2.3.7]{kim}.

\begin{lemma}

Let $\dnfb$ be a simple independence relation that has the $(< \LS(\K)^+)$-witness property for singletons and without uniform local character. The following are equivalent.
\begin{enumerate}
\item $\kappa_1(\dnfb) \leq \lambda$.
\item There are no $\{ M_i : i \leq \lambda^+ \}$ and $p \in \gS(M_{\lambda^+})$ such that $\{ M_i : i \leq \lambda^+ \}$ is strictly increasing and continuous chain and $p$ forks over $M_i$ for every $i < \lambda^+$.\footnote{This generalizes the first-order notion of a forking chain.}
\end{enumerate} 
\end{lemma}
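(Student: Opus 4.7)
The plan is to prove $(1)\Rightarrow(2)$ by a direct contradiction argument using local character and base monotonicity, and $(2)\Rightarrow(1)$ by the contrapositive: starting from a witness to $\kappa_1(\dnfb) > \lambda$, I will construct a strictly increasing continuous chain of length $\lambda^+ + 1$ inside that witness whose restriction of a fixed type forks over every proper initial segment. Proposition \ref{small} is the key device used to feed the construction at successor stages.

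For $(1)\Rightarrow(2)$, suppose toward a contradiction that $\kappa_1(\dnfb) \leq \lambda$ and that a chain $\{M_i : i \leq \lambda^+\}$ together with $p \in \gS(M_{\lambda^+})$ as described in (2) exists. By local character (clause~(3) of Definition \ref{star-sim} in the form $\kappa_1(\dnfb)\le\lambda$), there is $M^* \lea M_{\lambda^+}$ with $\|M^*\|\leq \lambda$ such that $p$ does not fork over $M^*$. By continuity of the chain, $M_{\lambda^+}=\bigcup_{i<\lambda^+}M_i$, and since $\lambda^+$ is regular and $|M^*|\leq \lambda < \lambda^+$, there exists $i^*<\lambda^+$ with $|M^*|\subseteq |M_{i^*}|$; then $M^*\lea M_{i^*}$ by AEC coherence. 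Base monotonicity (applied with $N = M_{i^*}$ and $B = |M_{\lambda^+}|$) then yields that $p$ does not fork over $M_{i^*}$, contradicting the hypothesis.

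For $(2)\Rightarrow(1)$, I argue by contrapositive. Assume $\kappa_1(\dnfb) > \lambda$, so there is $N\in \K$ and $q\in \gS(N)$ such that $q$ forks over every $M_0\lea N$ with $\|M_0\|\leq \lambda$. By existence applied with the same base and top model, every type does not fork over its own domain; in particular, if $\|N\|\leq \lambda$ then $q$ would not fork over $N$, a contradiction. Hence $\|N\|\geq \lambda^+$. I now build by transfinite induction a strictly increasing continuous chain $\{M_i : i \leq \lambda^+\}$ with $M_i \lea N$ and $\|M_i\|\leq \lambda$ for every $i<\lambda^+$. Take $M_0\lea N$ of size $\lambda$ via downward L\"owenheim--Skolem. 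At a successor stage $i+1$, observe that $q$ forks over $M_i$ (since $\|M_i\|\leq \lambda$), so Proposition \ref{small} with $M=M_i$, $p_0:=q\rest_{M_i}$ and the ambient $N$ produces some $M'_i \lea N$ with $\|M'_i\|=\|M_i\|$, $M_i\lea M'_i$, and such that $q\rest_{M'_i}$ forks over $M_i$. Pick $c_i\in |N|\setminus |M_i|$ (which exists since $\|N\|\geq \lambda^+ > \|M_i\|$) and apply downward L\"owenheim--Skolem inside $N$ to $|M'_i|\cup\{c_i\}$ to obtain $M_{i+1}\lea N$ of size $\leq \lambda$ with $M'_i\cup\{c_i\}\subseteq |M_{i+1}|$; this guarantees the strict extension $M_{i+1}\supsetneq M_i$. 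At limits $\delta < \lambda^+$ set $M_\delta = \bigcup_{i<\delta}M_i$, which still lies in $\K_{\leq \lambda}\cap[N]^{\leq \lambda}$. Finally let $M_{\lambda^+}=\bigcup_{i<\lambda^+}M_i$ and $p := q\rest_{M_{\lambda^+}}$. For each $i<\lambda^+$, the forking of $q\rest_{M'_i}$ over $M_i$ transfers upward twice by monotonicity of $\dnfb$ in the right argument: since $M'_i\lea M_{i+1}$, $q\rest_{M_{i+1}}$ forks over $M_i$, and since $M_{i+1}\lea M_{\lambda^+}$, so does $p$. Thus $\{M_i:i\leq \lambda^+\}$ and $p$ witness the failure of (2).

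The main obstacle is the successor step of the construction in $(2)\Rightarrow(1)$: simply restricting $q$ to an arbitrary chain of small submodels of $N$ need not yield a type forking over each $M_i$, because forking ``lives'' on specific elements that could be lost when passing from $N$ down to $M_{\lambda^+}$. Proposition \ref{small} is precisely the tool that lets us absorb a forking witness over $M_i$ into $M_{i+1}$ while keeping $\|M_{i+1}\|\leq \lambda$, which is what forces the witness property for singletons into the hypotheses of the lemma.
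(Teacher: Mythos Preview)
Your proof is correct and follows essentially the same route as the paper's. The only cosmetic difference is at the successor step of $(2)\Rightarrow(1)$: the paper invokes the $(<\LS(\K)^+)$-witness property directly to find a small $B\subseteq N$ with $q\rest_B$ forking over $M_j$, whereas you quote Proposition~\ref{small} (whose proof is precisely that witness-property argument) to obtain $M'_i$; the two are interchangeable.
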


\begin{proof}
\fbox{$\to$} Assume for the sake of contradiction that there is $\{ M_i : i \leq \lambda^+ \}$ a strictly increasing and continuous chain and  $p \in \gS(M_{\lambda^+})$ such that  $p$ forks over $M_i$ for every $i < \lambda^+$. Then by hypothesis there is $M' \in [M_{\lambda^+}]^{\lambda}$ such that $p$ does not fork over $M'$. Then by regularity of $\lambda^+$ and base monotonicity there is $i < \lambda^+$ such that $p_{\lambda^+}$ does not fork over $M_i$. This is a contradiction.

\fbox{$\leftarrow$} Assume for the sake of contradiction that $\kappa_1(\dnfb) > \lambda$, then there is $q = \gtp(a/N) \in \gS(N)$ such that $q$ forks over $M$ for every $M \in [N]^\lambda$. Realize that $\| N \| \geq \lambda^+$ as $q$ does not fork over $N$.

We build $\{ M_i : i < \lambda^+ \}$ strictly increasing and continuous chain such that:
\begin{enumerate}
\item For every $i < \lambda^+$, $M_i \in \K_\lambda$ and $M_i \lea N$. 
\item For every $j > i$, $q\rest_{M_j}$ forks over $M_i$. 
\end{enumerate}

Before we do the construction observe that this is enough by taking $M_{\lambda^+} = \bigcup_{i < \lambda^+} M_i$, $\{ M_i : i \leq \lambda^+ \}$ and $p=q\rest_{M_{\lambda^+}}$ .

In the base step, just take any $M_0 \in [N]^\lambda$. If $i < \lambda^+$ limit take unions and and it works by monotonicity, so the only interesting case is when $i =j+1$. Then by the $(< \LS(\K)^+)$-witness property there is $B \subseteq N$ of size $\LS(\K)$ such that $q\rest_B$ forks over $M_j$ and pick $c \in N \backslash M_j$. Let $M_{j+1}$ be the structure obtained by applying downward L\"{o}wenheim-Skolem to $B \cup M_j \cup \{ c\}$ in $N$. This works by the choice of $B$ and monotonicity.
\end{proof}

Realize that even simple assertions as the ones above become very hard to prove or perhaps even false if the independence relation does not have some locality assumptions.

\section{Supersimple independence relations and the $U$-rank}

In this section we introduce supersimple independence relations and show that they can be characterized by the Lascar rank under a locality assumption on the independence relation. We also show that the existence of a supersimple independence relation implies the $(<\aleph_0)$-witness property for singletons in classes with intersections.

Let us introduce the notion of a supersimple independence relation.

\begin{defin}
 $\dnfb$ is a \emph{supersimple independence relation} if the following properties hold:
\begin{enumerate}
\item  $\dnfb$ is a simple independence relation.
\item (Finite local character) For every $\delta$ limit ordinal, $\{ M_i : i \leq \delta \}$ increasing and continouos chain and $p \in \gS(M_\delta)$, there is $i < \delta$ such that $p$ does not fork over $M_i$.
\end{enumerate}
 \end{defin}

\begin{remark}
Let $T$ be a complete first-order theory. If $T$ is supersimple and $\dnfb$ is first-order non-forking, then $\dnfb$ is a supersimple independence relation. 
\end{remark}

The following is straightforward but will be useful.

\begin{lemma}\label{sup-local}
If $\dnfb$ is a supersimple independence relation, then $\kappa_1(\dnfb)= \LS(\K)$.
\end{lemma}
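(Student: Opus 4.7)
The lower bound $\kappa_1(\dnfb) \geq \LS(\K)$ is immediate, since any $M_0 \lea M$ satisfies $\|M_0\| \geq \LS(\K)$, so no $\lambda < \LS(\K)$ can witness $\kappa_1$. For the upper bound I will prove, by induction on $\lambda := \|M\|$, that for every $M \in \K$ and every $p \in \gS(M)$ there exists $N \lea M$ with $\|N\| \leq \LS(\K)$ over which $p$ does not fork.

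If $\lambda = \LS(\K)$, take $N := M$; that $p$ does not fork over its own base $M$ follows from the existence property applied with $N := M$ (the resulting non-forking extension of $p$ over $M$ must coincide with $p$). Suppose now $\lambda > \LS(\K)$. Then $\lambda$ is uncountable, so $\delta := \cf(\lambda)$ is an infinite cardinal, in particular a limit ordinal. Using the L\"{o}wenheim-Skolem axiom inside $M$, write $M$ as a continuous increasing union $M = \bigcup_{i < \delta} M_i$ of $\lea$-submodels $M_i \lea M$ with $\|M_i\| < \lambda$ for every $i < \delta$: concretely, fix a continuous increasing cofinal sequence $\{\mu_i : i < \delta\}$ in $\lambda$ with $\mu_0 \geq \LS(\K)$, enumerate $M = \{a_\alpha : \alpha < \lambda\}$, and recursively take $M_i$ to be a L\"{o}wenheim-Skolem hull inside $M$ of $(\bigcup_{j<i} M_j) \cup \{a_\alpha : \alpha < \mu_i\}$.

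Apply the finite local character clause of the definition of a supersimple independence relation to the chain $\{M_i : i \leq \delta\}$ (with $M_\delta = M$) and to $p \in \gS(M_\delta)$: there is $i < \delta$ with $p$ not forking over $M_i$. Since $\|M_i\| < \lambda$, the inductive hypothesis applied to $p\rest_{M_i} \in \gS(M_i)$ produces $N \lea M_i$ with $\|N\| \leq \LS(\K)$ such that $p\rest_{M_i}$ does not fork over $N$. Transitivity along $N \lea M_i \lea M$ then yields that $p$ does not fork over $N$, closing the induction.

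The only substantive step is this inductive one, which couples finite local character with a standard L\"{o}wenheim-Skolem decomposition of $M$ into strictly smaller submodels; the only point requiring a moment of care is that the cofinality index $\delta$ is a limit ordinal, which is automatic once $\lambda > \LS(\K) \geq \aleph_0$, so no real obstacle arises.
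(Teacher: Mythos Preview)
Your proof is correct and follows essentially the same approach as the paper's proof sketch: induction on $\|M\|$, with the base step given by the fact that a type does not fork over its own domain (which you correctly derive from existence), and the inductive step combining finite local character on a continuous resolution of $M$ with transitivity. The paper's sketch is terser and does not spell out the resolution or the appeal to transitivity, but you have simply filled in those standard details.
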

\begin{proof}[Proof sketch] The proof can be done by induction on the cardinality of the domain of the type. The base step is clear because types do not fork over their domain and for the induction step use that $\dnfb$ has finite local character.
\end{proof}

The above lemma together with Lemma \ref{lcwitness} can be used to obtain the next result.

\begin{cor}
If $\dnfb$ is a supersimple independence relation, then $\dnfb$ has the $(< \LS(\K)^+)$-witness property for singletons.
\end{cor}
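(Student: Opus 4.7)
The proof will be an immediate combination of the two results cited just before the corollary. The plan is to observe that supersimplicity gives the sharpest possible local character bound for types of length one, and then feed this into the general lemma that extracts the witness property from local character.

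First, I will invoke Lemma \ref{sup-local}, which asserts that if $\dnfb$ is a supersimple independence relation then $\kappa_1(\dnfb) = \LS(\K)$. Then I will apply Lemma \ref{lcwitness} with $\alpha = 1$: that lemma says that whenever $\kappa_\alpha(\dnfb) = \lambda$, the relation $\dnfb$ has the $(<\lambda^+)$-witness property of length $\alpha$. Taking $\lambda = \LS(\K)$ yields exactly the $(<\LS(\K)^+)$-witness property of length $1$, which is the $(<\LS(\K)^+)$-witness property for singletons as required.

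No additional obstacle arises here since the work has been done in the two previous lemmas; this corollary is simply their composition.
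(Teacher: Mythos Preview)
Your proposal is correct and follows exactly the approach the paper intends: it explicitly says the corollary is obtained by combining Lemma~\ref{sup-local} with Lemma~\ref{lcwitness}, which is precisely what you do.
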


The next lemma shows that supersimplicty and stability imply superstability.

\begin{lemma}
If $\dnfb$ is a stable and supersimple independence relation, then $\K$ is Galois-stable for every in a tail of cardinals\footnote{This is equivalent to any notion of superstability in the context of AECs if one assume that the AEC has a monster model and is tame by \cite{grva} and \cite{vaseyt}.}.
\end{lemma}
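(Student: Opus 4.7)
The strategy is to extract from the two hypotheses enough of a superstability-type property to invoke the equivalence cited in the footnote.

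First I would observe that supersimplicity gives the sharp local character $\kappa_1(\dnfb)=\LS(\K)$ via Lemma \ref{sup-local}. Combined with stability of $\dnfb$, Corollary \ref{st-st} then yields $\lambda$-Galois-stability at every cardinal $\lambda$ satisfying $\lambda^{\LS(\K)}=\lambda$. This already produces arbitrarily large stability cardinals, though not yet a tail.

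To pass from arbitrarily large stability cardinals to a tail, I would bring in tameness together with the finite local character axiom. Since every stable independence relation is almost-stable by the remark after Definition \ref{star-sim}, Lemma \ref{star-tame} ensures that $\K$ is $\kappa_2(\dnfb)$-tame for types of length $1$. On the other hand, finite local character of $\dnfb$ is literally the continuous-chain form of superstability: for every continuous increasing $\langle M_i:i\leq\delta\rangle$ and every $p\in\gS(M_\delta)$, some $M_i$ already witnesses non-forking. In a tame AEC with a monster model, such a superstability-type property together with the existence of a single stability cardinal is known to imply Galois-stability on a tail by \cite{grva} and \cite{vaseyt}, which is the equivalence referenced in the footnote.

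The main obstacle I anticipate is purely bookkeeping, namely matching the finite-local-character axiom of $\dnfb$ to the exact formulation of superstability used in \cite{grva} or \cite{vaseyt} (for example, ``every limit model is Galois-saturated'' or ``no long splitting chain''). If the match is not direct, a short auxiliary argument translating the finite local character of a stable independence relation into the chosen formulation should close the gap, after which the tail of stability cardinals is immediate.
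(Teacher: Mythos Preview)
Your approach is quite different from the paper's and leaves a real gap where you yourself flag the ``main obstacle.'' The paper does not invoke \cite{grva} or \cite{vaseyt} at all; the footnote is merely a remark that the \emph{conclusion} of the lemma is one of the equivalent forms of superstability, not an invitation to use that equivalence in the proof. Instead, the paper gives a completely self-contained induction on cardinals: from Corollary~\ref{st-st} one gets a first stability cardinal $\lambda_0$, and then one shows directly that $\K$ is $\mu$-Galois-stable for every $\mu\ge\lambda_0$. For the induction step at $\mu$, write any $M\in\K_\mu$ as an increasing continuous union $\bigcup_{\alpha<\mu}M_\alpha$ with $M_\alpha\in\K_{<\mu}$; finite local character sends each $p_i\in\gS(M)$ to some $M_{\alpha_i}$ over which it does not fork, and then pigeonhole on the pair $(M_{\alpha_i},p_i\restriction M_{\alpha_i})$ together with \emph{uniqueness} (this is where stability of $\dnfb$ enters) forces two of $\mu^+$ many types to coincide. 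This is essentially the same argument as in Theorem~\ref{tp-st}, only with finite local character replacing the cardinal bound $\kappa_1(\dnfb)$.

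Your route, by contrast, requires translating ``finite local character for $\dnfb$'' into whichever superstability formulation \cite{grva} or \cite{vaseyt} actually uses (e.g.\ no long $\mu$-splitting chains, or solvability, or uniqueness of limit models). That translation is not free: those papers work with $\mu$-splitting rather than an abstract $\dnfb$, and while $\dnfb\subseteq\nes$ gives you one direction of comparison, getting the precise chain condition they need from finite local character of $\dnfb$ is exactly the ``short auxiliary argument'' you defer---and it is essentially as much work as the paper's direct proof. So the proposal is not wrong in spirit, but it outsources the core step to an unspecified lemma, whereas the paper's three-line pigeonhole-plus-uniqueness argument settles the matter without any external input.
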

\begin{proof}
Since $\dnfb$ is a stable independence relation, by Corollary \ref{st-st} $\K$ is a Galois-stable AEC, so let $\lambda_0$ be the first stability cardinal. We show by induction on $\mu \geq \lambda_0$ that $\K$ is $\mu$-Galois-stable.

The base step is clear, so let us do the induction step. We proceed by contradiction, let $M \in\K_\mu$ and $\{p_i : i < \mu^+ \} \subseteq \gS(M)$ be an enumeration of different Galois-types. Let $\{ M_\alpha : \alpha < \mu\} \subseteq \K_{<\mu}$ be an increasing chain of submodels of $M$ such that $\bigcup_{\alpha < \mu} M_\alpha = M$. Then by supersimplicity for every $i < \mu^+$ there is $\alpha_i < \mu$ such that $p_i$ does not fork over $M_{\alpha_i}$. Then by the pigeonhole principle and using that $\dnfb$ has uniqueness,  one can show (as in Theorem \ref{tp-st}) that there are $i \neq j < \mu^+$ such that $p_i = p_j$- This is clearly a contradiction. Therefore, $\K$ is $\mu$-Galois-stable.
\end{proof}

It is worth noticing that Lemma \ref{bound_cond} can be carried out with the finite local character assumption instead of the $(<\aleph_0)$-witness property for singletons. The idea is that by applying finite local character and transitivity in limit stages one can show that the types constructed in the limit stages do not fork over $R$ (where $R$ is the one introduced in condition (4) of Lemma \ref{bound_cond}). 

\begin{cor}\label{supersim}
If $\dnfb$ is a supersimple independence relation, then 
\begin{itemize}
\item  if $\kappa(\dnfb)  \leq \mu \leq \lambda$ and $\mu^{<\ell(\dnfb)}=\mu$, then \[ NT(\mu, \lambda, (2^\mu)^+) \leq \lambda^{\kappa(\dnfb)} + 2^{\mu}. \]
\item $\K$ does not have the tree property.

\end{itemize}
\end{cor}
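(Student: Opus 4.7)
The plan is to mirror the argument flow of Theorem \ref{bound_cond2} and Corollary \ref{bound_cond3}, substituting finite local character for the $(<\aleph_0)$-witness property wherever the latter was used.

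First, I would establish a supersimple analogue of Lemma \ref{bound_cond}: if $\dnfb$ is supersimple, then $\dnfb$ has the strong $(2^\mu)^+$-bound condition. The transfinite construction of the coherent sequence $\{(r_i, a_i, \{f_{k,j}\}_{k<j \leq i}) : i < (2^\mu)^+\}$ goes through verbatim at successor stages. At a limit stage $i$ one forms the direct-limit type $p^* = \gtp(a^*/\bigcup_{j<i} M_{\alpha_j+1})$ and must verify that $p^*$ does not fork over $R$ before invoking type-amalgamation. In Lemma \ref{bound_cond} this verification used the $(<\aleph_0)$-witness property; here I would instead apply finite local character to a continuous chain built from the $M_{\alpha_j+1}$'s, producing some index $j_0 < i$ at which $p^*$ is a non-forking extension of its restriction at stage $j_0$. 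Transitivity through $R \lea R^* \lea M_{\alpha_{j_0}+1}$, combined with the induction hypothesis that $r_{j_0}$ does not fork over $R$, then yields that $p^*$ itself does not fork over $R$. The rest of the limit stage, namely applying type-amalgamation to $q$, $q_{\alpha_i}\rest_{N_{\alpha_i}\cup R^*}$ and $p^*$ over $R^*$ inside $M_{\alpha_i+1}$, is unchanged. The same idea handles the closure at the top of the construction.

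Once the strong $(2^\mu)^+$-bound condition is available, the first bullet is proved exactly as in Theorem \ref{bound_cond2}: given $\{p_\alpha : \alpha < \chi^+\} \subseteq \gS(M, \leq \mu)$ with $\chi = \lambda^{\kappa(\dnfb)} + 2^\mu$, two applications of the pigeonhole principle (first on the non-forking base model in $[M]^{\kappa(\dnfb)}$, then on the restriction to it) isolate a set $S' \subseteq \chi^+$ of size $\chi^+$ on which every $p_\alpha$ is a non-forking extension of a common $p \in \gS(R)$. The bound condition then supplies $A \subseteq S'$ of size $(2^\mu)^+$ on which a single type extends every $p_\alpha$, giving $NT(\mu, \lambda, (2^\mu)^+) \leq \chi$.

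The second bullet follows by the cardinal-arithmetic contradiction argument of Corollary \ref{bound_cond3}: pick $\mu = (\beth_{(\aleph_0+\ell(\dnfb))^+}(\kappa(\dnfb)^+))^+$ and $\lambda = \beth_\mu(\mu)$, so that $\lambda^{\kappa(\dnfb)} + 2^\mu = \lambda$ and $\lambda^{<\mu} = \lambda$. If $\K$ had the tree property, the construction in Lemma \ref{ntp} would give $\lambda^+ \leq NT(\mu, \lambda, (2^\mu)^+)$, which together with the first bullet forces $\lambda^+ \leq \lambda$, a contradiction. The main obstacle is the limit-stage verification in the modified Lemma \ref{bound_cond}, where one must carefully arrange the underlying chain so that finite local character genuinely applies and transitivity closes the non-forking-over-$R$ claim without recourse to any witness property.
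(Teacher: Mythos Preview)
Your proposal is correct and follows essentially the same approach as the paper: the paper's proof consists of the remark that Lemma \ref{bound_cond} goes through with finite local character replacing the $(<\aleph_0)$-witness property at limit stages (via exactly the finite-local-character-plus-transitivity argument you describe), after which the two bullets follow from Theorem \ref{bound_cond2} and Corollary \ref{bound_cond3} verbatim. Your observation about needing to arrange a genuinely continuous chain from the $M_{\alpha_j+1}$'s before invoking finite local character is the only point requiring care, and it is easily handled.
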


\subsection{Lascar rank}  The Lascar rank was extended to the AEC context by Boney and the first author in \cite{bogr}.

\begin{defin}[{\cite[7.2]{bogr}}]
We define $U$ with domain a type and range an ordinal or $\infty$ by, for any $p \in \gS(M)$
\begin{enumerate}
\item $U(p) \geq 0$.
\item $U(p) \geq \alpha$ for $\alpha$ limit ordinal if and only if $U(p) \geq \beta$ for each $\beta < \alpha$.
\item $U(p)\geq \beta + 1$ if and only if there are $M' \gea M$  and $p' \in \gS(M')$ with $\| M' \| = \| M \|$, $p'$ is a forking extension of $p$ and $U(p') \geq \beta$. 
\item $U(p) = \alpha$ if and only if $U(p)\geq \alpha$ and it is not the case that $U(p) \geq \alpha + 1$.
\item $U(p) = \infty$ if and only if $U(p) \geq \alpha$ for each $\alpha$ ordinal.
\end{enumerate}
\end{defin}

The next couple of results show that $U$ is a well-behaved rank. The proofs are similar to the ones presented in \cite[\S 7]{bogr}, but we fix a minor mistake of \cite[\S 7]{bogr}. The arguments of \cite[\S 7]{bogr} only work when the models under consideration are all of the same size, we are able to extend the arguments for models of different sizes by using the $(<\LS(\K)^+)$-witness property, specifically Proposition \ref{small}.

\begin{lemma}\label{basic-u}
 Let $\dnfb$ be a simple independence relation with the $(<\LS(\K)^+)$-witness property for singletons, then the $U$-rank satisfies:
\begin{enumerate}
\item (\cite[7.4]{bogr}) \underline{Invariance}: If $p \in \gS(M)$ and $f: M \cong M'$, then $U(p)=U(f(p))$.
\item \underline{Monotonicity}: If $M \lea N$, $p \in \gS(M)$, $q \in \gS(N)$ and $p \leq q$, then $U(q)\leq U(p)$.
\end{enumerate} 
\end{lemma}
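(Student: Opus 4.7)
The plan is to prove both clauses by induction on the $U$-rank; the zero and limit clauses are immediate, so everything rests on the successor step.

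For invariance, extend the isomorphism $f \colon M \cong M'$ to an automorphism $\bar f$ of $\ce$, which is available because $M$ and $M'$ are small relative to the monster. Assume $U(p) \geq \beta + 1$, witnessed by $N \gea M$ of the same cardinality as $M$ and a forking extension $p^\ast \in \gS(N)$ of $p$ with $U(p^\ast) \geq \beta$. Apply $\bar f$: then $\bar f[N] \gea M'$ still satisfies $\|\bar f[N]\| = \|M'\|$, and preservation under $\K$-embeddings ensures that $\bar f(p^\ast)$ is a forking extension of $f(p) = \bar f(p)$. The inductive hypothesis applied to the isomorphism $\bar f \rest N \colon N \cong \bar f[N]$ and the type $p^\ast$ gives $U(\bar f(p^\ast)) \geq \beta$, hence $U(f(p)) \geq \beta + 1$. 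The reverse inequality follows by the symmetric argument with $\bar f^{-1}$.

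For monotonicity, suppose $p \leq q$ and $U(q) \geq \beta + 1$, witnessed by $N' \gea N$ with $\|N'\| = \|N\|$ and a forking extension $q' \in \gS(N')$ of $q$ satisfying $U(q') \geq \beta$. The key observation is that $q'$ forks not only over $N$ but also over $M$: if it did not, base monotonicity applied to $M \lea N \lea N'$ would force $q'$ not to fork over $N$, contradicting the choice of $q'$. Now apply Proposition \ref{small} to the pair $p \in \gS(M)$ and $q' \in \gS(N')$ to obtain $M' \lea N'$ with $M \lea M'$, $\|M'\| = \|M\|$, and such that $p' := q' \rest_{M'}$ is a forking extension of $p$. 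Since $p' \leq q'$ and $U(q') \geq \beta$, the inductive hypothesis yields $U(p') \geq \beta$, so $U(p) \geq \beta + 1$ as required.

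The main obstacle, and the whole reason for invoking the $(<\LS(\K)^+)$-witness property in the hypotheses, is precisely this last shrinking step: the canonical witness $q'$ for $U(q) \geq \beta + 1$ lives over a model of size $\|N\|$, whereas the definition of $U(p) \geq \beta + 1$ demands a witness over a model of the (possibly strictly smaller) size $\|M\|$. Proposition \ref{small} is exactly what allows cutting the domain down to cardinality $\|M\|$ while still preserving forking, which is the lacuna in \cite{bogr} where the models under consideration were tacitly assumed to be of the same size.
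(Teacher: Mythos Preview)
Your proof is correct and follows essentially the same approach as the paper. The paper simply cites \cite[7.4]{bogr} for invariance rather than writing out the argument, and for monotonicity it gives exactly your induction, invoking Proposition~\ref{small} at the successor step to shrink the domain of $q'$ down to a model of size $\|M\|$; your explanation via base monotonicity of why $q'$ must fork over $M$ is in fact more precise than the paper's one-word ``monotonicity,'' and your closing paragraph correctly identifies why the $(<\LS(\K)^+)$-witness property is needed here and what the gap in \cite{bogr} was.
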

\begin{proof} We provide a proof for (2) based on \cite[7.3]{bogr}. We prove by induction on $\alpha$ that: if $p \leq q$, then if $U(q) \geq \alpha$, then $U(p) \geq \alpha$.  The base step and limit step are trivial so assume that $\alpha =\beta+1$ and that $U(q) \geq \beta +1$. By definition there is $N' \gea N$ and $q' \in \gS(N')$ with $\| N' \| = \| N \|$, $q' \geq q$, $q'$ forks over $N$ and $U(q') \geq \beta$. Observe that by monotonicity $q'$ forks over $M$ and clearly $q' \geq p$. Then by Proposition  \ref{small} there is $M' \gea M$ with $\|M' \| = \| M \|$, $q'\rest_{M'} \geq p$ and $q'\rest_{M'}$ forks over $M$. Since $q'\rest_{M'} \leq q'$, by induction hypothesis $U(q'\rest_{M'}) \geq \beta$. Therefore, by the definition of the $U$-rank $U(p) \geq \beta +1 $. \end{proof}

\begin{lemma}\label{u-fork}  Let $\dnfb$ be a simple independence relation with $(<\LS(\K)^+)$-witness property for singletons.
Let  $M \lea N$, $p \in \gS(M)$ and $q \in \gS(N)$ with $p \leq q$ and  $U(p), U(q) < \infty$. Then:
\[ U(p)=U(q) \text{ if and only if } q \text{ is a non-forking extension of } p. \]
\end{lemma}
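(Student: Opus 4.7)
The forward direction is short and I would prove it by contrapositive. Suppose $q$ is a forking extension of $p$. By Proposition \ref{small}, which uses the $(<\LS(\K)^+)$-witness property for singletons, there is $M^*$ with $M \lea M^* \lea N$, $\|M^*\| = \|M\|$, and $q\rest_{M^*}$ still a forking extension of $p$. The monotonicity part of Lemma \ref{basic-u} gives $U(q) \leq U(q\rest_{M^*})$, and the successor clause in the definition of the $U$-rank then yields $U(p) \geq U(q\rest_{M^*}) + 1 \geq U(q) + 1$, whence $U(p) \neq U(q)$.

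For the backward direction, assume $q$ is a non-forking extension of $p$. Monotonicity (Lemma \ref{basic-u}(2)) gives $U(q) \leq U(p)$, so it suffices to establish $U(p) \leq U(q)$. I would argue by transfinite induction on $\alpha$ that $U(p) \geq \alpha$ implies $U(q) \geq \alpha$. The zero and limit cases are immediate, so fix the successor case $\alpha = \beta + 1$. Let $p^* \in \gS(M^*)$ witness $U(p) \geq \beta + 1$: $M \lea M^*$, $\|M^*\| = \|M\|$, $p^*$ a forking extension of $p$, and $U(p^*) \geq \beta$. The goal is to construct $q^* \in \gS(N^*)$ with $N \lea N^*$, $\|N^*\| = \|N\|$, $q^*$ a forking extension of $q$, and $U(q^*) \geq \beta$, which will witness $U(q) \geq \beta + 1$.

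The construction proceeds in three steps: (i) apply existence to $\gtp(M^*/M)$ together with invariance of the $U$-rank (Lemma \ref{basic-u}(1)) to replace $M^*$ by an $M$-automorphic image satisfying $M^* \dnfb_M N$; (ii) amalgamate and apply downward L\"owenheim--Skolem to obtain $N^*$ with $N \cup M^* \subseteq N^* \lea \ce$ and $\|N^*\| = \|N\|$; (iii) produce $q^* \in \gS(N^*)$ with $q^* \geq q$ which is moreover a non-forking extension of $p^*$ over $M^*$. Granting (iii), the remaining verifications go through by symmetry and transitivity: $q^*$ forks over $N$, since otherwise, for any realization $c$ of $q^*$, transitivity applied with $c \dnfb_N N^*$ and $c \dnfb_M N$ (the latter from $q^*\rest_N = q$) would give $c \dnfb_M M^*$, contradicting that $p^*$ forks over $M$; and the induction hypothesis applied to the pair $(p^*, q^*)$ at ordinal $\beta$ (valid since $\beta < \alpha$) yields $U(q^*) \geq \beta$.

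The main obstacle is step (iii), which in the stable case would be immediate from uniqueness of non-forking extensions over models, but in the simple case must be handled via the type-amalgamation axiom. The plan parallels the first-order simple argument (Kim 2.3.7) and the Boney--Grossberg treatment in \cite{bogr}: apply type-amalgamation to $q$ and a non-forking extension $\bar q \in \gS(M^*)$ of $p$ (obtained by existence), using $N \dnfb_M M^*$, to produce an amalgam $r \in \gS(N^*)$ non-forking over $M$ with $r\rest_N = q$; then, carefully exploiting the freedom in the $M$-automorphism used in step (i) together with invariance of $\dnfb$, adjust the amalgam so that its restriction to $M^*$ coincides with $p^*$ rather than $\bar q$ while preserving the restriction $r\rest_N = q$. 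The detailed bookkeeping here, which must be carried out with care in the AEC setting because the paper's type-amalgamation axiom is weaker than the full first-order independence theorem, is the technical heart of the argument and is exactly where Proposition \ref{small} and the $(<\LS(\K)^+)$-witness property do the essential work.
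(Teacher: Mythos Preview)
Your forward direction is correct and matches the paper's argument exactly.

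For the backward direction, your inductive framework and steps (i)--(ii) are sound, but step (iii) contains a genuine gap. You apply type-amalgamation with base $M$ to $q$ and a \emph{non-forking} extension $\bar q\in\gS(M^*)$ of $p$, producing an amalgam $r\in\gS(N^*)$ that does not fork over $M$, and then propose to ``adjust'' $r$ so that $r\rest_{M^*}=p^*$ while keeping $r\rest_N=q$. This adjustment is impossible: since $r$ does not fork over $M$, monotonicity forces $r\rest_{M^*}$ not to fork over $M$, whereas $p^*$ forks over $M$. No $M$-automorphism from step (i) can turn a non-forking restriction into a forking one, and any automorphism fixing $N$ pointwise also fixes $M$ and hence preserves non-forking over $M$ by invariance. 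Thus you never obtain a $q^*\geq q$ that is a non-forking extension of $p^*$ over $M^*$, and the appeal to the induction hypothesis for $U(q^*)\geq\beta$ collapses. (Note also that the $r$ you do construct need not fork over $N$, so it cannot serve as the witness for $U(q)\geq\beta+1$ either.)

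The paper's own proof of this direction is terse: it says the argument of \cite[7.7]{bogr} goes through once one has Proposition~\ref{close}. The route is different from yours in one essential respect: the non-forking extension is taken over $M^*$, not over $M$. One extends $p^*$ to a type $q^*$ over a model $N^*\supseteq N\cup M^*$ with $q^*$ non-forking over $M^*$; then $q^*\rest_{M^*}=p^*$ by construction, the induction hypothesis applied to the pair $(p^*,q^*)$ yields $U(q^*)\geq\beta$, and $q^*$ forks over $N$ since its restriction to $M^*$ already forks over $M$. The delicate point is arranging that $q^*\rest_N=q$; this is where, in the absence of independence over arbitrary sets, Proposition~\ref{close} (itself a consequence of type-amalgamation) is invoked to pass between sets and models so that a realization of $q$ and a suitable conjugate of $M^*$ can be positioned correctly before the extension step.
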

\begin{proof}
\fbox{$\to$} Assume for a sake of contradiction that $q$ forks over $p$. Then by Proposition \ref{small} there is $M^* \in \K$ with $\| M^* \| = \| M \|$, $q\rest_{M^*}\geq p$ and $q\rest_{M^*}$ forks over $M$. Then from monotonicity of the rank and the definition of the $U$-rank, we can conclude that $U(p) \geq U(q) +1$, which clearly contradicts our hypothesis.

\fbox{$\leftarrow$} The same argument given in \cite[7.7]{bogr} can be carried out in our context due to Proposition \ref{close}. \end{proof}

\begin{fact}\label{ordbound}(\cite[7.8]{bogr})   Let $\dnfb$ be a simple independence relation with the $(<\LS(\K)^+)$-witness property for singletons. For each $\mu \geq \LS(\K)$, there is some $\alpha_{\K, \mu} < (2^{\mu})^+$ such that for any $M \in \K_\mu$, if $U(p) \geq \alpha_{\K, \mu}$, then $U(p)=\infty$.  
\end{fact}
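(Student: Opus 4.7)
The plan is to reduce to a counting argument via the invariance of the $U$-rank (Lemma \ref{basic-u}(1)). For each ordinal $\alpha$ I would consider
\[ \mathcal{R}_\alpha := \{[(M, p)]_\cong : M \in \K_\mu,\ p \in \gS(M),\ U(p) \geq \alpha\}, \]
where the equivalence $\cong$ is the natural one: $(M, p) \cong (M', p')$ iff some $\K$-isomorphism $M \cong M'$ sends $p$ to $p'$. Invariance makes $\mathcal{R}_\alpha$ well-defined, and the definition of the rank makes $(\mathcal{R}_\alpha)_{\alpha \in \te{Ord}}$ a decreasing sequence that is continuous at limits ($\mathcal{R}_\delta = \bigcap_{\beta < \delta}\mathcal{R}_\beta$ for $\delta$ limit).

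First I would establish the counting bound $|\mathcal{R}_0| \leq 2^\mu$. The number of $\K$-isomorphism classes in $\K_\mu$ is at most $2^\mu$, and for each fixed $M \in \K_\mu$ we have $|\gS(M)| \leq 2^\mu$: working inside the monster, any Galois type $\gtp(a/M)$ is represented by a pair $(a, N)$ with $M \lea N \in \K_\mu$ and $a \in N$, and there are at most $2^\mu$ such presentations (using $\mu \geq \LS(\K)$ and amalgamation). Multiplying the two bounds gives $|\mathcal{R}_0| \leq 2^\mu$.

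Second, I would use this bound to force stabilization below $(2^\mu)^+$. Whenever $\mathcal{R}_{\alpha+1} \subsetneq \mathcal{R}_\alpha$, the classes removed are exactly those $[(M, p)]$ with $U(p) = \alpha$, and since $U$-rank is a function these removed sets are pairwise disjoint subsets of $\mathcal{R}_0$ as $\alpha$ varies. Since $|\mathcal{R}_0| \leq 2^\mu$, at most $2^\mu$ such strict drops occur, so there exists $\alpha^* < (2^\mu)^+$ with $\mathcal{R}_{\alpha^* + 1} = \mathcal{R}_{\alpha^*}$.

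Setting $\alpha_{\K, \mu} := \alpha^*$, I would finish by showing by transfinite induction on $\gamma$ that $\mathcal{R}_{\alpha^*} \subseteq \mathcal{R}_{\alpha^* + \gamma}$, which forces $U(p) = \infty$ for every $[(M, p)] \in \mathcal{R}_{\alpha^*}$. Limits follow by continuity. For successors $\gamma + 1$, given $[(M, p)] \in \mathcal{R}_{\alpha^*} = \mathcal{R}_{\alpha^* + 1}$, the rank definition produces a forking extension $(M', p')$ with $\|M'\| = \mu$ and $U(p') \geq \alpha^*$, so $[(M', p')] \in \mathcal{R}_{\alpha^*}$; the induction hypothesis yields $U(p') \geq \alpha^* + \gamma$, whence $U(p) \geq \alpha^* + \gamma + 1$. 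The main substantive ingredient is the counting bound $|\mathcal{R}_0| \leq 2^\mu$; the stabilize-and-propagate argument above is the same scheme used for the classical Lascar rank in first-order simple theories.
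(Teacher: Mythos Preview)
The paper does not supply its own proof of this statement: it is recorded as a Fact with a citation to \cite[7.8]{bogr}, so there is nothing in the present paper to compare your argument against line by line. That said, your argument is correct and is precisely the standard counting-and-stabilization proof that underlies the cited result. The bound $|\mathcal{R}_0| \leq 2^\mu$ is legitimate here because $\mu \geq \LS(\K)$ controls the size of the language (so there are at most $2^\mu$ isomorphism types in $\K_\mu$) and, working in the monster, each $p \in \gS(M)$ is determined by the isomorphism type over $M$ of a pair $(a, N)$ with $M \lea N \in \K_\mu$, of which there are at most $2^\mu$. The stabilization step and the transfinite propagation from $\mathcal{R}_{\alpha^*+1} = \mathcal{R}_{\alpha^*}$ are exactly as in the first-order case and use only invariance and the clause ``$\|M'\| = \|M\|$'' built into the definition of the $U$-rank, so no extra hypotheses are needed beyond those already assumed for Lemma~\ref{basic-u}. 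In short: your proof is fine and matches the intended one; the paper simply defers to the reference rather than reproducing it.
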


The proof of the following lemma is similar to that of \cite[7.9]{bogr}.

\begin{lemma}\label{super}  Let $\dnfb$ be a simple independence relation with the $(<\LS(\K)^+)$-witness property for singletons. Let $M \in \K_\mu$ and $p \in \gS(M)$. The following are equivalent.
\begin{enumerate}
\item $U(p)=\infty$
\item There is an increasing chain of types $\{p_n : n < \omega \}$ such that $p_0=p$ and $p_{n+1} $ is a forking extension of $p_n$ for each $n< \omega$.
\end{enumerate}
\end{lemma}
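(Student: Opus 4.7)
The plan is to establish both directions by standard $U$-rank calculations, using Fact \ref{ordbound} for the forward direction and Proposition \ref{small} for the backward direction to deal with the ``same cardinality'' constraint in clause (3) of the definition of the $U$-rank.

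For (1) $\Rightarrow$ (2), I would build $\{p_n : n<\omega\}$ recursively so that $U(p_n)=\infty$ at each stage. Set $p_0:=p$. Given $p_n\in \gS(M_n)$ with $\|M_n\|=\mu_n$ and $U(p_n)=\infty$, apply Fact \ref{ordbound} at $\mu_n$ to obtain the ordinal $\alpha_{\K,\mu_n}$. Since $U(p_n)\geq \alpha_{\K,\mu_n}+1$, by clause (3) of the definition there exist $M_{n+1}\gea M_n$ with $\|M_{n+1}\|=\mu_n$ and a forking extension $p_{n+1}\in \gS(M_{n+1})$ of $p_n$ with $U(p_{n+1})\geq \alpha_{\K,\mu_n}$. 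By Fact \ref{ordbound} applied at $\mu_n=\|M_{n+1}\|$, we conclude $U(p_{n+1})=\infty$, which preserves the inductive hypothesis.

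For (2) $\Rightarrow$ (1), I would prove by induction on $\alpha$ that $U(p_n)\geq \alpha$ for every $n<\omega$; specializing to $n=0$ yields $U(p)=\infty$. The base case $\alpha=0$ and the limit case are immediate from the definition of the rank. For the successor step $\alpha=\beta+1$, fix $n$ and assume $U(p_{n'})\geq \beta$ for every $n'$. Let $M_n:=dom(p_n)$ and $M_{n+1}:=dom(p_{n+1})$. Since $p_{n+1}$ is a forking extension of $p_n$, Proposition \ref{small} produces $M^*$ with $M_n\lea M^*\lea M_{n+1}$, $\|M^*\|=\|M_n\|$, and $p_{n+1}\rest_{M^*}$ a forking extension of $p_n$. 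By monotonicity of the $U$-rank (Lemma \ref{basic-u}) applied to $p_{n+1}\rest_{M^*}\leq p_{n+1}$, we have $U(p_{n+1}\rest_{M^*})\geq U(p_{n+1})\geq \beta$ by induction hypothesis at index $n+1$. Since $\|M^*\|=\|M_n\|$ and $p_{n+1}\rest_{M^*}$ forks over $M_n$ with rank $\geq \beta$, clause (3) of the definition of the $U$-rank gives $U(p_n)\geq \beta+1$, as required.

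The main subtlety is precisely the successor step in the backward direction: clause (3) of the definition of the $U$-rank demands a witness of the \emph{same} cardinality as the base, yet the chain $\{p_n\}$ may involve $dom(p_{n+1})$ strictly larger than $dom(p_n)$. Proposition \ref{small} is tailored to bridge this gap, which is why the $(<\LS(\K)^+)$-witness property for singletons enters the hypothesis. The forward direction, in turn, would fail without some uniform ordinal bound on non-$\infty$ ranks across models of a fixed size, which is supplied by Fact \ref{ordbound}.
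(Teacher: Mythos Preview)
Your proposal is correct and follows essentially the same approach as the paper: the forward direction builds the chain inside $\K_\mu$ using Fact \ref{ordbound} to keep $U(p_n)=\infty$, and the backward direction proves $U(p_n)\geq\alpha$ for all $n$ by induction on $\alpha$, invoking Proposition \ref{small} and monotonicity of the rank at the successor step to manufacture a same-cardinality witness. The only cosmetic difference is that the paper fixes a single $\alpha_{\K,\mu}$ from the start (since all $M_n$ stay in $\K_\mu$), whereas you index it by $\mu_n$; but as you note, the cardinality never changes, so this is the same argument.
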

\begin{proof}
\fbox{$\to$} Let $\alpha_{\K, \mu}$ be the ordinal given by Fact \ref{ordbound}.
We build $\{M_n : n < \omega\}$ and $\{ p_n \in \gS(M_n) : n <\omega\}$ by induction such that:
\begin{enumerate}
\item $p_0 = p$.
\item $M_n \in \K_\mu$.
\item $p_{n+1} $ is a forking extension of $p_n$ for every $n< \omega$.
\item $U(p_n) \geq \alpha_{K, \mu} + 1$.
\end{enumerate}
The base step is given by condition (1). As for the induction step, we have by induction that  $U(p_n) \geq \alpha_{K, \mu} + 1$. Then by definition of the $U$-rank there is $M_{n+1} \geq M_n$ and $p_{n+1} \in \gS(M_{n+1})$ a forking extension of $p_n$  such that $\| M_{n+1} \| = \| M_n \| = \mu$ and $U(p_{n+1}) \geq \alpha_{\K, \mu}$. Observe that since  $U(p_{n+1}) \geq \alpha_{\K, \mu}$ and $M_{n+1} \in \K_\mu$, we have that  $U(p_{n+1}) = \infty$, so  $U(p_{n+1}) \geq \alpha_{\K, \mu} +1$.

\fbox{$\leftarrow$}  Let $\{p_n : n < \omega \}$ be an increasing chain of types such that $p_0=p$ and $p_{n+1} $ is a forking extension of $p_n$ for each $n< \omega$. We prove by induction on $\alpha$ that:  $ U(p_n) \geq \alpha$ for every $n < \omega$. The base step and limit case are trivial so assume that $\alpha = \beta +1$ and take $n\in \omega$. By induction hypothesis $U(p_{n+1})\geq \beta$ and by hypothesis $p_{n+1} $ is a forking extension of $p_n$. Then by Proposition \ref{small} there is $M^* \in \K$ with $\| M^* \| = \| dom(p_n) \|$, $p_{n+1} \rest_{M^*}\geq p_n$   and $p_{n+1} \rest_{M^*}$ forks over $dom(p_n)$. Then by monotonicity of the rank and the definition of the $U$-rank we can conclude that $U(p_n) \geq \beta +1=\alpha$.
\end{proof}

With this we obtain our main result regarding the relationship between a supersimple independence relations and the $U$-rank . This generalizes a characterization of supersimplicity for first-order theories \cite[2.5.16]{kim}.

\begin{theorem}\label{equivsup}
Let $\dnfb$ be a simple independence relation with the $(<\aleph_0)$-witness property for singletons. The following are equivalent.
\begin{enumerate}
\item $\dnfb$ is a supersimple independence relation.
\item If $M \in \K$ and $p\in \gS(M)$, then $U(p) < \infty$.
\end{enumerate}
\end{theorem}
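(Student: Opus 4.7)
The plan is to derive both directions essentially from the three results about the $U$-rank already established in this section: the monotonicity part of Lemma \ref{basic-u}, the rank-equality characterization of non-forking (Lemma \ref{u-fork}), and the characterization of $U(p)=\infty$ by the existence of an infinite strictly forking chain (Lemma \ref{super}). Note that the hypothesis of the $(<\aleph_0)$-witness property for singletons implies the $(<\LS(\K)^+)$-witness property for singletons, so every one of the above lemmas applies.

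For $(1) \Rightarrow (2)$, I would argue by contrapositive. Assume some $p \in \gS(M)$ satisfies $U(p) = \infty$. By Lemma \ref{super} there is an increasing chain of types $\{p_n : n < \omega\}$ with $p_0 = p$ and $p_{n+1}$ a forking extension of $p_n$ for each $n$. Write $M_n = \dom(p_n)$ and $M_\omega = \bigcup_{n < \omega} M_n$. Using the coherent sequences machinery (as in \cite[11.3]{baldwinbook09}, invoked elsewhere in the paper), obtain $p_\omega \in \gS(M_\omega)$ extending every $p_n$. By supersimplicity applied to the continuous chain $\{M_i : i \leq \omega\}$ and to $p_\omega$, there is some $n_0 < \omega$ such that $p_\omega$ does not fork over $M_{n_0}$. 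Restricting to $M_{n_0+1}$ and using monotonicity, $p_{n_0+1} = p_\omega \rest_{M_{n_0+1}}$ does not fork over $M_{n_0}$, contradicting the construction of the chain.

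For $(2) \Rightarrow (1)$, I would verify finite local character directly. Let $\delta$ be a limit ordinal, let $\{M_i : i \leq \delta\}$ be an increasing continuous chain, and let $p \in \gS(M_\delta)$. Write $p_i = p \rest_{M_i}$, so that $p_i \leq p_j$ whenever $i \leq j \leq \delta$. By Lemma \ref{basic-u}.(2), the sequence $\langle U(p_i) : i \leq \delta \rangle$ is a non-increasing sequence of ordinals (each value is an ordinal by hypothesis), hence it must stabilize strictly below $\delta$: there is $i_0 < \delta$ with $U(p_{i_0}) = U(p_\delta) = U(p)$. By Lemma \ref{u-fork} applied to $p_{i_0} \leq p$, the type $p$ is a non-forking extension of $p_{i_0}$, which is exactly the finite local character condition for $\{M_i : i \leq \delta\}$ and $p$.

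The main subtlety I expect is the coherence step in the forward direction, namely ensuring that the chain $\{p_n : n < \omega\}$ produced by Lemma \ref{super} admits a common extension $p_\omega$ over $M_\omega$; this is routine through the coherent sequences technology of \cite[11.3]{baldwinbook09} already used repeatedly in this paper, so no essentially new work is needed.
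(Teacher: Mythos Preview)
Your $(1)\Rightarrow(2)$ argument is exactly the paper's.

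For $(2)\Rightarrow(1)$ your route is different from the paper's and is morally cleaner, but as written it has a real gap. A non-increasing ordinal-valued sequence $\langle U(p_i) : i \leq \delta\rangle$ need \emph{not} stabilize strictly below $\delta$: well-foundedness only gives that $\langle U(p_i) : i < \delta\rangle$ eventually equals some constant $c$ from some $i_0<\delta$ on, but nothing rules out a drop at the limit stage, i.e.\ $U(p_\delta)<c$ (think of the sequence that is constantly $1$ below $\omega$ and $0$ at $\omega$). Your application of Lemma \ref{u-fork} needs $U(p_{i_0})=U(p_\delta)$, and that equality is precisely the step you have not justified. This is also the only place the $(<\aleph_0)$-witness property is needed in this direction, and you never invoke it.

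The fix is short: once $U(p_j)=U(p_{i_0})$ for all $j\in[i_0,\delta)$, Lemma \ref{u-fork} gives that each $p_j$ does not fork over $M_{i_0}$; since $M_\delta=\bigcup_{j<\delta}M_j$, every finite $A\subseteq M_\delta$ lies in some $M_j$, so by monotonicity $p\rest_A$ does not fork over $M_{i_0}$, and the $(<\aleph_0)$-witness property yields that $p=p_\delta$ does not fork over $M_{i_0}$. This already is finite local character, so you do not even need to compute $U(p_\delta)$. The paper's proof of $(2)\Rightarrow(1)$ goes instead by contradiction: from a failure of finite local character it extracts an $\omega$-chain of strictly forking restrictions (using the witness property at the analogous spot) and then invokes Lemma \ref{super} to get $U=\infty$. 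Your rank-stabilization approach, once repaired, avoids that detour.
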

\begin{proof}
\fbox{$\to$} Suppose there are $M \in \K$ and $p \in \gS(M)$ such that $U(p)=\infty$. Then, by Lemma \ref{super}, there is an increasing chain of types $\{p_n  : n < \omega \}$ such that $p_0=p$ and $p_{n+1} $ is a forking extension of $p_n$ for every $n< \omega$.

Since we have that $\{p_n : n < \omega\}$ is an increasing chain of types, by \cite[11.3]{baldwinbook09}, there is $p_\omega \in \gS(\bigcup_{n< \omega} dom(p_n) )$ such that $p_\omega \geq p_n$ for each $n < \omega$. Then, by the definition of supersimplicty, there is $n < \omega$ such that $p_\omega$ does not fork over $dom(p_n)$. Hence by monotonicity $p_\omega\rest_{dom( p_{n+1} )}=p_{n+1}$ does not fork over $dom(p_n)$, which contradicts the fact that $p_{n+1} $ is a forking extension of $p_n$.

\fbox{$\leftarrow$} Assume for the sake of contradiction that $\dnfb$ is not a supersimple independence relation, then there are $\delta$ a limit ordinal, $\{N_i : i \leq \delta \}$ an increasing and continuous chain and $p \in \gS(N_\delta)$, such that $p$ forks over $N_i$ for every $i < \delta$. 

We first show that for every $i < \delta$ there is $j_i \in (i, \delta)$ such that $p\rest_{N_{j_i}}$ forks over $N_i$. 
Let $i < \delta$ and suppose for the sake of contradiction that $p\rest_{N_j}$ does not fork over $N_i$ for each $j \in (i, \delta)$. Then using the $(< \aleph_0)$-witness property for singletons, as in Proposition \ref{small}, one can show that $p$ does not fork over $N_i$, contradicting the hypothesis that $p$ forks over $N_i$.

Then one can build by induction $\{i_n : n< \omega\} \subseteq \delta$ increasing  such that $\{ p_{i_n} : n  < \omega\}$ is an increasing chain of types with $p_{i_{n+1}}$ a forking extension of $p_{i_n}$ for each $n < \omega$ where $p_{i_n}=p\rest_{N_{i_n}}$. Therefore, by Lemma \ref{super}, we can conclude that $U(p_{i_0})=\infty$. This contradicts the fact that  $U(p_{i_0}) < \infty$ by hypothesis.
\end{proof}

\subsection{A familiy of classes with the $(<\aleph_0)$-witness property} In this subsection we show that in classes that admit intersections one obtains the $(<\aleph_0)$-witness property for singletons from supersimplicity. Similar results assuming the existence of a superstable-like independence relation are obtained in Appendix C of \cite{vaseyu}. We begin by recalling the definition of classes that admit intersections, these were introduced by Baldwin and Shelah.

\begin{defin}  [{\cite[1.2]{BSh} }]An AEC \emph{admits intersections} if for every $N \in \K$ and $A \subseteq |N|$ there is $M_0 \lea N$ such that $|M_0|= \bigcap\{M \lea N : A \subseteq |M|\}$. For $N \in \K$ and $A \subseteq |N|$, let $cl^{N}_{\K}(A)=\bigcap\{M \lea N : A \subseteq |M|\}$, if it is clear from the context we will drop the $\K$. We write $cl(A)$ instead of $cl^{\ce}_{\K}(A)$ if $\ce$ is a monster model of $\K$ and $\K$ is clear from the context.
\end{defin}

Below we provide the properties of AECs that admit intersections that we will use, for a more detailed introduction to AECs that admit intersections the reader can consult \cite[$\S$2]{vaseyu}.

\begin{fact}\label{easy-f}
 Let $\K$ be an AEC that admits intersections.
\begin{enumerate}
\item If $A \subseteq B \subseteq N$, then $cl^N(A) \lea cl^N(B)$. 
\item If $A \subseteq M$ and $M \in \K$, then $cl(A) \lea M$.
\item (Finite character) Let $M \in \K$ and $a \in cl^M(B)$, then there is $B_0 \subseteq_{fin} B$ such that $a \in cl^M(B_0)$.
\end{enumerate}
\end{fact}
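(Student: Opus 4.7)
The plan is to derive all three parts from the definition of admitting intersections together with coherence and the union (chain) axiom of AECs; parts (1) and (2) are essentially formal, and the only real content is the finite character in (3).

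For (1), I would observe that since $A \subseteq B$, every $M \lea N$ containing $B$ also contains $A$, so the family intersected to form $cl^N(B)$ is contained in the family intersected to form $cl^N(A)$. Intersecting a smaller family produces a larger set, hence $|cl^N(A)| \subseteq |cl^N(B)|$. Admitting intersections makes both $cl^N(A)$ and $cl^N(B)$ $\K$-substructures of $N$, and then coherence delivers $cl^N(A) \lea cl^N(B)$.

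For (2), $M$ itself belongs to the defining family of $cl(A) = cl^{\mathfrak{C}}(A)$ since $A \subseteq M \lea \mathfrak{C}$; hence $|cl(A)| \subseteq |M|$. Applying coherence to $cl(A) \lea \mathfrak{C}$ and $M \lea \mathfrak{C}$ yields $cl(A) \lea M$.

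For (3), the key construction is to set $N := \bigcup\{cl^M(B_0) : B_0 \subseteq_{\text{fin}} B\}$. By (1) this is a $\lea$-directed system (for any two finite $B_0, B_1 \subseteq B$, their union is finite and $cl^M(B_0), cl^M(B_1) \lea cl^M(B_0 \cup B_1)$). Reindexing along a well-order cofinal in the directed system, the union axiom of AECs together with coherence gives $N \lea M$. Since $b \in cl^M(\{b\}) \subseteq N$ for every $b \in B$, we have $B \subseteq N$, so by the minimality in the definition of closure, $cl^M(B) \lea N$. Therefore any $a \in cl^M(B)$ already lies in some $cl^M(B_0)$ for a finite $B_0 \subseteq B$. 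The main technical step to double-check is the passage from a directed system of $\K$-substructures of $M$ to an honest $\K$-chain so that the union axiom applies; this is a standard reduction in AECs but is the only place where one must be careful, and it is where the hypothesis that $\K$ is an AEC (and not just a category equipped with intersections) is genuinely used.
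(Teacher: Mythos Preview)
Your argument is correct. The paper itself does not actually prove this fact: it declares (1) and (2) trivial and defers (3) to \cite[2.14]{vaseyu}. Your proofs of (1) and (2) are exactly the intended trivialities (the intersection over a subfamily is larger, then apply coherence), and your argument for (3) --- form the directed union $N$ of the closures of finite subsets of $B$, observe $N \lea M$ and $B \subseteq N$, hence $|cl^M(B)| \subseteq |N|$ --- is the standard proof and is what one finds in the cited reference. Your caveat about passing from a $\lea$-directed system to a chain is well-placed but harmless: that the union of a $\lea$-directed system lies in $\K$ and is $\lea$ any common upper bound is a routine consequence of the AEC chain axioms (proved, e.g., by induction on the cardinality of the index set).
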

\begin{proof}
(1) and (2) are trivial and (3) is \cite[2.14]{vaseyu}.
\end{proof}

We show that finite local character is actually witnessed by a finite set in classes with intersections.

\begin{lemma} Let $\K$ be an AEC with a monster model that admits intersections and $\dnfb$ be a simple independence relation. The following are equivalent.
\begin{enumerate}
\item (Finite local character) For every $\delta$ limit ordinal, $\{ M_i : i \leq \delta \}$ increasing and continouos chain and $p \in \gS(M_\delta)$, there is $i < \delta$ such that $p$ does not fork over $M_i$.
\item For every $M \in \K$ and $p \in \gS(M)$, there is $D \subseteq_{\text{fin}} M$ such that $p$ does not fork over $cl(D)$.
\end{enumerate}
\end{lemma}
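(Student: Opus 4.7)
My plan is to prove both directions of the equivalence; the direction $(2)\Rightarrow(1)$ is straightforward while $(1)\Rightarrow(2)$ requires a transfinite induction on ordinals.

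For $(2)\Rightarrow(1)$: given a continuous limit chain $\{M_i : i\leq\delta\}$ with $p\in\gS(M_\delta)$, hypothesis (2) produces a finite $D\subseteq M_\delta$ with $p$ not forking over $cl(D)$. Since $D$ is finite and $M_\delta=\bigcup_{i<\delta}M_i$ by continuity, we can choose $i<\delta$ with $D\subseteq M_i$; then $cl(D)\lea M_i$ by Fact \ref{easy-f}(2), and base monotonicity enlarging the base from $cl(D)$ to $M_i$ yields that $p$ does not fork over $M_i$.

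For $(1)\Rightarrow(2)$ I will prove by transfinite induction on an ordinal $\alpha$ the following statement, which then yields (2) by applying it to any enumeration of $M$ of length $\|M\|$: for every enumerated set $X=\{m_\beta : \beta<\alpha\}\subseteq\ce$ for which $cl(X)$ is a $\K$-structure, and every $q\in\gS(cl(X))$, there exists a finite $D\subseteq X$ with $q$ not forking over $cl(D)$. When $\alpha$ is finite take $D:=X$ itself, using that a type does not fork over its own domain (an instance of the existence axiom). When $\alpha$ is a limit ordinal, the chain $N_\beta:=cl(\{m_\gamma : \gamma<\beta\})$ for $\beta\leq\alpha$ is continuous by finite character of $cl$ (Fact \ref{easy-f}(3)), so (1) produces some $\beta_0<\alpha$ with $q$ not forking over $N_{\beta_0}$; the induction hypothesis applied to $\beta_0$ and $q\rest_{N_{\beta_0}}$ yields a finite $D$ with $q\rest_{N_{\beta_0}}$ not forking over $cl(D)$, and transitivity of $\dnfb$ through $cl(D)\lea N_{\beta_0}\lea N_\alpha$ gives $q$ not forking over $cl(D)$.

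The main obstacle is the infinite successor case, say $\alpha=\beta+n$ with $\beta$ a limit ordinal and $1\leq n<\omega$: here the chain $\{N_\gamma : \gamma\leq\alpha\}$ terminates at a successor, so (1) does not apply. I resolve this with a re-enumeration trick that places the last $n$ elements at the front, setting $m'_j:=m_{\beta+j}$ for $j<n$ and $m'_{n+\gamma}:=m_\gamma$ for $\gamma<\beta$. By infinite ordinal arithmetic $n+\beta=\beta$, so the new enumeration has ordinal length $\beta$, a limit strictly less than $\alpha$, while enumerating the same set $X$; hence $cl(\{m'_j : j<\beta\})=cl(X)$, and applying the induction hypothesis to this new enumeration of length $\beta$ completes the step.
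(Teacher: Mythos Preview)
Your proof is correct and follows essentially the same strategy as the paper: use finite character of $cl$ to see that the chain of closures of initial segments is continuous, apply (1) to drop to a proper initial segment, and then invoke the induction hypothesis together with transitivity. The only organizational difference is that the paper inducts on the \emph{cardinality} $\lambda$ of the set (proving $(*)_\lambda$: for every $A\in\mathcal{P}_\lambda(M)$ and $p\in\gS(cl(A))$ there is a finite $D$ with $p$ not forking over $cl(D)$), whereas you induct on the \emph{ordinal length} of an enumeration. Since every infinite cardinal is a limit ordinal, the paper never needs your infinite-successor case or the re-enumeration trick $n+\beta=\beta$; it simply observes that an initial segment of a $\lambda$-enumeration has cardinality $<\lambda$ and applies the cardinal induction hypothesis directly. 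Your re-enumeration device is sound, but inducting on cardinals gives a slightly cleaner bookkeeping with the same content.
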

\begin{proof}
The backward direction follows trivially using monotonicity, so we show the forward direction.

Let $M \in \K$ and $p \in \gS(M)$, we show by induction on $\lambda \leq \| M \|$ the following:

\[ (*)_\lambda : \text{ For every } A \in \mathcal{P}_{\lambda}(M) \text{ and } p \in \gS(cl(A)) \text{ , there is } D \subseteq_{\text{fin}} M \text{ s.t. } p \text{ does not fork over } cl(D).\]

Observe that this is enough as $cl(M)=M$. So let us do the proof.

\underline{Base:} If $\lambda$ is finite $(*)_\lambda$ is clear because given $p \in \gS(cl(A))$, $p$ does not fork over $cl(A)$. So let us do the case when $\lambda = \aleph_0$. Let $A =\{a_i : i < \omega \}$ be an enumeration without repetitions and $p \in \gS(cl(A))$.  Let $M_i = cl( \{a_j : j < i\})$ for every $i < \omega$ and $M_\omega = \bigcup_{i < \omega} M_i$. Observe that $\{ M_i : i \leq \omega \}$ is an increasing and continuous chain and $\bigcup_{i < \omega} M_i = cl(A)$ by the finite character of the closure operator. Then by (1) there is $i < \omega$ such that $p$ does not fork over $M_i = cl( \{a_j : j < i\})$. So $D = \{a_j : j < i\}$ is as needed.

\underline{Induction step:} Let $\lambda$ be an uncountable cardinal and suppose that $(*)_\mu$ holds for every $\mu < \lambda$. In this case the proof is similar to that of the  base step when $\lambda = \aleph_0$. The only difference is that on top of using (1), one uses the induction hypothesis, and transitivity of the independence relation.  \end{proof}

\begin{cor}\label{lc4}
 Let $\K$ be a class that admits intersections. If $\dnfb$ is a supersimple independence relation, then $\dnfb$ has the $(< \aleph_0)$-witness property for singletons.
\end{cor}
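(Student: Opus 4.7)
The plan is to combine the strengthened finite local character proved in the previous lemma with Proposition \ref{close} to upgrade the pointwise independence hypothesis into the global one. Fix $M \lea N$ and $b \in \ce$ with $b \dnfb_M A$ for every finite $A \subseteq N$; the goal is to conclude $b \dnfb_M N$ (the other direction is immediate from monotonicity).

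First, I would apply the previous lemma to the type $\gtp(b/N)$: since $\dnfb$ is supersimple it has finite local character, so there is a finite $D \subseteq N$ with $b \dnfb_{cl(D)} N$. Here $cl(D)$ is a genuine submodel of $\ce$ because $\K$ admits intersections, and since $D \subseteq N$ one has $cl(D) \lea N$ by Fact \ref{easy-f}.

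Next, I would use the hypothesis applied to the finite set $D$ to obtain $b \dnfb_M D$, and invoke Proposition \ref{close} to produce a model $M^* \in \K$ with $M \cup D \subseteq M^*$ and $b \dnfb_M M^*$. Because $M^*$ is a submodel of $\ce$ containing $M \cup D$, the intersection definition of the closure forces $cl(M \cup D) \lea M^*$, and so monotonicity of $\dnfb$ yields $b \dnfb_M cl(M \cup D)$.

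Finally, by Fact \ref{easy-f} one has $cl(D) \lea cl(M \cup D) \lea N$, so base monotonicity promotes $b \dnfb_{cl(D)} N$ to $b \dnfb_{cl(M \cup D)} N$. Since $M \lea cl(M \cup D)$ by coherence, transitivity combined with $b \dnfb_M cl(M \cup D)$ then gives $b \dnfb_M N$, as desired. The conceptual heart of the argument, and the step I expect to be the main one, is the use of Proposition \ref{close}: it is what lets me turn the set-theoretic hypothesis $b \dnfb_M D$ into the model-theoretic statement $b \dnfb_M cl(M \cup D)$ that is compatible with the submodel $cl(D)$ handed back by the strengthened finite local character, after which the remaining steps are standard manipulations with the independence axioms.
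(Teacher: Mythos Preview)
Your proof is correct and follows essentially the same approach as the paper's own argument: apply the previous lemma to obtain a finite $D\subseteq N$ with $b\dnfb_{cl(D)}N$, use the hypothesis together with Proposition~\ref{close} and the closure properties to obtain $b\dnfb_M cl(M\cup D)$, apply base monotonicity to get $b\dnfb_{cl(M\cup D)}N$, and finish with transitivity. The paper's proof is terser (it compresses the passage through the auxiliary model $M^*$ into the phrase ``by normality, monotonicity and Proposition~\ref{close}''), but the structure and all key ingredients are identical.
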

\begin{proof}
Let $M \lea N$ and $a \in \ce$ such that $a \dnfb_M B$ for every $B \subseteq_{\text{fin}} N$.

By the previous lemma there is $D  \subseteq_{\text{fin}} N$ such that $a \dnfb_{cl(D)} N$, then by base monotonicity $a \dnfb_{cl(DM)} N$. On the other hand, by hypothesis $a \dnfb_M D$, then by normality, monotonicity and Proposition \ref{close} it follows that $a \dnfb_M  cl(DM)$. Therefore,  applying transitivity to $a \dnfb_M  cl(DM)$ and  $a \dnfb_{cl(DM)} N$ we obtain that $a \dnfb_M N$.
\end{proof}
 
\section{Future work}

In \cite[4.2]{kipi} it is shown that if a complete first-order theory is simple, then there is a canonical independence relation satisfying the type-amalgamation property. In \cite{bgkv} it is shown that stable independence relations are canonical.  So it is natural to ask if the same holds true for simple and supersimple independence relations.

\begin{question}\label{q-cano}
If $\K$ has $\dnfb$ a simple or supersimple independence relation, is $\dnfb$ canonical?
\end{question}

\begin{remark}\label{kams} Theorem 1.1 of \cite{kam} gives a positive answer to the above question under the assumptions that $\dnfb$ has the $(<\aleph_0)$-witness property.

\end{remark}

It is known that for a complete first-order theory $T$, $T$ is simple if and only if $T$ does not have the tree property (see for example \cite[3.10]{gil}). In Sections 5 and 6 we showed some instances of the forward direction for simple independence relations (Corollary \ref{n2-tp} and Corollary \ref{bound_cond3}). So we ask the following:

\begin{question}
 If $\K$ does not have the tree property, does $\K$ have $\dnfb$ a simple independence relation?
\end{question}

Another notion that we studied in this paper is that of the witness property for independence relations. This seems to be a very strong hypothesis that can be taken for granted in first-order theories as forking has finite character. Regarding it we ask:
\begin{question}
Can Fact \ref{lc3} be extended to simple independence relations? More precisely, if $\K$ is fully $(<\theta)$-tame and -type-short and $\dnfb$ is a simple independence relation, does $\dnfb$ have the $(<\theta)$-strong witness property?
\end{question}

A related question is the following:

\begin{question}
Is Corollary \ref{lc4} true for all AECs with a monster model?  
\end{question}

Moreover, we used the witness properties a few times in this paper, see for example Lemma \ref{bound_cond} and Theorem \ref{equivsup}. An interesting question would be if the use of the witness property is necessary in those arguments where we use it.

In \cite[8.16]{lrv} it is shown that the existence of a stable independence relation implies that the AEC is tame. We extended this result for almost-stable independence relations in Lemma \ref{star-tame}, so a natural question to ask is:

\begin{question}
If $\K$ has $\dnfb$ a simple or supersimple independence relation, is $\K$ tame?
\end{question}

Finally, as it was mentioned in the introduction, we think that it is premature to focus on applications. Nevertheless, we acknowledge the importance of \emph{good examples}. Below is a list of the type of examples that we are interested in. 

\begin{question}\
\begin{itemize}
\item Find an example of a simple independence relation that is not a stable independence relation in an AEC that is not fully $(<\aleph_0)$-tame and type-short.
\item Find an example of a supersimple independence relation that is not a stable independence relation in an AEC that is not fully $(<\aleph_0)$-tame and type-short.
\item Find an example of a strictly simple independence relation without the $(<\aleph_0)$-witness property.
\item Find an example of a strictly simple independence relation without the witness property. 
\item Find an example of a strictly simple independence relation without the witness property in an AEC that is fully $(<\aleph_0)$-tame and type-short.
\end{itemize}
\end{question}


\end{document}